\newtheorem{theorem}{Theorem}[section]
\newtheorem{lemma}{Lemma}[section]
\newtheorem{corollary}{Corollary}[section]
\theoremstyle{definition}
\newtheorem{definition}{Definition}[section]
\newtheorem{remark}{Remark}[section]
\newtheorem{example}{Example}[section]
\newtheorem{conjecture}{Conjecture}[section]
\numberwithin{equation}{section}
\numberwithin{figure}{section}
\numberwithin{table}{section}
\renewcommand{\epsilon}{\varepsilon}
\renewcommand{\i}{{\rm i}}
\newcommand{\e}{{\rm e}}
\renewcommand{\aa}{{\boldsymbol a}}
\newcommand{\bb}{{\boldsymbol b}}
\newcommand{\ii}{{\boldsymbol i}}
\newcommand{\jj}{{\boldsymbol j}}
\newcommand{\kk}{{\boldsymbol k}}
\newcommand{\nn}{{\boldsymbol n}}
\newcommand{\hh}{{\boldsymbol h}}
\newcommand{\xx}{{\boldsymbol x}}
\newcommand{\yy}{{\boldsymbol y}}
\newcommand{\bu}{{\mathbf 1}}
\newcommand{\II}{{\mathcal I}}
\newcommand{\rd}{\color{red}}\newcommand{\bl}{\color{blue}}\newcommand{\bk}{\color{black}}
\begin{document}

\title{\Large From asymptotic distribution and vague convergence to uniform convergence, with numerical applications}

\author{Giovanni Barbarino\\
\footnotesize Department of Mathematics and Operations Research, University of Mons, Belgium (giovanni.barbarino@umons.ac.be)\\[10pt]
Sven-Erik Ekstr\"om\\
\footnotesize Division of Scientific Computing, Department of Information Technology, Uppsala University, Sweden (sven-erik.ekstrom@it.uu.se)\\[10pt]
Carlo Garoni\\
\footnotesize Department of Mathematics, University of Rome Tor Vergata, Italy (garoni@mat.uniroma2.it)\\[10pt]
David Meadon\\
\footnotesize Division of Scientific Computing, Department of Information Technology, Uppsala University, Sweden (david.meadon@it.uu.se)\\[10pt]
Stefano Serra-Capizzano\thanks{Corresponding author (correspondence to: s.serracapizzano@uninsubria.it)}\\
\footnotesize Department of Science and High Technology, University of Insubria, Italy (s.serracapizzano@uninsubria.it)\\[-2pt]
\footnotesize Division of Scientific Computing, Department of Information Technology, Uppsala University, Sweden (stefano.serra@it.uu.se)\\[10pt]
Paris Vassalos\\
\footnotesize Department of Informatics, Athens University of Economics and Business, Greece (pvassal@aueb.gr)}
\date{}

\maketitle

\begin{abstract}
Let $\{\Lambda_n=\{\lambda_{1,n},\ldots,\lambda_{d_n,n}\}\}_n$ be a sequence of finite multisets of real numbers such that $d_n\to\infty$ as $n\to\infty$, and let $f:\Omega\subset\mathbb R^d\to\mathbb R$ be a Lebesgue measurable function defined on a domain $\Omega$ with $0<\mu_d(\Omega)<\infty$, where $\mu_d$ is the Lebesgue measure in $\mathbb R^d$.
We say that $\{\Lambda_n\}_n$ has an asymptotic distribution described by $f$, and we write $\{\Lambda_n\}_n\sim f$, if
\begin{equation}\label{lr}
\lim_{n\to\infty}\frac1{d_n}\sum_{i=1}^{d_n}F(\lambda_{i,n})=\frac1{\mu_d(\Omega)}\int_\Omega F(f(\xx)){\rm d}\xx\tag{$*$}
\end{equation}
for every continuous function $F$ with bounded support.
If $\Lambda_n$ is the spectrum of a matrix $A_n$, we say that $\{A_n\}_n$ has an asymptotic spectral distribution described by $f$ and we write $\{A_n\}_n\sim_\lambda f$.
In the case where $d=1$, $\Omega$~is a bounded interval, $\Lambda_n\subseteq f(\Omega)$ for all $n$, and $f$ satisfies suitable conditions, Bogoya, B\"ottcher, Grudsky, and Maximenko proved that the asymptotic distribution \eqref{lr} implies the uniform convergence to $0$ of the difference between the properly sorted vector $[\lambda_{1,n},\ldots,\lambda_{d_n,n}]$ and the vector of samples $[f(x_{1,n}),\ldots,f(x_{d_n,n})]$, i.e.,
\begin{equation}\label{uc}
\lim_{n\to\infty}\,\max_{i=1,\ldots,d_n}|f(x_{i,n})-\lambda_{\tau_n(i),n}|=0, \tag{$**$}
\end{equation}
where $x_{1,n},\ldots,x_{d_n,n}$ is a uniform grid in $\Omega$ and $\tau_n$ is the sorting permutation.
We extend this result to the case where $d\ge1$ and $\Omega$ is a Peano--Jordan measurable set (i.e., a bounded set with $\mu_d(\partial\Omega)=0$).
We also formulate and prove a uniform convergence result analogous to \eqref{uc} in the more general case where the function $f$ takes values in the space of $k\times k$ matrices. Our derivations are based on the concept of monotone rearrangement (quantile function) as well as on matrix analysis arguments stemming from the theory of generalized locally Toeplitz sequences and the observation that any finite multiset of numbers $\Lambda_n=\{\lambda_{1,n},\ldots,\lambda_{d_n,n}\}$ can always be interpreted as the spectrum of a matrix $A_n={\rm diag}(\lambda_{1,n},\ldots,\lambda_{d_n,n})$.
The theoretical results are illustrated through numerical experiments, and a reinterpretation of them in terms of vague convergence of probability measures is hinted.

\smallskip

\noindent{\em Keywords:} asymptotic (spectral) distribution, uniform convergence, vague convergence of probability measures, Toeplitz matrices, monotone rearrangement (quantile function)

\smallskip

\noindent{\em 2010 MSC:} 47B06, 15A18, 60B10, 15B05
\end{abstract}

\section{Introduction}

Throughout this paper, a matrix-sequence is a sequence of the form $\{A_n\}_n$, where $A_n$ is a square matrix such that ${\rm size}(A_n)=d_n\to\infty$ as $n\to\infty$.
Matrix-sequences arise in several contexts. 
For example, when a linear differential equation is discretized by a linear numerical method, such as the finite difference method, the finite element method, the isogeometric analysis, etc., the actual computation of the numerical solution reduces to solving a linear system $A_n\mathbf u_n=\mathbf f_n$. The size $d_n$ of this system diverges to $\infty$ as the mesh-fineness parameter $n$ tends to $\infty$, and we are therefore in the presence of a matrix-sequence $\{A_n\}_n$.
It is often observed in practice that $\{A_n\}_n$ belongs to the class of generalized locally Toeplitz (GLT) sequences and it therefore enjoys an asymptotic singular value and/or eigenvalue distribution. 
We refer the reader to the books \cite{GLTbookI,GLTbookII} and the papers \cite{barbarinoREDUCED,rGLT,GLTbookIII,GLTbookIV} for a comprehensive treatment of GLT sequences and to \cite{SbMath} for a more concise introduction to the subject.
Another noteworthy example of matrix-sequences concerns the finite sections of an infinite Toeplitz matrix. An infinite (block) Toeplitz matrix is a matrix of the form
\begin{equation}\label{iTm}
[f_{i-j}]_{i,j=1}^\infty = \begin{bmatrix} 
f_0 & f_{-1} & f_{-2} & \cdots & \cdots & \cdots \\[5pt]
f_1 & f_0 & f_{-1} & f_{-2} & \cdots & \cdots\\[5pt]
f_2 & f_1 & f_0 & f_{-1} & f_{-2} & \cdots \\
\vdots & f_2 & f_1 & \ddots & \ddots & \ddots \\
\vdots & \vdots & f_2 & \ddots & \ddots & \ddots \\
\vdots & \vdots & \vdots & \ddots & \ddots & \ddots 
\end{bmatrix},
\end{equation}
where the entries $\ldots,f_{-2},f_{-1},f_0,f_1,f_2,\ldots$ are $k\times k$ matrices (blocks) for some $k\ge1$. If $k=1$, then \eqref{iTm} is a classical (scalar) Toeplitz matrix. 
The $n$th section of \eqref{iTm} is the matrix defined by
\begin{equation}\label{A_nT}
A_n=[f_{i-j}]_{i,j=1}^n.
\end{equation}
A case of special interest arises when the entries $f_k$ are the Fourier coefficients of a function $f:[-\pi,\pi]\to\mathbb C^{k\times k}$ with components $f_{ij}\in L^1([-\pi,\pi])$, i.e., 
\[ f_k=\frac1{2\pi}\int_{-\pi}^\pi f(\theta)\e^{-\i k\theta}{\rm d}\theta,\qquad k\in\mathbb Z, \]
where the integrals are computed componentwise. In this case, the matrix $A_n$ in \eqref{A_nT} is denoted by $T_n(f)$ and is referred to as the $n$th (block) Toeplitz matrix generated by $f$.
The asymptotic singular value and eigenvalue distributions of the Toeplitz sequence $\{T_n(f)\}_n$ have been deeply investigated over time, starting from Szeg\H o's first limit theorem \cite{GS} and the Avram--Parter theorem \cite{Avram,Parter}, up to the works by Tyrtyshnikov--Zamarashkin \cite{Ty96,ZT,ZT'} and Tilli \cite{TilliL1,Tilli-complex}. For more on this subject, see \cite[Chapter~5]{BoSi} and \cite[Chapter~6]{GLTbookI}.

An important concept related to matrix-sequences is the notion of asymptotic spectral (or eigenvalue) distribution. 
After the publication of Tyrtyshnikov's paper in 1996 \cite{Ty96}, there has been an ever growing interest in this topic, which led, among others, to the birth of GLT sequences \cite{glt1,glt2,Tilli98}.
The reasons behind this widespread interest are not purely academic, because the asymptotic spectral distribution has significant practical implications.
For example, suppose that $\{A_n\}_n$ is a matrix-sequence resulting from the discretization of a differential equation $\mathscr Au=f$ through a given numerical method.
Then, the asymptotic spectral distribution of $\{A_n\}_n$ can be used to measure the accuracy of the method in approximating the spectrum of the differential operator $\mathscr A$ \cite{DavideCalcolo2021}, to establish whether the method preserves the so-called average spectral gap 
\cite{DavideCalcolo2018}, or to formulate analytical predictions for the eigenvalues of both $A_n$ and $\mathscr A$ \cite{Tom-paper}.
Moreover, the asymptotic spectral distribution of $\{A_n\}_n$ can be exploited to design efficient iterative solvers for linear systems with matrix $A_n$ and to analyze/predict their performance; see \cite{BeKu,Kuij-SIREV} for accurate convergence estimates of Krylov methods based on the asymptotic spectral distribution 
and \cite[p.~3]{GLTbookI} for more details on this subject.

Before proceeding further, let us introduce the formal definition of asymptotic singular value and eigenvalue distribution.
Let $C_c(\mathbb R)$ (resp., $C_c(\mathbb C)$) be the space of continuous complex-valued functions with bounded support defined on $\mathbb R$ (resp., $\mathbb C$).
If $A\in\mathbb C^{m\times m}$, the singular values and eigenvalues of $A$ are denoted by $\sigma_1(A),\ldots,\sigma_m(A)$ and $\lambda_1(A),\ldots,\lambda_m(A)$, respectively. If $\lambda_1(A),\ldots,\lambda_m(A)$ are real, their maximum and minimum are also denoted by $\lambda_{\max}(A)$ and $\lambda_{\min}(A)$.
We denote by $\mu_d$ the Lebesgue measure in $\mathbb R^d$. Throughout this paper, unless otherwise specified, ``measurable'' means ``Lebesgue measurable'' and ``a.e.''\ means ``almost everywhere (with respect to the Lebesgue measure)''. 
A matrix-valued function $f:\Omega\subseteq\mathbb R^d\to\mathbb C^{k\times k}$ is said to be measurable (resp., bounded, continuous, continuous a.e., in $L^p(\Omega)$, etc.)\ if its components $f_{ij}:\Omega\to\mathbb C$, $i,j=1,\ldots,k$, are measurable (resp., bounded, continuous, continuous a.e., in $L^p(\Omega)$, etc.).

\begin{definition}[\textbf{asymptotic singular value and eigenvalue distribution of a matrix-sequence}]\label{dd}
Let $\{A_n\}_n$ be a matrix-sequence with $A_n$ of size $d_n$, and let $f:\Omega\subset\mathbb R^d\to\mathbb C^{k\times k}$ be measurable with $0<\mu_d(\Omega)<\infty$.
\begin{itemize}[nolistsep,leftmargin=*]
	\item We say that $\{A_n\}_n$ has an asymptotic eigenvalue (or spectral) distribution described by $f$ if
	\begin{equation}\label{sd}
	\lim_{n\to\infty}\frac1{d_n}\sum_{i=1}^{d_n}F(\lambda_i(A_n))=\frac1{\mu_d(\Omega)}\int_\Omega\frac{\sum_{i=1}^kF(\lambda_i(f(\xx)))}{k}\mathrm{d}\xx,\qquad\forall\,F\in C_c(\mathbb C).
	\end{equation}
	In this case, $f$ is called the eigenvalue (or spectral) symbol of $\{A_n\}_n$ and we write $\{A_n\}_n\sim_\lambda f$.
	\item We say that $\{A_n\}_n$ has an asymptotic singular value distribution described by $f$ if
	\begin{equation}\label{svd}
	\lim_{n\to\infty}\frac1{d_n}\sum_{i=1}^{d_n}F(\sigma_i(A_n))=\frac1{\mu_d(\Omega)}\int_\Omega\frac{\sum_{i=1}^kF(\sigma_i(f(\xx)))}{k}\mathrm{d}\xx,\qquad\forall\,F\in C_c(\mathbb R).
	\end{equation}
	In this case, $f$ is called the singular value symbol of $\{A_n\}_n$ and we write $\{A_n\}_n\sim_\sigma f$.
\end{itemize}
\end{definition}

We remark that Definition~\ref{dd} is well-posed as the functions $\xx\mapsto\sum_{i=1}^kF(\lambda_i(f(\xx)))$ and $\xx\mapsto\sum_{i=1}^kF(\sigma_i(f(\xx)))$ appearing in \eqref{sd}--\eqref{svd} are measurable \cite[Lemma~2.1]{GLTbookIII}. Throughout this paper, whenever we write a relation such as $\{A_n\}_n\sim_\lambda f$ or $\{A_n\}_n\sim_\sigma f$, it is understood that $\{A_n\}_n$ and $f$ are as in Definition~\ref{dd}, i.e., $\{A_n\}_n$ is a matrix-sequence and $f$ is a measurable function taking values in $\mathbb C^{k\times k}$ for some $k$ and defined on a subset $\Omega$ of some $\mathbb R^d$ with $0<\mu_d(\Omega)<\infty$. 
Since any finite multiset of numbers can always be interpreted as the spectrum of a matrix, a byproduct of Definition~\ref{dd} is the following definition. 

\begin{definition}[\textbf{asymptotic distribution of a sequence of finite multisets of numbers}]\label{adn}
Let $\{\Lambda_n=\{\lambda_{1,n},\ldots,\lambda_{d_n,n}\}\}_n$ be a sequence of finite multisets of numbers such that $d_n\to\infty$ as $n\to\infty$, and let $f$ be as in Definition~\ref{dd}. 
We say that $\{\Lambda_n\}_n$ has an asymptotic distribution described by $f$, and we write $\{\Lambda_n\}_n\sim f$, if $\{A_n\}_n\sim_\lambda f$, where $A_n$ is any matrix whose spectrum equals $\Lambda_n$ (e.g., $A_n={\rm diag}(\lambda_{1,n},\ldots,\lambda_{d_n,n})$). 
\end{definition}

In the previous  
literature, it has often been claimed that the informal meaning behind the asymptotic spectral distribution \eqref{sd} is the following \cite[Remark~2.9]{GLTbookIII}: assuming that 
there exist $k$ a.e.\ continuous functions $\lambda_1(f),\ldots,\lambda_k(f):\Omega\to\mathbb C$ such that $\lambda_1(f(\xx)),\ldots,\lambda_k(f(\xx))$ are the eigenvalues of $f(\xx)$ for every $\xx\in\Omega$, the eigenvalues of $A_n$, except possibly for $o(d_n)$ outliers, can be subdivided into $k$ different subsets of approximately the same cardinality, and, for $n$ large enough, the eigenvalues belonging to the $i$th subset are approximately equal to the samples of $\lambda_i(f)$ over a uniform grid in the domain $\Omega$.
For instance, if $d=1$, $d_n=nk$, and $\Omega=[a,b]$, then, assuming we have no outliers, the eigenvalues of $A_n$ are approximately equal to
\[ \lambda_i\Bigl(f\Bigl(a+j\frac{b-a}n\Bigr)\Bigr),\qquad j=1,\ldots,n,\qquad i=1,\ldots,k, \]
for $n$ large enough; similarly, if $d=2$, $d_n=n^2k$, and $\Omega=[a_1,b_1]\times [a_2,b_2]$, then, assuming we have no outliers, the eigenvalues of $A_n$ are approximately equal to
\[ \lambda_i\Bigl(f\Bigl(a_1+j_1\frac{b_1-a_1}{n},a_2+j_2\frac{b_2-a_2}{n}\Bigr)\Bigr),\qquad j_1,j_2=1,\ldots,n,\qquad i=1,\ldots,k, \]
for $n$ large enough; and so on for $d\ge3$.
In the case where $d=k=1$, $\Omega=[a,b]$ is a bounded interval, $\{\lambda_1(A_n),\ldots,\lambda_{d_n}(A_n)\}\subseteq f(\Omega)$ for all $n$, and $f$ is a real function satisfying suitable conditions, a precise mathematical formulation and proof of the previous informal meaning was given by Bogoya, B\"ottcher, Grudsky, and Maximenko first in the Toeplitz case $A_n=T_n(f)$ \cite[Theorem~1.5]{maximum_norm} and then in the case of an arbitrary $A_n$ \cite[Theorem~1.3]{Bottcher-ded-Grudsky}. In a nutshell, they proved the uniform convergence to $0$ of the difference $\boldsymbol{\lambda}(A_n)-f(\xx_n)$, i.e.,
\begin{equation}\label{uci}
\lim_{n\to\infty}\|f(\xx_n)-\boldsymbol\lambda(A_n)\|_\infty=0,
\end{equation}
where $\|\cdot\|_\infty$ is the usual $\infty$-norm of vectors, $\boldsymbol\lambda(A_n)$ is the properly sorted vector of eigenvalues of $A_n$, $f(\xx_n)$ is the vector of samples $[f(x_{1,n}),\ldots,f(x_{d_n,n})]$, and $\xx_n=[x_{1,n},\ldots,x_{d_n,n}]$ is a uniform grid in $\Omega$.

In this paper, using the concept of monotone rearrangement (quantile function) and matrix analysis arguments from the theory of GLT sequences, we provide deeper insights into the notion of asymptotic spectral distribution by presenting precise mathematical formulations and proofs of the informal meaning behind \eqref{sd} that are more general than \cite[Theorem~1.3]{Bottcher-ded-Grudsky}. Our formulations are made in terms of uniform convergence to $0$ of differences of vectors, in complete analogy with \eqref{uci}.
\begin{itemize}[leftmargin=*,nolistsep]
	\item In our first main result (Theorem~\ref{th:main4-n}), we extend \cite[Theorem~1.3]{Bottcher-ded-Grudsky} by formulating 
	and proving the informal meaning behind \eqref{sd} in the case where $d\ge1$ and the domain $\Omega$ of the spectral symbol $f$ is a Peano--Jordan measurable set (i.e., a bounded set with $\mu_d(\partial\Omega)=0$).
	In Corollary~\ref{eccoGi-n}, we prove (a slightly more general version of) \cite[Theorem~1.3]{Bottcher-ded-Grudsky} as a corollary of Theorem~\ref{th:main4-n}. 
	\item In our second main result (Theorem~\ref{thm:mmd}), we prove for $d=1$ that Theorem~\ref{th:main4-n} can be strengthened in the case where the spectrum of $A_n$ is contained in the image of $f$ for every $n$ and $f$ satisfies some mild assumptions. More precisely, in this case we prove that the eigenvalues of $A_n$ are exact samples of $f$ over an asymptotically uniform grid (see Section~\ref{main} for the corresponding definition).
	\item In our last main result (Theorem~\ref{eccoGi'}), we extend \cite[Theorem~1.3]{Bottcher-ded-Grudsky} by formulating  
	and proving the informal meaning behind \eqref{sd} in the case where $k\ge1$, i.e., the spectral symbol $f$ is a matrix-valued function.
\end{itemize}
The results herein, including the main Theorems~\ref{th:main4-n}--\ref{eccoGi'}, are actually formulated in terms of asymptotic distributions of sequences of finite multisets of numbers (Definition~\ref{adn}) and not in terms of asymptotic spectral distributions (Definition~\ref{dd}). This is done to allow for a better comparison with the previous literature and especially with \cite{Bottcher-ded-Grudsky}. Reinterpreting the results in terms of asymptotic spectral distributions is a straightforward rephrasing exercise that is left to the reader.

The paper is organized as follows. 
In Section~\ref{main}, we formulate the main results. In Section~\ref{proofs}, we prove the main results. In Section~\ref{numexp}, we illustrate the main results through numerical experiments.  
In Section~\ref{conc}, we draw conclusions and we also highlight the relation existing between the asymptotic  
distribution and the vague convergence of probability measures (a relation that allows for a reinterpretation of the main results of this paper in a probabilistic perspective).

\section{Main results}\label{main}

\subsection{Notation and terminology}
Throughout this paper, the cardinality, the interior, the closure, and the characteristic (indicator) function of a set $E$ are denoted by $|E|$, $\accentset{\circ}E$, $\overline E$, and $\chi_E$, respectively.
We use ``increasing'' 
as a synonym of ``non-decreasing''. 
We use ``strictly increasing''  
whenever we want to specify that the increase 
is strict. Similarly, we use ``decreasing'' as a synonym of ``non-increasing'' and ``strictly decreasing'' whenever we want to specify that the decrease is strict. 
The word ``monotone'' means either ``increasing'' or ``decreasing'', while ``strictly monotone'' means either ``strictly increasing'' or ``strictly decreasing''.
If $z\in\mathbb C$ and $\epsilon>0$, we denote by $D(z,\epsilon)$ the open disk with center $z$ and radius $\epsilon$, i.e., $D(z,\epsilon)=\{w\in\mathbb C:|w-z|<\epsilon\}$.
If $S\subseteq\mathbb C$ and $\epsilon>0$, we denote by $S_\epsilon=\bigcup_{z\in S}D(z,\epsilon)$ the $\epsilon$-expansion of $S$, i.e., the set of points whose distance from $S$ is smaller than $\epsilon$.
We use a notation borrowed from probability theory to indicate sets. For example, if $f,g:\Omega\subseteq\mathbb R^d\to\mathbb R$, then $\{f\le1\}=\{\xx\in\Omega:f(\xx)\le1\}$, $\mu_d\{f>0,\:g<0\}$ is the measure of the set $\{\xx\in\Omega:f(\xx)>0,\:g(\xx)<0\}$, etc.

\medskip

\noindent\textbf{Multi-index notation.}
A multi-index $\ii$ of size $d$, also called a $d$-index, is a vector in $\mathbb Z^d$. 
$\mathbf0$ and $\bu$ are the vectors of all zeros and all ones, respectively (their size will be clear from the context). For any vector $\nn\in\mathbb R^d$, we set $N(\nn)=\prod_{j=1}^dn_j$ and we write $\nn\to\infty$ to indicate that $\min(\nn)\to\infty$. 
If $\hh,\kk\in\mathbb R^d$, an inequality such as $\hh\le\kk$ means that $h_j\le k_j$ for all $j=1,\ldots,d$.
If $\hh,\kk$ are $d$-indices such that $\hh\le\kk$, the $d$-index range $\{\hh,\ldots,\kk\}$ is the set $\{\ii\in\mathbb Z^d:\hh\le\ii\le\kk\}$. We assume for this set the standard lexicographic ordering:
\[ \Bigl[\ \ldots\ \bigl[\ [\ (i_1,\ldots,i_d)\ ]_{i_d=h_d,\ldots,k_d}\ \bigr]_{i_{d-1}=h_{d-1},\ldots,k_{d-1}}\ \ldots\ \Bigr]_{i_1=h_1,\ldots,k_1}. \]
For instance, in the case $d=2$ the ordering is
\begin{align*}
&(h_1,h_2),\,(h_1,h_2+1),\,\ldots,\,(h_1,k_2),\,(h_1+1,h_2),\,(h_1+1,h_2+1),\,\ldots,\,(h_1+1,k_2),\\
&\ldots\,\ldots\,\ldots,\,(k_1,h_2),\,(k_1,h_2+1),\,\ldots,\,(k_1,k_2).
\end{align*}
When a $d$-index $\ii$ varies in a finite set $\mathcal I\subset\mathbb Z^d$ (this is simply written as $\ii\in\mathcal I$), it is always understood that $\ii$ follows the lexicographic ordering.
For instance, if we write $\xx=[x_\ii]_{\ii\in\II}$, then $\xx$ is a vector of size $|\II|$ whose components are indexed by the $d$-index $\ii$ varying in $\II$ according to the lexicographic ordering. Similarly, if we write $X=[x_{\ii\jj}]_{\ii,\jj\in\II}$, then $X$ is a square matrix of size $|\II|$ whose components are indexed by a pair of $d$-indices $\ii,\jj$, both varying in $\II$ according to the lexicographic ordering.
When $\mathcal I$ is a $d$-index range $\{\hh,\ldots,\kk\}$, the notation $\ii\in\mathcal I$ is often replaced by $\ii=\hh,\ldots,\kk$. 
Operations involving $d$-indices (or general vectors with $d$ components) that have no meaning in the vector space $\mathbb R^d$ must always be interpreted in the componentwise sense. For instance, $\boldsymbol j\hh=(j_1h_1,\ldots,j_dh_d)$, $\ii/\boldsymbol j=(i_1/j_1,\ldots,i_d/j_d)$, etc.
If $\aa,\bb\in\mathbb R^d$ and $\aa\le\bb$, we denote by $[\aa,\bb]$ the closed $d$-dimensional rectangle given by $[a_1,b_1]\times\cdots\times[a_d,b_d]$.

\medskip

\noindent\textbf{Essential range.}
Given a measurable function $f:\Omega\subseteq\mathbb R^d\to\mathbb C$, the essential range of $f$ is denoted by $\mathcal{ER}(f)$. We recall that $\mathcal{ER}(f)$ is defined as 
\[ \mathcal{ER}(f)=\{z\in\mathbb C:\hspace{0.5pt}\mu_d\{f\in D(z,\epsilon)\}>0\hspace{0.5pt}\mbox{ for all }\hspace{0.5pt}\epsilon>0\}. \]
It is clear that $\mathcal{ER}(f)\subseteq\overline{f(\Omega)}$.
Moreover, $\mathcal{ER}(f)$ is closed and $f\in\mathcal{ER}(f)$ a.e.; see, e.g., \cite[Lemma~2.1]{GLTbookI}.
If $f$ is real then $\mathcal{ER}(f)$ is a subset of $\mathbb R$. In this case, we define the essential infimum (resp., supremum) of $f$ on $\Omega$ as the infimum (resp., supremum) of $\mathcal{ER}(f)$:
\[ \mathop{\rm ess\,inf}_\Omega f=\inf\mathcal{ER}(f),\qquad\mathop{\rm ess\,sup}_\Omega f=\sup\mathcal{ER}(f). \]

\medskip

\noindent\textbf{Asymptotically uniform grids.}
Let $[\aa,\bb]$ be a closed $d$-dimensional rectangle and let $\{\nn=\nn(n)\}_n$ be a sequence of $d$-indices in $\mathbb N^d$ such that $\nn\to\infty$ as $n\to\infty$.
For every $n$, let $\mathcal G_\nn^{(n)}=\{\xx_{\ii,\nn}^{(n)}\}_{\ii=\bu,\ldots,\nn}$ be a sequence of $N(\nn)$ points in $\mathbb R^d$. We say that $\mathcal G_\nn^{(n)}$ is an asymptotically uniform (a.u.)\ grid in $[\aa,\bb]$ if
\[ \lim_{n\to\infty}m(\mathcal G_\nn^{(n)})=0, \]
where
\[ m(\mathcal G_\nn^{(n)})=\max_{\ii=\bu,\ldots,\nn}\left\|\xx_{\ii,\nn}^{(n)}-\Bigl(\aa+\ii\,\frac{\bb-\aa}{\nn}\Bigr)\right\|_\infty \]
is referred to as the distance of $\mathcal G_\nn^{(n)}$ from the uniform grid $\{\aa+\ii(\bb-\aa)/\nn\}_{\ii=\bu,\ldots,\nn}$.  
Note that $\mathcal G_\nn^{(n)}$ needs not to be contained in $[\aa,\bb]$ in order to be an a.u.\ grid in $[\aa,\bb]$. Note also that the notation $\nn$ is used instead of $\nn(n)$ for simplicity, but it is understood that $\nn=\nn(n)$ depends on $n$.

\medskip

\noindent\textbf{Regular sets.}
We say that $\Omega\subset\mathbb R^d$ is a regular set if it is bounded and $\mu_d(\partial\Omega)=0$.
Note that the condition ``$\mu_d(\partial\Omega)=0$'' is equivalent to ``$\chi_\Omega$ is continuous a.e.\ on $\mathbb R^d$''. Any regular set $\Omega\subset\mathbb R^d$ is measurable and we have $\mu_d(\Omega)=\mu_d(\accentset{\circ}\Omega)=\mu_d(\overline\Omega)<\infty$. 
In Riemann integration theory, a regular set is simply a Peano--Jordan measurable set.

\subsection{Statements of the main results}
Theorem~\ref{th:main4-n} is our first main result.
It is a generalization to the multidimensional case of a previous result due to Bogoya, B\"ottcher, Grudsky, and Maximenko \cite[Theorem~1.3]{Bottcher-ded-Grudsky}.

\begin{theorem}\label{th:main4-n}
Let $f:\Omega\subset\mathbb R^d\to\mathbb R$ be bounded and continuous a.e.\ on the regular set $\Omega$ with $\mu_d(\Omega)>0$ and $\mathcal{ER}(f)=[\inf_\Omega f,\sup_\Omega f]$. 
Take any $d$-dimensional rectangle $[\aa,\bb]$ containing $\Omega$ and any a.u.\ grid $\mathcal G_\nn^{(n)}=\{\xx_{\ii,\nn}^{(n)}\}_{\ii=\bu,\ldots,\nn}$ in $[\aa,\bb]$, where $\nn=\nn(n)\in\mathbb N^d$ and $\nn\to\infty$ as $n\to\infty$.
For every $n$, define $\II_\nn^{(n)}(\Omega)=\{\ii\in\{\bu,\ldots,\nn\}:\xx_{\ii,\nn}^{(n)}\in\Omega\}$ and consider the multiset of samples $\{f(\xx_{\ii,\nn}^{(n)}):\ii\in\II_\nn^{(n)}(\Omega)\}$ and a multiset of $|\II_\nn^{(n)}(\Omega)|$  
real numbers $\{\lambda_{\ii,\nn}^{(n)}:\ii\in\II_\nn^{(n)}(\Omega)\}$ with the following properties:
\begin{itemize}[nolistsep,leftmargin=*]
	\item $\{\{\lambda_{\ii,\nn}^{(n)}:\ii\in\II_\nn^{(n)}(\Omega)\}\}_n\sim f$; 
	\item $\{\lambda_{\ii,\nn}^{(n)}:\ii\in\II_\nn^{(n)}(\Omega)\}\subseteq[\inf_\Omega f-\epsilon_n,\sup_\Omega f+\epsilon_n]$ for every $n$ and for some $\epsilon_n\to0$ as $n\to\infty$.
\end{itemize}
Then, if  
$\sigma_n$ and $\tau_n$ are two permutations of $\II_\nn^{(n)}(\Omega)$ such that the vectors $[f(\xx_{\sigma_n(\ii),\nn}^{(n)})]_{\ii\in\II_\nn^{(n)}(\Omega)}$ and $[\lambda_{\tau_n(\ii),\nn}^{(n)}]_{\ii\in\II_\nn^{(n)}(\Omega)}$ are sorted in increasing order, we have
\[ \max_{\ii\in\II_\nn^{(n)}(\Omega)}|f(\xx_{\sigma_n(\ii),\nn}^{(n)})-\lambda_{\tau_n(\ii),\nn}^{(n)}|\to0\ \,\mbox{as}\,\ n\to\infty. \]
In particular,
\[ \min_{\tau}\max_{\ii\in\II_\nn^{(n)}(\Omega)}|f(\xx_{\ii,\nn}^{(n)})-\lambda_{\tau(\ii),\nn}^{(n)}|\to0\ \,\mbox{as}\ \,n\to\infty, \]
where the minimum is taken over all permutations $\tau$ of $\II_\nn^{(n)}(\Omega)$.
\end{theorem}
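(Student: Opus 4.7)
The plan is to show that both sorted vectors in question converge uniformly, at the quantile level, to the \emph{monotone rearrangement} (quantile function) of $f$, and then to conclude by the triangle inequality.

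First, I would introduce $\phi_f:[0,1]\to[\inf_\Omega f,\sup_\Omega f]$, the monotone rearrangement of $f$ with respect to the uniform probability measure $\mu_d/\mu_d(\Omega)$ on $\Omega$, defined as the quantile function of the law of $f$. The hypothesis $\mathcal{ER}(f)=[\inf_\Omega f,\sup_\Omega f]$ forces the distribution function $t\mapsto\mu_d\{f\le t\}/\mu_d(\Omega)$ to be strictly increasing on $[\inf_\Omega f,\sup_\Omega f]$; combined with the boundedness of $f$, this makes $\phi_f$ a continuous increasing bijection of $[0,1]$ onto $[\inf_\Omega f,\sup_\Omega f]$, and in particular uniformly continuous.

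Second, I would prove a general ``quantile convergence lemma'': if $\{M_n\}_n$ is a sequence of finite real multisets with $|M_n|\to\infty$, $\{M_n\}_n\sim f$, and $M_n\subseteq[\inf_\Omega f-\epsilon_n,\sup_\Omega f+\epsilon_n]$ with $\epsilon_n\to0$, then the increasingly sorted entries $\mu_1^{(n)}\le\cdots\le\mu_{|M_n|}^{(n)}$ satisfy
\[ \max_{1\le i\le |M_n|}\bigl|\mu_i^{(n)}-\phi_f(i/|M_n|)\bigr|\to 0. \]
The argument: the asymptotic distribution $\{M_n\}_n\sim f$ is exactly weak convergence of the empirical measures of $M_n$ to the law of $f$; since the limit CDF is continuous, P\'olya's theorem upgrades this to uniform CDF convergence; taking left-continuous inverses then gives uniform convergence of the empirical quantile functions to $\phi_f$ on $(0,1)$, and the range condition with $\epsilon_n\to 0$ rules out any asymptotic escape of the quantile near the endpoints $0$ and $1$.

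Third, I would apply the lemma twice. For the $\lambda$-sequence it is immediate from the two bulleted hypotheses. For the sample sequence $S_n=\{f(\xx_{\ii,\nn}^{(n)}):\ii\in\II_\nn^{(n)}(\Omega)\}$ I must verify $\{S_n\}_n\sim f$ and a range condition. The range condition is trivial, since $f$ is bounded and continuous a.e., so $f(\xx_{\ii,\nn}^{(n)})$ lies essentially within $[\inf_\Omega f,\sup_\Omega f]$ modulo a vanishing enlargement. For the distribution relation, I would write, for any $F\in C_c(\mathbb{R})$,
\[ \frac{1}{|\II_\nn^{(n)}(\Omega)|}\sum_{\ii\in\II_\nn^{(n)}(\Omega)}F(f(\xx_{\ii,\nn}^{(n)}))=\frac{N(\nn)}{|\II_\nn^{(n)}(\Omega)|}\cdot\frac{1}{N(\nn)}\sum_{\ii=\bu}^{\nn}(F\circ f)\cdot\chi_\Omega(\xx_{\ii,\nn}^{(n)}), \]
extending $f$ by $0$ off $\Omega$. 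The function $(F\circ f)\chi_\Omega$ is bounded and continuous a.e.\ on the rectangle $[\aa,\bb]$ (this is where the Peano--Jordan hypothesis $\mu_d(\partial\Omega)=0$ enters, via the a.e.\ continuity of $\chi_\Omega$), hence Riemann integrable; since $\mathcal{G}_\nn^{(n)}$ is an a.u.\ grid in $[\aa,\bb]$, its Riemann sums converge to $\int_{[\aa,\bb]}(F\circ f)\chi_\Omega$. Applying the same reasoning with $F\equiv 1$ on a large compact set gives $|\II_\nn^{(n)}(\Omega)|/N(\nn)\to\mu_d(\Omega)/\mu_d([\aa,\bb])$, yielding the desired $\{S_n\}_n\sim f$.

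Combining, both $\mu_i^{(n)}=\lambda_{\tau_n(\ii),\nn}^{(n)}$ and $\nu_i^{(n)}=f(\xx_{\sigma_n(\ii),\nn}^{(n)})$ (enumerated in sorted order) are within $o(1)$ of $\phi_f(i/|\II_\nn^{(n)}(\Omega)|)$ uniformly in $i$, so $\max_i|\mu_i^{(n)}-\nu_i^{(n)}|\to 0$ by the triangle inequality. The ``in particular'' clause is immediate, since the sorting permutation minimizes the $\ell^\infty$ distance among all pairings of two increasingly sortable vectors. The main obstacle, and the only delicate step, is the quantile convergence lemma in the second paragraph: the passage from weak convergence to uniform convergence of sorted sequences requires the strict monotonicity of the limit CDF (ensured by the essential range assumption) together with careful control of the endpoints via $\epsilon_n\to 0$; without either ingredient, boundary escape could destroy the $\infty$-norm bound.
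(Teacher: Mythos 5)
Your high-level strategy is the same as the paper's: pass to the monotone rearrangement $f^\dag$ (your $\phi_f$), show that both sorted vectors converge uniformly to $f^\dag$ sampled on a uniform grid of $[0,1]$, and finish with the triangle inequality and Weyl-type optimality of the sorted pairing. The treatment of the samples via Riemann sums of $(F\circ f)\chi_\Omega$ and the ratio $|\II_\nn^{(n)}(\Omega)|/N(\nn)$ is essentially the content of the cited result on monotone rearrangements from \cite{EM} that the paper invokes, and the treatment of the $\lambda$'s uses the same combination of distributional convergence, monotonicity, and endpoint control via $\epsilon_n$ that the paper uses.

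There is, however, a genuine gap in your ``quantile convergence lemma''. You assert that the limit CDF $F_f(u)=\mu_d\{f\le u\}/\mu_d(\Omega)$ is continuous and invoke P\'olya's theorem to upgrade weak convergence to uniform CDF convergence, then take inverses. But the hypothesis $\mathcal{ER}(f)=[\inf_\Omega f,\sup_\Omega f]$ does \emph{not} force $F_f$ to be continuous. It forces $F_f$ to be strictly increasing on $[\inf_\Omega f,\sup_\Omega f]$, which is what makes $f^\dag$ continuous, but $F_f$ can still have jumps: take $\Omega=[0,1]$ and $f\equiv 0$ on $[0,1/2]$, $f(x)=2x-1$ on $[1/2,1]$; then $\mathcal{ER}(f)=[0,1]=[\inf f,\sup f]$, yet $\mu_1\{f=0\}=1/2>0$ so $F_f$ jumps at $u=0$. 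In this situation $f^\dag$ is continuous but not injective (it is flat on $(0,1/2]$), so your ``continuous increasing bijection'' claim also fails. Since P\'olya's theorem needs a continuous limit CDF, the chain ``weak convergence $\Rightarrow$ uniform CDF convergence $\Rightarrow$ uniform quantile convergence'' breaks at the first arrow. The correct route, and the one the paper takes, bypasses CDF convergence entirely: weak convergence of empirical measures yields pointwise convergence of the empirical quantile functions at every continuity point of $f^\dag$ in $(0,1)$; continuity of $f^\dag$ on $[0,1]$ (from connectedness of $\mathcal{ER}(f)$ plus boundedness) makes this pointwise everywhere; Dini's second theorem (monotone functions converging pointwise to a continuous limit on a compact interval) then gives uniformity on $[0,1]$ once the boundary values are pinned down by the range condition $M_n\subseteq[\inf_\Omega f-\epsilon_n,\sup_\Omega f+\epsilon_n]$ together with the monotonicity of the sorted vectors. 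If you replace the P\'olya step with this argument, your proof is sound and matches the paper's.
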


\begin{remark}\label{ER(f)=[inf,sup]}
Since $\mathcal{ER}(f)$ is closed, 
the hypothesis $\mathcal{ER}(f)=[\inf_\Omega f,\sup_\Omega f]$ in Theorem~\ref{th:main4-n} is equivalent to asking that $\mathcal{ER}(f)$ is connected with $\inf_\Omega f=\mathop{\rm ess\,inf}_\Omega f$ and $\sup_\Omega f=\mathop{\rm ess\,sup}_\Omega f$. Note that the condition $\inf_\Omega f=\mathop{\rm ess\,inf}_\Omega f$ is equivalent to $\inf_\Omega f\ge\mathop{\rm ess\,inf}_\Omega f$ (the other inequality $\inf_\Omega f\le\mathop{\rm ess\,inf}_\Omega f$ is always satisfied because $\mathcal{ER}(f)\subseteq\overline{f(\Omega)}$). Similarly, the condition $\sup_\Omega f=\mathop{\rm ess\,sup}_\Omega f$ is equivalent to $\sup_\Omega f\le\mathop{\rm ess\,sup}_\Omega f$. 
Note also that the hypothesis $\mathcal{ER}(f)=[\inf_\Omega f,\sup_\Omega f]$ implies 
$\overline{f(\Omega)}=\mathcal{ER}(f)$.
\end{remark}

As a consequence of Theorem~\ref{th:main4-n}, in Corollary~\ref{eccoGi-n} we prove (a slightly more general version of) \cite[Theorem~1.3]{Bottcher-ded-Grudsky}.

\begin{corollary}\label{eccoGi-n}
Let $f:[a,b]\to\mathbb R$ be bounded and continuous a.e.\ with $\mathcal{ER}(f)=[\inf_{[a,b]}f,\sup_{[a,b]}f]$. 
Let $\{\Lambda_n=\{\lambda_{1,n},\ldots,\lambda_{d_n,n}\}\}_n$ be a sequence of finite multisets of  
real numbers such that $d_n\to\infty$ as $n\to\infty$. Assume the following. 
\begin{itemize}[nolistsep,leftmargin=*]
	\item $\{\Lambda_n\}_n\sim f$. 
	\item $\Lambda_n\subseteq[\inf_{[a,b]}f-\epsilon_n,\sup_{[a,b]}f+\epsilon_n]$ for every $n$ and for some $\epsilon_n\to0$ as $n\to\infty$.
\end{itemize}
Then, for every a.u.\ grid $\{x_{i,n}\}_{i=1,\ldots,d_n}$ in $[a,b]$ with $\{x_{i,n}\}_{i=1,\ldots,d_n}\subset[a,b]$, if $\sigma_n$ and $\tau_n$ are two permutations of $\{1,\ldots,d_n\}$ such that the vectors $[f(x_{\sigma_n(1),n}),\ldots,f(x_{\sigma_n(d_n),n})]$ and $[\lambda_{\tau_n(1),n},\ldots,\lambda_{\tau_n(d_n),n}]$ are sorted in increasing order, we have
\[ \max_{i=1,\ldots,d_n}|f(x_{\sigma_n(i),n})-\lambda_{\tau_n(i),n}|\to0\ \,\mbox{as}\ \,n\to\infty. \]
In particular,
\[ \min_\tau\max_{i=1,\ldots,d_n}|f(x_{i,n})-\lambda_{\tau(i),n}|\to0\ \,\mbox{as}\ \,n\to\infty, \]
where the minimum is taken over all permutations $\tau$ of $\{1,\ldots,d_n\}$.
\end{corollary}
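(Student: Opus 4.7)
The plan is to derive Corollary~\ref{eccoGi-n} as a direct specialization of Theorem~\ref{th:main4-n} to the one-dimensional setting with $\Omega=[a,b]$. First I would verify that $[a,b]$ qualifies as a regular set in $\mathbb R$: it is bounded, and $\partial[a,b]=\{a,b\}$ satisfies $\mu_1(\{a,b\})=0$. The function $f$ is bounded and continuous a.e.\ on $[a,b]$ by hypothesis, and the essential-range condition $\mathcal{ER}(f)=[\inf_{[a,b]}f,\sup_{[a,b]}f]$ is assumed verbatim. Thus all standing assumptions on $(\Omega,f)$ in Theorem~\ref{th:main4-n} are met.

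Next I would choose the ambient rectangle of Theorem~\ref{th:main4-n} to be $[\aa,\bb]=[a,b]$ itself, and translate the given one-dimensional a.u.\ grid $\{x_{i,n}\}_{i=1,\ldots,d_n}$ in $[a,b]$ into the multi-index notation of Theorem~\ref{th:main4-n} by setting $\nn=\nn(n)=d_n$ (a scalar $1$-index) and $x_{\ii,\nn}^{(n)}=x_{i,n}$ for $\ii=i\in\{1,\ldots,d_n\}$. Because the grid is assumed to be contained in $[a,b]=\Omega$, every grid point lies in $\Omega$, hence $\II_\nn^{(n)}(\Omega)=\{1,\ldots,d_n\}$ and $|\II_\nn^{(n)}(\Omega)|=d_n=|\Lambda_n|$. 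The two bulleted hypotheses of Theorem~\ref{th:main4-n} are then exactly the two bulleted hypotheses of the corollary: $\{\Lambda_n\}_n\sim f$ and $\Lambda_n\subseteq[\inf_{[a,b]}f-\epsilon_n,\sup_{[a,b]}f+\epsilon_n]$ with $\epsilon_n\to 0$.

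Applying Theorem~\ref{th:main4-n} with this identification yields, for any two permutations $\sigma_n,\tau_n$ of $\{1,\ldots,d_n\}$ sorting the vectors $[f(x_{\sigma_n(1),n}),\ldots,f(x_{\sigma_n(d_n),n})]$ and $[\lambda_{\tau_n(1),n},\ldots,\lambda_{\tau_n(d_n),n}]$ in increasing order, the convergence
\[ \max_{i=1,\ldots,d_n}|f(x_{\sigma_n(i),n})-\lambda_{\tau_n(i),n}|\to 0\quad\text{as }n\to\infty, \]
which is precisely the first conclusion of Corollary~\ref{eccoGi-n}. For the second (``min over permutations'') conclusion, I would simply observe that the permutation $\tau=\tau_n\circ\sigma_n^{-1}$ of $\{1,\ldots,d_n\}$ satisfies
\[ \max_{i=1,\ldots,d_n}|f(x_{i,n})-\lambda_{\tau(i),n}|=\max_{j=1,\ldots,d_n}|f(x_{\sigma_n(j),n})-\lambda_{\tau_n(j),n}|, \]
after the substitution $i=\sigma_n(j)$; since the minimum over all permutations is bounded above by the value at this specific $\tau$, and that value tends to $0$, the minimum tends to $0$ as well.

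Honestly, I do not expect any serious obstacle: the entire argument is a notational specialization, and the only point that deserves a line of justification is the regularity of $[a,b]$ and the identification $\II_\nn^{(n)}(\Omega)=\{1,\ldots,d_n\}$ made possible by the hypothesis that the grid is contained in $[a,b]$ (which is why the corollary imposes this containment, whereas the theorem allows the grid to escape $\Omega$).
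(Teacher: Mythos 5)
Your proposal is correct and follows essentially the same route as the paper's own proof: specialize Theorem~\ref{th:main4-n} to $d=1$ with $\Omega=[a,b]$, take $[\aa,\bb]=[a,b]$, and use the hypothesis $\{x_{i,n}\}_{i=1,\ldots,d_n}\subset[a,b]$ to conclude $\II_\nn^{(n)}(\Omega)=\{1,\ldots,d_n\}$. The only addition beyond the paper is your explicit (and correct) justification of the ``min over permutations'' clause via $\tau=\tau_n\circ\sigma_n^{-1}$, which the paper leaves implicit.
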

\begin{proof}
Take an a.u.\ grid $\{x_{i,n}\}_{i=1,\ldots,d_n}$ in $[a,b]$ and two permutations $\sigma_n$ and $\tau_n$ as specified in the statement. 
Since $\{x_{i,n}\}_{i=1,\ldots,d_n}\subset[a,b]$ by assumption, for every $n$ we have $I_n([a,b])=\{i\in\{1,\ldots,d_n\}:x_{i,n}\in[a,b]\}=\{1,\ldots,d_n\}$.
To conclude that $\max_{i=1,\ldots,d_n}|f(x_{\sigma_n(i),n})-\lambda_{\tau_n(i),n}|\to0$ as $n\to\infty$, apply Theorem~\ref{th:main4-n} to the function $f$ with the a.u.\ grid $\{x_{i,n}\}_{i=1,\ldots,d_n}$ 
and the multiset of real numbers $\{\lambda_{i,n}\}_{i=1,\ldots,d_n}=\Lambda_n$.
\end{proof}

\begin{remark}\label{controesempio}
The hypothesis ``$\mathcal{ER}(f)=[\inf_{[a,b]}f,\sup_{[a,b]}f]$'' in Corollary~\ref{eccoGi-n} is replaced in \cite[Theorem~1.3]{Bottcher-ded-Grudsky} by the weaker version ``$\mathcal{ER}(f)$ is connected''. However, the latter is not enough to get the thesis, 
as shown by the following counterexample. Let $f(x)=\chi_{\{1\}}(x):[0,1]\to\mathbb R$ be the characteristic function of $\{1\}$. Note that $f$ is bounded and continuous a.e.\ with $\mathcal{ER}(f)=\{0\}$ connected. Take $d_n=n$ and $\lambda_{1,n}=\ldots=\lambda_{n,n}=0$ for all $n$. 
All the hypotheses of Corollary~\ref{eccoGi-n} are satisfied except for the assumption ``$\mathcal{ER}(f)=[\inf_{[0,1]}f,\sup_{[0,1]}f]$'', and the thesis does not hold. Indeed, if we take the a.u.\ grid in $[0,1]$ given by $\{x_{i,n}=i/n\}_{i=1,\ldots,n}$, then the samples $f(x_{1,n}),\ldots,f(x_{n,n})$ are sorted in increasing order just as the numbers $\lambda_{1,n},\ldots,\lambda_{n,n}$, and we have $\max_{i=1,\ldots,n}|f(x_{i,n})-\lambda_{i,n}|=1\not\to0$ as $n\to\infty$.
This same counterexample shows that the hypothesis ``$\mathcal{ER}(f)$ is connected'' in \cite[Theorem~1.3]{Bottcher-ded-Grudsky} must be replaced by the stronger version ``$\mathcal{ER}(f)=[\inf_{[a,b]}f,\sup_{[a,b]}f]$'' as in Corollary~\ref{eccoGi-n}, otherwise the result does not hold.\,\footnote{\,With the notations of \cite[Theorem~1.3]{Bottcher-ded-Grudsky}, the symbols $f,\mathcal{ER}(f),d_n,\lambda_{i,n},x_{i,n}$ used in the above counterexample should be changed to $X,\mathcal{R}(X),d(n),\alpha_i^{(n)},\xi_i^{(n)}$, respectively.}
\end{remark}

Theorem~\ref{thm:mmd} is our second main result. It shows that, if in Corollary~\ref{eccoGi-n} we assume that $\Lambda_n\subseteq f([a,b])$ and $f$ has a finite number of local maximum, local minimum, and discontinuity points, then the values in $\Lambda_n$, up to a suitable permutation, are exact samples of $f$ on an a.u.\ grid in $[a,b]$. 
It is important to point out that by ``local maximum/minimum point'' we here mean ``weak local maximum/minimum point'' according to the following definition.

\begin{definition}[\textbf{local extremum points}]\label{wlep}
Given a function $f:[a,b]\to\mathbb R$ and a point $x_0\in[a,b]$, we say that $x_0$ is a local maximum point (resp., local minimum point) for $f$ if $f(x_0)\ge f(x)$ (resp., $f(x_0)\le f(x)$) for all $x$ belonging to a neighborhood of $x_0$ in $[a,b]$.
\end{definition}

For example, if $f$ is constant on $[a,b]$, then all points of $[a,b]$ are both local maximum and local minimum points for $f$. 

\begin{theorem}\label{thm:mmd}
Let $f:[a,b]\to\mathbb R$ be bounded with a finite number of local maximum points, local minimum points, and discontinuity points, and with $\mathcal{ER}(f)=[\inf_{[a,b]}f,\sup_{[a,b]}f]$. Let $\{\Lambda_n=\{\lambda_{1,n},\ldots,\lambda_{d_n,n}\}\}_n$ be a sequence of finite multisets of real numbers such that $d_n\to\infty$ as $n\to\infty$. Assume the following. 
\begin{itemize}[nolistsep,leftmargin=*]
	\item $\{\Lambda_n\}_n\sim f$. 
	\item $\Lambda_n\subseteq f([a,b])$ for every $n$.
\end{itemize}
Then, 
there exist an a.u.\ grid $\{x_{i,n}\}_{i=1,\ldots,d_n}$ in $[a,b]$ with $\{x_{i,n}\}_{i=1,\ldots,d_n}\subset[a,b]$ and a permutation $\tau_n$ of $\{1,\ldots,d_n\}$ such that, for every $n$,
\[ \lambda_{\tau_n(i),n}=f(x_{i,n}),\qquad i=1,\ldots,d_n. \]
\end{theorem}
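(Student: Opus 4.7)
The plan is to decompose $[a,b]$ into finitely many monotone pieces, partition $\Lambda_n$ accordingly, and invert $f$ branchwise. Since $f$ has finitely many local extremum and discontinuity points, we can write $[a,b]=J_1\cup\cdots\cup J_r$ with $J_j=[a_{j-1},a_j]$, $a=a_0<\cdots<a_r=b$, and $f_j:=f|_{J_j}$ continuous and strictly monotone on each $J_j$ (after modifying $f$ at the finitely many endpoints if necessary, which affects only $O(1)$ values of $\Lambda_n$). Let $g_j=f_j^{-1}:f_j(J_j)\to J_j$, $\ell_j=a_j-a_{j-1}$, and $\ell=b-a$. The hypothesis $\mathcal{ER}(f)=[\inf f,\sup f]$ together with the finiteness of the local extrema makes the CDF $F(v)=\mu_1\{x\in[a,b]:f(x)\le v\}/\ell$ continuous (since $f^{-1}(v)$ has at most $r$ points, hence measure zero) and strictly increasing on $[\inf f,\sup f]$; letting $F_j(v)=\mu_1\{x\in J_j:f(x)\le v\}/\ell_j$, one has $\ell F=\sum_j\ell_jF_j$.

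The key technical step is to construct, for each $n$, a partition $\Lambda_n=\Lambda_{1,n}\cup\cdots\cup\Lambda_{r,n}$ such that (i) $\Lambda_{j,n}\subseteq f_j(J_j)$, (ii) $|\Lambda_{j,n}|/d_n\to\ell_j/\ell$, and (iii) $\{\Lambda_{j,n}\}_n\sim f_j$. I build this by bucketing: subdivide $[\inf f,\sup f]$ into $K$ short intervals $I_1,\ldots,I_K$ whose endpoints avoid the critical values $f(a_j)$; each $I_k$ is then covered by a fixed subset $S_k\subseteq\{1,\ldots,r\}$ of branches, with $F(I_k)=\sum_{j\in S_k}(\ell_j/\ell)F_j(I_k)$. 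Within each $I_k$, distribute the $|\Lambda_n\cap I_k|$ elements among $\{j\in S_k\}$ in integer proportions approximating $(\ell_jF_j(I_k))/(\ell F(I_k))$. From $\{\Lambda_n\}_n\sim f$, $|\Lambda_n\cap I_k|/d_n\to F(I_k)$; property (ii) follows by summing over $k$, and (iii) holds for test functions constant on each $I_k$. A standard diagonal argument letting $K=K(n)\to\infty$ slowly as $n\to\infty$ extends (iii) to all $F\in C_c(\mathbb R)$.

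With this partition in hand, Corollary~\ref{eccoGi-n} applies to each branch: $f_j$ is bounded and continuous on $J_j$, $\mathcal{ER}(f_j)=f_j(J_j)$ is a closed interval, $\{\Lambda_{j,n}\}_n\sim f_j$ by (iii), and $\Lambda_{j,n}\subseteq f_j(J_j)$ by (i), so the $\epsilon_n$ in Corollary~\ref{eccoGi-n} may be taken as $0$. Enumerate $\Lambda_{j,n}$ increasingly as $\mu_{j,1,n}\le\cdots\le\mu_{j,d_{j,n},n}$ and set $\xi_{i,n}^{(j)}:=g_j(\mu_{j,i,n})$ (re-indexed backwards if $f_j$ is decreasing on $J_j$, so that the $\xi_{i,n}^{(j)}$ grow along $J_j$). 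The uniform continuity of $g_j$ on the compact $f_j(J_j)$ transfers the a.u.\ conclusion of Corollary~\ref{eccoGi-n} back to $J_j$, yielding that $\{\xi_{i,n}^{(j)}\}_i$ is an a.u.\ grid in $J_j$ with $f(\xi_{i,n}^{(j)})=\mu_{j,i,n}$ exactly. Concatenating the grids across $j=1,\ldots,r$ gives a grid $\{x_{i,n}\}_{i=1}^{d_n}$ in $[a,b]$; (ii) combined with the within-branch a.u.\ property yields that it is a.u.\ in $[a,b]$, and the permutation $\tau_n$ is read off from the assignment.

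The main obstacle is the partition construction: ensuring $\{\Lambda_{j,n}\}_n\sim f_j$ requires a two-parameter limit ($K(n),n\to\infty$) handled by a diagonal/subsequence argument, and one must carefully assign the $O(1)$ elements of $\Lambda_n$ that take the critical values $f(a_j)$ (where $S_k$ changes and where discontinuities may occur) to specific endpoints, verifying that this finite exceptional set does not disrupt asymptotic uniformity.
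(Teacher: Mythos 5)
Your proposal takes a genuinely different route from the paper. The paper applies Corollary~\ref{eccoGi-n} \emph{once} to the full symbol $f$ on the uniform grid $\theta_{i,n}=a+i(b-a)/d_n$, obtaining $\max_i|f(\theta_{i,n})-\mu_{i,n}|=\epsilon_n\to0$ (with $\mu_{i,n}$ the sorted $\Lambda_n$), and then defines $x_{i,n}$ as a point of $f^{-1}(\mu_{i,n})$ \emph{closest} to $\theta_{i,n}$ (well-defined and nonempty by $\Lambda_n\subseteq f([a,b])$ and Corollary~\ref{f-1(l)finito-d}). The real work there is to show the resulting multiset is a.u.: this is done via a ``bad set'' $E_{\delta,\epsilon}$ argument --- Lemma~\ref{ucf-1} on the monotone branches gives $\epsilon^{(\delta)}>0$ so that $\theta_{i,n}$ away from the critical points and with $\epsilon_n\le\epsilon^{(\delta)}$ implies $|x_{i,n}-\theta_{i,n}|\le\delta$, and Lemmas~\ref{conteggio_griglia}, \ref{Ex3.3}, \ref{fonda.u.} then show the bad indices are $o(d_n)$ and thus harmless. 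Your approach inverts the logic: it decomposes $[a,b]$ into the $r$ monotone branches $J_j$, builds a compatible partition $\Lambda_n=\bigcup_j\Lambda_{j,n}$ with $\Lambda_{j,n}\subseteq f_j(J_j)$, $|\Lambda_{j,n}|/d_n\to\ell_j/\ell$, and $\{\Lambda_{j,n}\}_n\sim f_j$, then applies Corollary~\ref{eccoGi-n} branchwise, pulls back by $g_j=f_j^{-1}$, and concatenates. The branchwise pull-back and concatenation steps are correct (uniform continuity of $g_j$, plus the ratio condition on the cardinalities). If the partition exists, the argument goes through.

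The genuine gap is exactly the one you flag: the construction of the branchwise partition. Two points there need more care than you give them. First, the claim that only ``$O(1)$ values of $\Lambda_n$'' are affected by the critical values $f(a_j)$ (or by endpoint modifications) is not justified --- since $\Lambda_n$ is a multiset, a single critical value can be taken with multiplicity as large as $o(d_n)$, and nothing in the hypotheses forces a uniform bound. This is harmless in the end (an $o(d_n)$ perturbation does not disturb asymptotic distributions, cf.\ Lemmas~\ref{0s} and~\ref{Ex5.3}), but it must be argued, not dismissed. Second, the diagonal argument for $\{\Lambda_{j,n}\}_n\sim f_j$ --- a bucketing that changes with $n$ through $K(n)\to\infty$ --- is a real two-parameter limit. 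It can be handled via a countable dense set of test functions and a Lemma~\ref{Ex3.3}-style selection of $K(n)$, but this is substantive; the paper explicitly avoids it (and the closely related splitting machinery it develops in Lemmas~\ref{ss1}, \ref{ss2.g} is used only for the matrix-valued Theorem~\ref{eccoGi'}, and would still require adaptation to the unequal branch lengths $\ell_j$). In short: your route is plausible and conceptually clean, but it front-loads the difficulty into a partition lemma that you only sketch, whereas the paper's closest-preimage strategy sidesteps the partition entirely and reduces the a.u.\ verification to the concrete counting Lemmas~\ref{conteggio_griglia}, \ref{Ex3.3}, \ref{fonda.u.}.
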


Theorem~\ref{eccoGi'} is our last main result. It is a generalization of Corollary~\ref{eccoGi-n} to the case where the scalar function $f$ is replaced by a matrix-valued function. 

\begin{theorem}\label{eccoGi'}
Let $f_1,\ldots,f_k:[a,b]\to\mathbb R$ be bounded and continuous a.e.\ with $\mathcal{ER}(f_1)=[\inf_{[a,b]}f_1,\sup_{[a,b]}f_1],\ldots,$\linebreak$\mathcal{ER}(f_k)=[\inf_{[a,b]}f_k,\sup_{[a,b]}f_k]$.
Let $\{\Lambda_n=\{\lambda_{1,n},\ldots,\lambda_{d_n,n}\}\}_n$ be a sequence of finite multisets of real numbers such that $d_n\to\infty$ as $n\to\infty$.
Assume the following.
\begin{itemize}[nolistsep,leftmargin=*]
	\item $\{\Lambda_n\}_n\sim f$ 
	with $f={\rm diag}(f_1,\ldots,f_k)$.
	\item For every $n$ there exists a partition $\{\tilde\Lambda_{n,1},\ldots,\tilde\Lambda_{n,k}\}$ of $\Lambda_n$ such that, for every $j=1,\ldots,k$, $|\tilde\Lambda_{n,j}|/d_n\to1/k$ as $n\to\infty$ and $\tilde\Lambda_{n,j}\subseteq[\inf_{[a,b]}f_j-\epsilon_n,\sup_{[a,b]}f_j+\epsilon_n]$ for some $\epsilon_n\to0$ as $n\to\infty$.
\end{itemize}
Then, for every $n$ there exists a partition $\{\Lambda_{n,1},\ldots,\Lambda_{n,k}\}$ of $\Lambda_n$ 
such that, for every $j=1,\ldots,k$, the following properties hold.
\begin{itemize}[nolistsep,leftmargin=*]
	\item $|\Lambda_{n,j}|=|\tilde\Lambda_{n,j}|$.
	\item $\Lambda_{n,j}\subseteq[\inf_{[a,b]}f_j-\delta_n,\sup_{[a,b]}f_j+\delta_n]$ for some $\delta_n\to0$ as $n\to\infty$.
	\item $\{\Lambda_{n,j}\}_n\sim f_j$.
	\item Let $\Lambda_{n,j}=\{\lambda_{1,n}^{(j)},\ldots,\lambda_{|\Lambda_{n,j}|,n}^{(j)}\}$. For every a.u.\ grid $\{x_{i,n}^{(j)}\}_{i=1,\ldots,|\Lambda_{n,j}|}$ in $[a,b]$ with $\{x_{i,n}^{(j)}\}_{i=1,\ldots,|\Lambda_{n,j}|}\subset[a,b]$, if $\sigma_{n,j}$ and $\tau_{n,j}$ are two permutations of $\{1,\ldots,|\Lambda_{n,j}|\}$ such that the vectors $[f_j(x_{\sigma_{n,j}(1),n}^{(j)}),\ldots,f_j(x_{\sigma_{n,j}(|\Lambda_{n,j}|),n}^{(j)})]$ and $[\lambda_{\tau_{n,j}(1),n}^{(j)},\ldots,\lambda_{\tau_{n,j}(|\Lambda_{n,j}|),n}^{(j)}]$ are sorted in increasing order, we have
	\[ \max_{i=1,\ldots,|\Lambda_{n,j}|}|f_j(x_{\sigma_{n,j}(i),n}^{(j)})-\lambda_{\tau_{n,j}(i),n}^{(j)}|\to0\ \,\mbox{as}\ \,n\to\infty. \]
	In particular,
	\[ \min_\tau\max_{i=1,\ldots,|\Lambda_{n,j}|}|f_j(x_{i,n}^{(j)})-\lambda_{\tau(i),n}^{(j)}|\to0\ \,\mbox{as}\ \,n\to\infty, \]
	where 
	the minimum is taken over all permutations $\tau$ of $\{1,\ldots,|\Lambda_{n,j}|\}$.
\end{itemize}
\end{theorem}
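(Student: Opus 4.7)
The plan is to reduce the matrix-valued statement to a scalar one via an ``unrolling'' trick and then invoke Corollary~\ref{eccoGi-n}. Setting $m_j=|\tilde\Lambda_{n,j}|$, I would split $[a,b]$ into $k$ consecutive subintervals $J_1,\ldots,J_k$ of equal length, let $\phi_j\colon J_j\to[a,b]$ be the orientation-preserving affine bijection, and define $g\colon[a,b]\to\mathbb R$ by $g|_{J_j}=f_j\circ\phi_j$. A one-line change of variables shows that $\{\Lambda_n\}_n\sim f$ (as a matrix-valued symbol) is equivalent to $\{\Lambda_n\}_n\sim g$ (as a scalar symbol), and it is easy to check that $g$ is bounded and continuous a.e., with $\inf_{[a,b]}g=\min_j\inf_{[a,b]}f_j$, $\sup_{[a,b]}g=\max_j\sup_{[a,b]}f_j$, and $\Lambda_n\subseteq[\inf_{[a,b]}g-\epsilon_n,\sup_{[a,b]}g+\epsilon_n]$.

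Next, I would build an a.u.\ grid $\{x_{i,n}\}_{i=1,\ldots,d_n}$ in $[a,b]$ by placing exactly $m_j$ uniformly spaced points inside each $J_j$; since $m_j/d_n\to1/k=|J_j|/(b-a)$, a short computation confirms that the $\ell$-th point equals $a+\ell(b-a)/d_n$ up to an $o(1)$ error, so this is indeed an a.u.\ grid in $[a,b]$. If $\mathcal{ER}(g)=[\inf g,\sup g]$, i.e.\ if $\bigcup_j[\inf f_j,\sup f_j]$ is an interval, Corollary~\ref{eccoGi-n} applied to $g$ with this grid yields permutations $\sigma_n,\tau_n$ such that $\max_i|g(x_{\sigma_n(i),n})-\lambda_{\tau_n(i),n}|\to0$. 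Otherwise, I would argue component by component: for each connected component $C_m$ of $\bigcup_j[\inf f_j,\sup f_j]$ with associated index set $K_m=\{j:[\inf f_j,\sup f_j]\subseteq C_m\}$, a test-function argument using $F\in C_c(\mathbb R)$ supported near $C_m$ alone shows that $\Lambda_n^{(m)}=\bigsqcup_{j\in K_m}\tilde\Lambda_{n,j}$ has distribution $\mathrm{diag}(f_j:j\in K_m)$, and the unrolling of this sub-diagonal has essential range exactly $C_m$, so Corollary~\ref{eccoGi-n} again applies.

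I would then define $\Lambda_{n,j}$ to be the collection of the $\lambda_{\tau_n(i),n}$ for which $x_{\sigma_n(i),n}\in J_j$. By construction, $|\Lambda_{n,j}|=m_j=|\tilde\Lambda_{n,j}|$. The uniform estimate above, together with the fact that the tagged positions within the sorted target list are themselves sorted, implies that $\Lambda_{n,j}$ (after sorting) is sup-norm close to the sorted samples $\{f_j(\phi_j(x_{\sigma_n(i),n})):x_{\sigma_n(i),n}\in J_j\}$, which are values of $f_j$ along an a.u.\ grid of $[a,b]$. This immediately yields both the containment $\Lambda_{n,j}\subseteq[\inf_{[a,b]}f_j-\delta_n,\sup_{[a,b]}f_j+\delta_n]$ and the asymptotic distribution $\{\Lambda_{n,j}\}_n\sim f_j$, since sup-norm convergence preserves the distribution. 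The fourth bullet then follows by applying Corollary~\ref{eccoGi-n} to each pair $(f_j,\Lambda_{n,j})$.

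The main obstacle will be the disconnected-range case: one must verify carefully that the $\epsilon_n$-fattenings of different connected components remain disjoint for $n$ large, so that the decomposition $\Lambda_n=\bigsqcup_m\Lambda_n^{(m)}$ is essentially forced, and that the component-restricted distribution really equals $\mathrm{diag}(f_j:j\in K_m)$ rather than some redistribution across components. Everything else amounts to routine bookkeeping: the unrolling identity, the existence of the prescribed-cardinalities a.u.\ grid, and the elementary fact that uniform approximation preserves the asymptotic distribution.
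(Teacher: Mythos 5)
Your approach is correct, and it is genuinely different from — and arguably more elementary than — the paper's. The paper also begins with an unrolling step (its Lemma~\ref{lem:concat}, which is your ``change of variables''), but then invokes GLT theory: it sorts the diagonal matrix ${\rm diag}(\lambda_{1,n},\ldots,\lambda_{d_n,n})$ into a GLT sequence via Lemma~\ref{lem:main_spectral}, splits it along the diagonal using a restriction-operator lemma (Lemma~\ref{lem:reduced_GLT_split}, resting on Lemma~\ref{lem:different_grids}), and finally repairs the resulting partition so that each piece stays in the prescribed $\epsilon_n$-fattening, via a graph-theoretic ``successive displacements'' argument (Lemma~\ref{ss2.g} with Lemma~\ref{lem:graph}). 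Your route bypasses all of that: you apply Corollary~\ref{eccoGi-n} once to the unrolled scalar symbol $g$ over an a.u.\ grid whose subinterval counts are forced to equal $|\tilde\Lambda_{n,j}|$, and you read the partition directly off the sorting permutation. Because the pairing produced by Corollary~\ref{eccoGi-n} is order-preserving and uniformly close, each restricted piece $\Lambda_{n,j}$ is automatically sup-norm close to sorted samples of $f_j$ on an a.u.\ grid in $[a,b]$, which yields the cardinality, containment, and distribution conclusions simultaneously. The price you pay is that Corollary~\ref{eccoGi-n} requires $\mathcal{ER}(g)$ to be a single interval, so you must treat each connected component $C_m$ of $\bigcup_j[\inf f_j,\sup f_j]$ separately; your resolution of this is sound — the components are closed intervals at positive mutual distance, so the $\epsilon_n$-fattenings are eventually disjoint, and a partition-of-unity/test-function argument, combined with $|\Lambda_n^{(m)}|/d_n\to|K_m|/k$, gives $\{\Lambda_n^{(m)}\}_n\sim{\rm diag}(f_j:j\in K_m)$ with essential range exactly $C_m=[\inf g_m,\sup g_m]$. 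What the paper's heavier machinery buys is uniformity across cases (no case split on the connectivity of the unrolled range) and a statement (Lemma~\ref{ss2.g}) that is proved for general measurable $f_j$ and is of independent interest; what your approach buys is a shorter, self-contained proof of Theorem~\ref{eccoGi'} that does not import GLT theory.
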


In order to prove Theorem~\ref{eccoGi'}, we shall need the following lemmas, which are reported here because they have a special interest in themselves and may be considered as further main results of this paper, although ``less important'' than Theorems~\ref{th:main4-n}--\ref{eccoGi'}.

\begin{lemma}\label{ss1}
Let $f_1,\ldots,f_k:[a,b]\to\mathbb R$ be measurable. Let $\{\Lambda_n=\{\lambda_{1,n},\ldots,\lambda_{d_n,n}\}\}_n$ be a sequence of finite multisets of real numbers such that $d_n\to\infty$ as $n\to\infty$. Assume the following. 
\begin{itemize}[nolistsep,leftmargin=*]
	\item $\{\Lambda_n\}_n\sim f$ with $f={\rm diag}(f_1,\ldots,f_k)$. 
	\item $\{L_{n,1}\}_n,\ldots,\{L_{n,k}\}_n$ are sequences of natural numbers such that $L_{n,1}+\ldots+L_{n,k}=d_n$ for every $n$ and $L_{n,j}/d_n\to1/k$ as $n\to\infty$ for every $j=1,\ldots,k$.
\end{itemize}
Then, for every $n$ there exists a partition $\{\Lambda_{n,1},\ldots,\Lambda_{n,k}\}$ of $\Lambda_n$ such that, for every $j=1,\ldots,k$, the following properties hold.
\begin{itemize}[nolistsep,leftmargin=*]
	\item $|\Lambda_{n,j}|=L_{n,j}$.
	\item $\{\Lambda_{n,j}\}_n\sim f_j$. 
\end{itemize}
\end{lemma}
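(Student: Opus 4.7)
The plan is to build canonical ``model'' multisets for each symbol $f_j$ whose partition structure is transparent, and then transfer that structure to $\Lambda_n$ via sorted matching. The guiding observation is that since the ambient multiset $\Lambda_n$ and the model multiset share the same limiting empirical distribution, their sorted sequences are uniformly close in a one-dimensional Wasserstein sense, and a position-wise matching inherits the correct per-group distribution.

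Concretely, for each $j=1,\ldots,k$ I would let $\phi_j^*:[a,b]\to\mathbb R$ be the non-decreasing rearrangement of $f_j$ on $[a,b]$ and set
\[ \Sigma_{n,j}:=\bigl\{\phi_j^*\bigl(a+(\ell-\tfrac12)(b-a)/L_{n,j}\bigr):\ell=1,\ldots,L_{n,j}\bigr\}. \]
Since $\phi_j^*$ is monotone, for every $F\in C_c(\mathbb R)$ the composition $F\circ\phi_j^*$ is bounded and has at most countably many discontinuities on $[a,b]$, hence is Riemann integrable; standard Riemann-sum convergence together with the equality of distributions of $\phi_j^*$ and $f_j$ yields $\{\Sigma_{n,j}\}_n\sim f_j$. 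Writing $j(\sigma)$ for the natural group label carried by $\sigma$ in the disjoint union $\Sigma_n:=\bigsqcup_{j=1}^k\Sigma_{n,j}$, one has $|\Sigma_n|=d_n$, and, using $L_{n,j}/d_n\to1/k$, the combined relation $\{\Sigma_n\}_n\sim f$.

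I would then sort $\lambda_1^*\le\cdots\le\lambda_{d_n}^*$ and $\sigma_1^*\le\cdots\le\sigma_{d_n}^*$ and define $\Lambda_{n,j}:=\{\lambda_i^*:j(\sigma_i^*)=j\}$, for which $|\Lambda_{n,j}|=L_{n,j}$ is automatic. To verify $\{\Lambda_{n,j}\}_n\sim f_j$, I fix $F\in C_c(\mathbb R)$ with uniform modulus of continuity $\omega_F$ and decompose
\[ \frac1{L_{n,j}}\sum_{\lambda\in\Lambda_{n,j}}F(\lambda)=\frac1{L_{n,j}}\sum_{\sigma\in\Sigma_{n,j}}F(\sigma)+\frac1{L_{n,j}}\sum_{i:\,j(\sigma_i^*)=j}\bigl[F(\lambda_i^*)-F(\sigma_i^*)\bigr]. \]
The first term tends to $(b-a)^{-1}\int_a^b F(f_j(x))\,\mathrm{d}x$ by the model step, and the second is bounded in absolute value by $\omega_F(\delta)+2\|F\|_\infty|\{i:|\lambda_i^*-\sigma_i^*|\ge\delta\}|/L_{n,j}$ for any $\delta>0$.

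The hard part will be showing $|\{i:|\lambda_i^*-\sigma_i^*|\ge\delta\}|=o(d_n)$ for every $\delta>0$. Since the support of $F$ is compact, one may truncate both multisets to a common bounded interval without affecting the sums; both truncated empirical measures converge vaguely to the same compactly supported mixture $\mu=k^{-1}\sum_j\mu_j$, so their cumulative distribution functions converge in $L^1$ on that interval. In one dimension the optimal transport cost for the Wasserstein-$1$ distance is realized by the monotone rearrangement, giving $d_n^{-1}\sum_i|\lambda_i^*-\sigma_i^*|\to0$, and Markov's inequality yields the required $o(d_n)$ bound, which is equivalent to $o(L_{n,j})$ because $L_{n,j}/d_n\to1/k>0$. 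Letting $\delta\to0$ then completes the verification for arbitrary $F\in C_c(\mathbb R)$.
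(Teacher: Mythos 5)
Your approach is genuinely different from the paper's. The paper proves this lemma by a chain of GLT-theoretic reductions: the concatenation lemma turns $\{D_n\}_n\sim_\lambda f$ with $f=\mathrm{diag}(f_1,\ldots,f_k)$ into $\{D_n\}_n\sim_\lambda\tilde f$ for a scalar concatenation $\tilde f$ on $[0,1]$; a theorem from \cite{sm} produces permutation matrices $P_n$ with $\{P_nD_nP_n^T\}_n\sim_{\rm GLT}\tilde f$; and a restriction/splitting lemma for GLT sequences then carves the permuted diagonal into $k$ blocks of the prescribed sizes, each inheriting the symbol $\tilde f|_{[(j-1)/k,j/k]}$. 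Your proposal avoids the GLT machinery entirely, instead building a model multiset $\Sigma_n$ by sampling the monotone rearrangements of the $f_j$, and transferring its explicit group labels to $\Lambda_n$ by sorted (quantile) matching. Your model construction and the passage from the position-wise closeness bound to $\{\Lambda_{n,j}\}_n\sim f_j$ are correct.

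There is, however, a real gap in the step that establishes $|\{i:|\lambda_i^*-\sigma_i^*|\ge\delta\}|=o(d_n)$, and it is precisely the step you flagged as hard. Your route via Wasserstein-$1$ plus Markov does not quite go through as written. First, the lemma assumes only that the $f_j$ are measurable, so the limit mixture $\mu=k^{-1}\sum_j\mu_j$ need not be compactly supported, and the phrase \emph{``truncate both multisets to a common bounded interval''} does not cleanly apply. Second, even when $\mu$ is compactly supported, the hypothesis $\{\Lambda_n\}_n\sim f$ allows $o(d_n)$ outliers in $\Lambda_n$ at arbitrary locations, so the full sum $d_n^{-1}\sum_i|\lambda_i^*-\sigma_i^*|$ can diverge and $W_1(\mu_{\Lambda_n},\mu_{\Sigma_n})$ need not tend to $0$; and if you truncate $\Lambda_n$ to kill the outliers, the truncated multiset has a different cardinality than $\Sigma_n$, so the position-wise alignment $i\mapsto(\lambda_i^*,\sigma_i^*)$ is no longer the monotone transport between the two and the Markov step does not apply to the quantity you actually need. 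The claim itself is true and can be repaired without Wasserstein: since $\mu_{\Lambda_n}$ and $\mu_{\Sigma_n}$ are probability measures and the target $\mu$ is a probability measure, the vague convergence in Definition~\ref{adn} upgrades to weak convergence; weak convergence of probability measures on $\mathbb{R}$ implies pointwise convergence of quantile functions at every continuity point of the limit quantile, hence a.e.\ on $(0,1)$; thus $|q_{\Lambda_n}(y)-q_{\Sigma_n}(y)|\to0$ for a.e.\ $y$, and bounded convergence gives $\mu_1\{y:|q_{\Lambda_n}(y)-q_{\Sigma_n}(y)|\ge\delta\}\to0$, which is exactly $d_n^{-1}|\{i:|\lambda_i^*-\sigma_i^*|\ge\delta\}|\to0$. (This is essentially Lemma~\ref{ultimo} applied to both sequences of sorted samples.) With that replacement your argument is complete and yields a more elementary proof than the paper's GLT route.
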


\begin{lemma}\label{ss2.g}
Let $f_1,\ldots,f_k:[a,b]\to\mathbb R$ be measurable. Let $\{\Lambda_n=\{\lambda_{1,n},\ldots,\lambda_{d_n,n}\}\}_n$ be a sequence of finite multisets of real numbers such that $d_n\to\infty$ as $n\to\infty$. Assume the following. 
\begin{itemize}[nolistsep,leftmargin=*]
	\item $\{\Lambda_n\}_n\sim f$ with $f={\rm diag}(f_1,\ldots,f_k)$. 
	\item For every $n$ there exists a partition $\{\tilde\Lambda_{n,1},\ldots,\tilde\Lambda_{n,k}\}$ of $\Lambda_n$ such that, for every $j=1,\ldots,k$, $|\tilde\Lambda_{n,j}|/d_n\to1/k$ as $n\to\infty$ and $\tilde\Lambda_{n,j}\subseteq(\mathcal{ER}(f_j))_{\epsilon_n}$ for some $\epsilon_n\to0$ as $n\to\infty$.
\end{itemize}
Then, for every $n$ there exists a partition $\{\Lambda_{n,1},\ldots,\Lambda_{n,k}\}$ of $\Lambda_n$ 
such that, for every $j=1,\ldots,k$, the following properties hold.
\begin{itemize}[nolistsep,leftmargin=*]
	\item $|\Lambda_{n,j}|=|\tilde\Lambda_{n,j}|$.
	\item $\{\Lambda_{n,j}\}_n\sim f_j$. 
	\item $\Lambda_{n,j}\subseteq(\mathcal{ER}(f_j))_{\delta_n}$ for some $\delta_n\to0$ as $n\to\infty$.
\end{itemize}
\end{lemma}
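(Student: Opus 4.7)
The plan is first to apply Lemma~\ref{ss1} to produce a preliminary partition with the correct asymptotic distributions and exact bucket sizes, and then to modify this partition by relocating a vanishing fraction of its elements so as to also enforce the essential-range containment, without spoiling the sizes or the distributions. Setting $L_{n,j}:=|\tilde\Lambda_{n,j}|$, which satisfies $L_{n,j}/d_n\to 1/k$ by hypothesis, Lemma~\ref{ss1} produces a partition $\{\Lambda^\dagger_{n,1},\ldots,\Lambda^\dagger_{n,k}\}$ of $\Lambda_n$ with $|\Lambda^\dagger_{n,j}|=|\tilde\Lambda_{n,j}|$ and $\{\Lambda^\dagger_{n,j}\}_n\sim f_j$ for every $j$; the only property still missing is the containment $\Lambda^\dagger_{n,j}\subseteq(\mathcal{ER}(f_j))_{\delta_n}$ for some $\delta_n\to 0$.

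The intermediate step is the outlier estimate: writing $E_j:=\mathcal{ER}(f_j)$, I would show that for every fixed $\delta>0$ and every $j$, $|\Lambda^\dagger_{n,j}\setminus(E_j)_\delta|/d_n\to 0$. To prove it, I would first apply the global distribution $\{\Lambda_n\}_n\sim f$ to continuous truncations of $\chi_{\mathbb R\setminus[-M,M]}$ and use the a.e.\ finiteness of the $f_{j'}$ to choose, for any $\eta>0$, some $M>0$ with $|\Lambda_n\setminus[-M,M]|/d_n<\eta$ for all large $n$. Then I would pick $F_\delta\in C_c(\mathbb R)$ supported in $[-2M,2M]$, equal to $1$ on $[-M,M]\setminus(E_j)_\delta$ and vanishing on $(E_j)_{\delta/2}$; applying $\{\Lambda^\dagger_{n,j}\}_n\sim f_j$ to $F_\delta$ yields
\[
\frac{1}{|\Lambda^\dagger_{n,j}|}\sum_{\lambda\in\Lambda^\dagger_{n,j}}F_\delta(\lambda)\longrightarrow\frac{1}{b-a}\int_a^b F_\delta(f_j(x))\,\mathrm{d}x=0,
\]
because $f_j(x)\in E_j\subseteq(E_j)_{\delta/2}$ a.e.\ so the integrand vanishes a.e. Combining with the tail bound and letting $\eta\to 0$ proves the claim, after which a diagonal argument produces $\delta_n\to 0$ (which we may take $\geq\epsilon_n$) such that $B_n:=\bigcup_j(\Lambda^\dagger_{n,j}\setminus(E_j)_{\delta_n})$ satisfies $|B_n|=o(d_n)$.

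To conclude, I would modify $\{\Lambda^\dagger_{n,j}\}$ into the sought $\{\Lambda_{n,j}\}$ by relocating only the $o(d_n)$ elements of $B_n$. Every $\lambda\in B_n$ belongs to some tilde bucket $\tilde\Lambda_{n,j_0}\subseteq(E_{j_0})_{\epsilon_n}\subseteq(E_{j_0})_{\delta_n}$, so has at least one bucket it can safely be assigned to. A Hall-type marriage argument on the bipartite graph with left vertices $B_n$, right vertices $\{1,\ldots,k\}$, an edge from $\lambda$ to $j$ iff $\lambda\in(E_j)_{\delta_n}$, and bucket-$j$ demand equal to $|\Lambda^\dagger_{n,j}\cap B_n|$, yields a redistribution of $B_n$ that restores exactly the sizes $|\tilde\Lambda_{n,j}|$ while enforcing $\Lambda_{n,j}\subseteq(E_j)_{\delta_n}$. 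The Hall condition is verified via the inclusions $B_n\subseteq\bigcup_{j'}\tilde\Lambda_{n,j'}$ and the size compatibility inherited from the tilde partition, possibly after an auxiliary $o(d_n)$ compensating exchange of already-contained elements to balance any residual demand mismatch. Since at most $|B_n|=o(d_n)$ elements are moved, the relations $\{\Lambda_{n,j}\}_n\sim f_j$ are inherited from $\{\Lambda^\dagger_{n,j}\}_n\sim f_j$ by the standard continuity of asymptotic distributions under $o(d_n)$ perturbations.

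The hardest step will be this last one: preserving the \emph{exact} bucket sizes $|\tilde\Lambda_{n,j}|$ prescribed by the tilde partition while rerouting the bad elements. The counts $|\Lambda^\dagger_{n,j}\cap B_n|$ and $|\tilde\Lambda_{n,j}\cap B_n|$ need not coincide, so the naive ``send each bad element to its tilde bucket'' swap generally fails to match sizes; one must argue (via the containment of the tilde partition in $\bigcup_{j'}(E_{j'})_{\epsilon_n}$) that Hall's condition holds with enough slack for any residual mismatch to be absorbed by an $o(d_n)$ exchange of good elements without violating either the containment or the distributional properties.
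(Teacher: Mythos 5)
Your high-level structure matches the paper's: first invoke Lemma~\ref{ss1} with $L_{n,j}=|\tilde\Lambda_{n,j}|$, then identify a set $B_n$ of $o(d_n)$ ``bad'' elements outside the inflated essential ranges (the paper gets this from Lemma~\ref{s-attr} plus Lemma~\ref{Ex3.3} rather than your explicit truncation argument, but the content is the same), and finally relocate those elements while preserving sizes and distributions.

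The genuine gap is in your final redistribution step, and you partly sense it yourself. The bipartite graph with left vertices $B_n$ and right vertices $\{1,\ldots,k\}$, demanding that bucket $j$ receive exactly $|\Lambda^\dagger_{n,j}\cap B_n|$ bad elements, does \emph{not} satisfy Hall's condition in general. The inclusion $B_n\subseteq\bigcup_{j'}\tilde\Lambda_{n,j'}$ only gives you, for a set $T\subseteq\{1,\ldots,k\}$ and $S=\{\lambda\in B_n:N(\lambda)\subseteq T\}$, the bound $|S|\le\sum_{j\in T}|\tilde\Lambda_{n,j}|=\sum_{j\in T}|\Lambda^\dagger_{n,j}|$, which is far weaker than the required $|S|\le\sum_{j\in T}|\Lambda^\dagger_{n,j}\cap B_n|$. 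Concretely, two bad elements may both be admissible only for bucket $1$, while only one bad element was removed from bucket $1$; then your matching has no solution and you are forced to also move good elements between buckets. That ``auxiliary $o(d_n)$ compensating exchange'' is not a minor patch; it is the entire substance of the combinatorial step, and your proposal does not justify that it can always be done while keeping exact sizes, staying inside each $(E_j)_{\delta_n}$, and moving only $o(d_n)$ elements overall.

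The paper closes exactly this gap with Lemma~\ref{lem:graph}, a directed-graph statement about two equal-cardinality partitions: if $\hat\Lambda_{n,i}\cap\tilde\Lambda_{n,j}\neq\emptyset$, there is a directed path from $j$ back to $i$ in the graph whose edges record nonempty intersections. That path yields a chain $x_0,\ldots,x_q$ (possibly passing through good elements) that can be cyclically shifted to remove one bad element from one bucket without changing any bucket size and without creating new bad elements, because each $x_s$ lands in a bucket compatible with its tilde membership, hence with its essential-range constraint. Iterating $N_n=o(d_n)$ times clears all bad sets. This augmenting-path argument on the two partitions is what your Hall formulation on $B_n$ alone is missing; once you have that lemma, the rest of your plan goes through, and the final distributional invariance follows exactly as you say via Lemma~\ref{0s} and Lemma~\ref{Ex5.3}.
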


\section{Proofs of the main results}\label{proofs}

\subsection{Monotone rearrangement}

In this section, we recall the notion of monotone rearrangement and we collect some related results that we shall need in the proof of Theorem~\ref{th:main4-n}.

\begin{definition}
Let $f:\Omega\subset\mathbb R^d\to\mathbb R$ be measurable on a set $\Omega$ with $0<\mu_d(\Omega)<\infty$. 
The monotone rearrangement of $f$ is the function denoted by $f^\dag$ and defined as follows:
\begin{equation}\label{crv}
f^\dag(y)=\inf\biggl\{u\in\mathbb R:\frac{\mu_d\{f\le u\}}{\mu_d(\Omega)}\ge y\biggr\},\qquad y\in(0,1).
\end{equation}
\end{definition}

Note that $f^\dag(y)$ is a well-defined real number for every $y\in(0,1)$, because
\[ \lim_{u\to+\infty}\mu_d\{f\le u\}=\mu_d(\Omega),\qquad\lim_{u\to-\infty}\mu_d\{f\le u\}=0. \]
In probability theory, where $f$ is interpreted as a random variable on $\Omega$ with probability distribution $m_f$ and distribution function $F_f$ given by
\begin{align*}
m_f(A)&=\mathbb P\{f\in A\}=\frac{\mu_d\{f\in A\}}{\mu_d(\Omega)},\qquad A\subseteq\mathbb R\mbox{ is a Borel set},\\
F_f(u)&=\mathbb P\{f\le u\}=\frac{\mu_d\{f\le u\}}{\mu_d(\Omega)},\qquad u\in\mathbb R,
\end{align*}
the monotone rearrangement $f^\dag$ in \eqref{crv} can be rewritten as
\[ f^\dag(y)=\inf\bigl\{u\in\mathbb R:F_f(u)\ge y\bigr\},\qquad y\in(0,1), \]
and is referred to as the quantile function of $f$ or the generalized inverse of $F_f$.
It is clear that $f^\dag$ is monotone increasing on $(0,1)$, which implies that the limits $\lim_{y\to0}f^\dag(y)$ and $\lim_{y\to1}f^\dag(y)$ always exist.
The next lemma gives the exact values of these limits and allows us to complete the definition of $f^\dag$ by continuous extension; see Definition~\ref{def_ext}.

\begin{lemma}\label{lemma_ext}
Let $f:\Omega\subset\mathbb R^d\to\mathbb R$ be measurable on a set $\Omega$ with $0<\mu_d(\Omega)<\infty$. Then,
\[ \lim_{y\to0}f^\dag(y)=\mathop{\rm ess\,inf}_\Omega f,\qquad \lim_{y\to1}f^\dag(y)=\mathop{\rm ess\,sup}_\Omega f. \]
\end{lemma}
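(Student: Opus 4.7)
The plan is to prove the two limits separately, noting that $f^\dag$ is monotone increasing on $(0,1)$ by construction, so both $\lim_{y\to 0^+}f^\dag(y)$ and $\lim_{y\to 1^-}f^\dag(y)$ exist as elements of $[-\infty,+\infty]$ and coincide respectively with $\inf_{y\in(0,1)}f^\dag(y)$ and $\sup_{y\in(0,1)}f^\dag(y)$. The key ingredients will be: (i) $\mathcal{ER}(f)$ is closed and $f\in\mathcal{ER}(f)$ a.e., hence $\mu_d\{f<\mathop{\rm ess\,inf}_\Omega f\}=0$ and $\mu_d\{f>\mathop{\rm ess\,sup}_\Omega f\}=0$; (ii) the distribution function $F_f(u)=\mu_d\{f\le u\}/\mu_d(\Omega)$ is increasing with $\lim_{u\to-\infty}F_f(u)=0$ and $\lim_{u\to+\infty}F_f(u)=1$; (iii) the standard ``generalized inverse'' manipulations relating $F_f(u)\ge y$ and $f^\dag(y)\le u$.

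For the left limit, I would show the two inequalities $\lim_{y\to0^+}f^\dag(y)\ge\mathop{\rm ess\,inf}_\Omega f$ and $\lim_{y\to0^+}f^\dag(y)\le\mathop{\rm ess\,inf}_\Omega f$. The first inequality: for any $u<\mathop{\rm ess\,inf}_\Omega f$, point (i) yields $\mu_d\{f\le u\}=0$, so $u$ never belongs to the set whose infimum defines $f^\dag(y)$ (for any $y>0$); hence $f^\dag(y)\ge u$ for every $y\in(0,1)$, and letting $u\nearrow\mathop{\rm ess\,inf}_\Omega f$ gives the bound. The second inequality: pick any $u>\mathop{\rm ess\,inf}_\Omega f$; since $\mathcal{ER}(f)$ is closed there is $z\in\mathcal{ER}(f)$ with $z<u$, whence by the definition of $\mathcal{ER}(f)$ one has $\mu_d\{f\le u\}>0$, so $y_0:=F_f(u)>0$; for every $0<y\le y_0$ the element $u$ lies in the defining set, so $f^\dag(y)\le u$, and taking $y\to0^+$ yields $\lim_{y\to0^+}f^\dag(y)\le u$, then $u\searrow\mathop{\rm ess\,inf}_\Omega f$.

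The right limit is proved symmetrically. For any $u>\mathop{\rm ess\,sup}_\Omega f$ (if such $u$ exists), point (i) gives $F_f(u)=1\ge y$ for every $y\in(0,1)$, so $f^\dag(y)\le u$ always, hence $\sup_y f^\dag(y)\le u$ and then $\le\mathop{\rm ess\,sup}_\Omega f$. For any $u<\mathop{\rm ess\,sup}_\Omega f$, there is $z\in\mathcal{ER}(f)$ with $z>u$, and a small-radius disk argument yields $\mu_d\{f>u\}>0$, so $y_0:=F_f(u)<1$; for every $y>y_0$, the monotonicity of $F_f$ shows that no $v\le u$ can belong to the defining set, hence $f^\dag(y)\ge u$, and letting $y\to1^-$ gives $\lim_{y\to1^-}f^\dag(y)\ge u$, then $u\nearrow\mathop{\rm ess\,sup}_\Omega f$.

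The main technical care will be in the cases $\mathop{\rm ess\,inf}_\Omega f=-\infty$ or $\mathop{\rm ess\,sup}_\Omega f=+\infty$ (where the ``pick $u$ above/below'' steps need to be interpreted with $u\to\pm\infty$) and in making sure the implication ``$u$ in the defining set $\Rightarrow f^\dag(y)\le u$'' and its converse ``$u$ not in the set $\Rightarrow$ every element of the set exceeds $u$, hence $f^\dag(y)\ge u$'' are cleanly stated; the second of these requires the monotonicity of $F_f$ and is the only slightly subtle point of the argument.
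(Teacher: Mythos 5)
Your proposal is correct and follows essentially the same route as the paper: both establish the limit by bounding $f^\dag$ above and below near the endpoint, using that $F_f(u)=0$ for $u<\mathop{\rm ess\,inf}_\Omega f$ (from $f\in\mathcal{ER}(f)$ a.e.) and $F_f(u)>0$ for $u>\mathop{\rm ess\,inf}_\Omega f$ (from the definition of $\mathcal{ER}(f)$), together with the monotonicity of $F_f$ when passing from ``$F_f(u)\ge y$'' to bounds on $f^\dag(y)$. The only cosmetic difference is that the paper splits explicitly into the cases $\mathop{\rm ess\,inf}_\Omega f=-\infty$ and $\mathop{\rm ess\,inf}_\Omega f\in\mathbb R$, whereas you run both through a single pair of inequalities parametrized by an auxiliary level $u$ (the ``$u<\mathop{\rm ess\,inf}$'' inequality simply becoming vacuous in the infinite case); also note that the closedness of $\mathcal{ER}(f)$, which you cite, is not actually needed to produce a point $z\in\mathcal{ER}(f)$ with $z<u$ when $u>\inf\mathcal{ER}(f)$ — that follows from the definition of the infimum alone.
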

\begin{proof}
We only prove the equality $\lim_{y\to0}f^\dag(y)=\mathop{\rm ess\,inf}_\Omega f$ as the proof of the other equality is analogous.

\medskip

\noindent
{\em Case 1:} $\mathop{\rm ess\,inf}_\Omega f=-\infty$. In this case, by definition of $\mathop{\rm ess\,inf}_\Omega f$, we have $\mu_d\{f\le u\}>0$ for all $u\in\mathbb R$, i.e., $F_f(u)>0$ for all $u\in\mathbb R$. Hence, for every $u_0\in\mathbb R$ there exists $y_0=F_f(u_0)/2\in(0,1)$ such that, for $0<y\le y_0$, 
\[ f^\dag(y)\le f^\dag(y_0)=\inf\{u\in\mathbb R:F_f(u)\ge y_0\}\le u_0. \] 
This means that $\lim_{y\to0}f^\dag(y)=-\infty$.

\medskip

\noindent
{\em Case 2:} $\mathop{\rm ess\,inf}_\Omega f=m\in\mathbb R$. In this case, since $f\in\mathcal{ER}(f)$ a.e., we have $\mu_d\{f\le u\}=0$ for all $u<m$, i.e., $F_f(u)=0$ for all $u<m$. Thus, for every $y\in(0,1)$,
\[ f^\dag(y)=\inf\{u\in\mathbb R:F_f(u)\ge y\}\ge m\quad\implies\quad\lim_{y\to0}f^\dag(y)\ge m. \]
To prove the other inequality, fix $\epsilon>0$. By definition of $m$, we have
\[ \mu_d\{f\le m+\epsilon\}>0\quad\implies\quad F_f(m+\epsilon)=\frac{\mu_d\{f\le m+\epsilon\}}{\mu_d(\Omega)}=\alpha_\epsilon\in(0,1]. \]
Hence, $\alpha_\epsilon/2\in(0,1)$ and
\[ f^\dag(\alpha_\epsilon/2)=\inf\{u\in\mathbb R:F_f(u)\ge\alpha_\epsilon/2\}\le m+\epsilon\quad\implies\quad\lim_{y\to0}f^\dag(y)\le m+\epsilon. \]
Since this is true for all $\epsilon>0$, we conclude that $\lim_{y\to0}f^\dag(y)\le m$.
\end{proof}

\begin{definition}[\textbf{monotone rearrangement}]
\label{def_ext}
Let $f:\Omega\subset\mathbb R^d\to\mathbb R$ be measurable on a set $\Omega$ with $0<\mu_d(\Omega)<\infty$.
The monotone rearrangement of $f$ is the function denoted by $f^\dag$ and defined as follows:
\begin{alignat*}{3}
f^\dag(y)&=\inf\biggl\{u\in\mathbb R:\frac{\mu_d\{f\le u\}}{\mu_d(\Omega)}\ge y\biggr\}, &&\qquad\mbox{if }\,y\in(0,1),\\ 
f^\dag(0)&=\mathop{\rm ess\,inf}_\Omega f, &&\qquad\mbox{if }\,\mathop{\rm ess\,inf}_\Omega f>-\infty,\\ 
f^\dag(1)&=\mathop{\rm ess\,sup}_\Omega f, &&\qquad\mbox{if }\,\mathop{\rm ess\,sup}_\Omega f<\infty. 
\end{alignat*}
The domain of $f^\dag$ is denoted by $\Omega^\dag$ and is always assumed to be the largest possible.
This means that $\Omega^\dag$ always includes $(0,1)$ and it also includes $0$ (resp., $1$) whenever $f^\dag(0)$ (resp., $f^\dag(1)$) is defined, i.e., whenever $\mathop{\rm ess\,inf}_\Omega f>-\infty$ (resp., $\mathop{\rm ess\,sup}_\Omega f<\infty$).
\end{definition}

We remark that, according to Definition~\ref{def_ext} and Lemma~\ref{lemma_ext}, $f^\dag$ is always continuous at $0$ and $1$ whenever it is defined there. In particular, the discontinuity points of $f^\dag$, if any, must lie in $(0,1)$. The next lemma collects some basic properties of monotone rearrangements.

\begin{lemma}\label{lemma_completo}
Let $f:\Omega\subset\mathbb R^d\to\mathbb R$ be measurable on a set $\Omega$ with $0<\mu_d(\Omega)<\infty$.
\begin{enumerate}[nolistsep,leftmargin=*]
	\item $f^\dag$ is monotone increasing and left-continuous on $\Omega^\dag$. 
	\item For every Borel set $A\subseteq\mathbb R$, we have
	\[ \mu_1\{f^\dag\in A\}=\dfrac{\mu_d\{f\in A\}}{\mu_d(\Omega)}. \] 
	\item For every continuous bounded function $F:\mathbb R\to\mathbb R$, we have
	\[ \frac1{\mu_d(\Omega)}\int_\Omega F(f(\xx)){\rm d}\xx=\int_0^1F(f^\dag(y)){\rm d}y. 
	\]
	\item $\mathcal{ER}(f^\dag)=\mathcal{ER}(f)$.
	\item For every $y\in\Omega^\dag$ we have $f^\dag(y)\in\mathcal{ER}(f^\dag)$.
	\item If $f^\dag$ is continuous on $(0,1)$ then $\mathcal{ER}(f^\dag)=f^\dag(\Omega^\dag)$.
	\item $f^\dag$ is continuous on $(0,1)$ if and only if $\mathcal{ER}(f)$ is connected. 
\end{enumerate}
\end{lemma}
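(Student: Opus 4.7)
The plan is to establish the seven items in the order stated, each building on its predecessors. The workhorse is (2), which transfers measure-theoretic questions about $f$ into one-dimensional questions about $f^\dag$. For (1), monotonicity of $f^\dag$ on $(0,1)$ is immediate because $\{u:F_f(u)\ge y\}$ shrinks as $y$ grows, where $F_f(u)=\mu_d\{f\le u\}/\mu_d(\Omega)$; extending to the endpoints in $\Omega^\dag$ preserves monotonicity via Lemma~\ref{lemma_ext}. Using right-continuity of $F_f$ (monotone convergence) one verifies the standard equivalence $f^\dag(y)\le u\iff y\le F_f(u)$, from which left-continuity follows at once. Applying this equivalence to $A=(-\infty,u]$ gives $\mu_1\{f^\dag\le u\}=F_f(u)$, and a monotone-class argument extends the identity to every Borel set $A$, which is (2). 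Item (3) then follows from (2) by the abstract change-of-variables theorem.

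For items (4)--(5): applying (2) with $A=D(z,\epsilon)$ shows that $\mu_1\{f^\dag\in D(z,\epsilon)\}$ and $\mu_d\{f\in D(z,\epsilon)\}$ are simultaneously zero or positive, yielding $\mathcal{ER}(f^\dag)=\mathcal{ER}(f)$. For (5), fix $y\in\Omega^\dag$ and $\epsilon>0$. If $y\in(0,1)$, left-continuity and monotonicity from (1) give a $\delta>0$ with $f^\dag((y-\delta,y])\subseteq(f^\dag(y)-\epsilon,f^\dag(y)]$, so $\mu_1\{f^\dag\in D(f^\dag(y),\epsilon)\}\ge\delta>0$. If $y=0$, then $f^\dag(0)=\mathop{\rm ess\,inf}_\Omega f=\inf\mathcal{ER}(f)$ is finite, and closedness of $\mathcal{ER}(f)$ forces this infimum to lie in $\mathcal{ER}(f)=\mathcal{ER}(f^\dag)$; the case $y=1$ is symmetric.

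For items (6)--(7): the inclusion $f^\dag(\Omega^\dag)\subseteq\mathcal{ER}(f^\dag)$ is (5). Conversely, given $z\in\mathcal{ER}(f^\dag)$, pick $y_n\in\Omega^\dag$ with $f^\dag(y_n)\in D(z,1/n)$; a subsequence converges to some $y_0\in[0,1]$, and continuity of $f^\dag$ on $(0,1)$ together with Lemma~\ref{lemma_ext} at the endpoints yields $f^\dag(y_0)=z$ with $y_0\in\Omega^\dag$ (finiteness of $z$ rules out endpoint values $y_0\in\{0,1\}\setminus\Omega^\dag$, as otherwise $f^\dag(y_n)$ would diverge). For (7)($\Rightarrow$), (4) and (6) combine to give $\mathcal{ER}(f)=\mathcal{ER}(f^\dag)=f^\dag(\Omega^\dag)$, the continuous image of the interval $\Omega^\dag$, hence connected. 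For ($\Leftarrow$), suppose $f^\dag$ jumps at some $y_0\in(0,1)$; by (1), $\alpha:=f^\dag(y_0)=f^\dag(y_0^-)<f^\dag(y_0^+)=:\beta$. Monotonicity forces $f^\dag$ to avoid $(\alpha,\beta)$, so (2) gives $\mu_d\{f\in(\alpha,\beta)\}=0$ and hence $(\alpha,\beta)\cap\mathcal{ER}(f)=\emptyset$; applying (5) to $y_0$ and to a sequence $y\downarrow y_0$ places both $\alpha$ and $\beta$ in $\mathcal{ER}(f^\dag)=\mathcal{ER}(f)$ by closedness, contradicting connectedness.

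The delicate points are (i) the endpoints $y\in\{0,1\}$ in (5) and (6), where one must use Lemma~\ref{lemma_ext} and the closedness of $\mathcal{ER}(f)$ to keep $f^\dag$-values inside the essential range and to guarantee that the limiting point lies in $\Omega^\dag$ whenever $z$ is finite; and (ii) in the ($\Leftarrow$) direction of (7), justifying that \emph{both} jump values $\alpha$ and $\beta$ (not only $\alpha=f^\dag(y_0)$) lie in $\mathcal{ER}(f)$, which requires a short limiting argument from the right together with the fact that the essential range is closed.
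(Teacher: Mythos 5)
Your proposal is correct and follows essentially the same route as the paper's proof: item (2) serves as the measure-transfer workhorse, (4) is derived from it, (5) rests on left-continuity (plus the endpoint behaviour), (6) uses (5) together with closedness of $f^\dag(\Omega^\dag)$, and (7) combines (4)--(6) in one direction and a jump/disconnection argument in the other. The only differences are cosmetic: you sketch direct proofs of (1)--(3) where the paper cites Fristedt--Gray; you handle the endpoints in (5) via closedness of $\mathcal{ER}(f)$ rather than the paper's continuity-at-$0,1$ remark; your converse inclusion in (6) uses a sequential-compactness argument instead of the paper's observation that $f^\dag(\Omega^\dag)$ is closed; and in (7)($\Leftarrow$) you place both jump endpoints $\alpha,\beta$ in $\mathcal{ER}(f)$, whereas the paper places a middle point of the gap outside $\mathcal{ER}(f)$ and two flanking values $f^\dag(u),f^\dag(v)$ inside it --- both yield disconnectedness.
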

\begin{proof}
1--2. See \cite[Chapter~3, Proposition~4 and Problem~3]{Fristedt}.

3. See \cite[Chapter~14, Proposition~7]{Fristedt}.

4. By definition of $\mathcal{ER}(f)$ and property~2,
\begin{align*}
\mathcal{ER}(f)&=\{x\in\mathbb R:\mu_d\{f\in D(x,\epsilon)\}>0\mbox{ for all }\epsilon>0\}\\
&=\{x\in\mathbb R:\mu_d\{f^\dag\in D(x,\epsilon)\}>0\mbox{ for all }\epsilon>0\}=\mathcal{ER}(f^\dag).
\end{align*}

5. Let $y\in\Omega^\dag$. 
$f^\dag$ is left-continuous at $y$ (by property~1), and it is continuous at $y$ if $y=0$ or $y=1$ (because $f^\dag$ is always continuous at $0$ and $1$ whenever it is defined there). Thus, we have $\mu_1\{f^\dag\in D(f^\dag(y),\epsilon)\}>0$ for every $\epsilon>0$. 
Hence, $y\in\mathcal{ER}(f^\dag)$.

6. Suppose that $f^\dag$ is continuous on $(0,1)$. This means that $f^\dag$ is continuous on $\Omega^\dag$, since $f^\dag$ is always continuous at $0$ and $1$ whenever it is defined there. By property~5, we infer that $f^\dag(\Omega^\dag)\subseteq\mathcal{ER}(f^\dag)$. On the other hand, $\mathcal{ER}(f^\dag)\subseteq\overline{f^\dag(\Omega^\dag)}=f^\dag(\Omega^\dag)$, where the latter equality follows from the fact that $f^\dag(\Omega^\dag)$ is closed by definition of $\Omega^\dag$ and the continuity and monotonicity of $f^\dag$. Thus, $\mathcal{ER}(f^\dag)=f^\dag(\Omega^\dag)$.

7. $(\implies)$ Suppose that $f^\dag$ is continuous on $(0,1)$, i.e., continuous on $\Omega^\dag$. By properties~4 and~6, we have
\[ \mathcal{ER}(f)=\mathcal{ER}(f^\dag)=f^\dag(\Omega^\dag). \]
In particular, $\mathcal{ER}(f)$ is connected because it is equal to the image of a connected set $\Omega^\dag$ through a continuous function $f^\dag$; see \cite[Theorem~4.22]{Rudinino}.

$(\impliedby)$ Suppose that $f^\dag$ is not continuous on $(0,1)$. Then, there exists for $f^\dag$ a discontinuity point $y_0\in(0,1)$, which is necessarily a jump because $f^\dag$ is monotone increasing. In particular, we can find a point $\alpha$ in the open jump $(f^\dag_-(y_0),f^\dag_+(y_0))$, where $f^\dag_-(y_0)$ and $f^\dag_+(y_0)$ are the left and right limits of $f^\dag$ in $y_0$ (note that $f^\dag_-(y_0)=f^\dag(y_0)$ by the left-continuity of $f^\dag$; see property~1). Obviously, $\alpha\not\in\mathcal{ER}(f^\dag)$. We show that $\alpha$ disconnects $\mathcal{ER}(f^\dag)$. Take two 
points $u,v\in(0,1)$  
such that $u<y_0<v$. 
By property~5, we have $f^\dag(u),f^\dag(v)\in\mathcal{ER}(f^\dag)$. Moreover, by monotonicity, $f^\dag(u)<\alpha<f^\dag(v)$. Hence, $\mathcal{ER}(f^\dag)=\mathcal{ER}(f)$ is not connected.
\end{proof}

The main results we need on monotone rearrangements are Theorems~\ref{th:mainEM} and~\ref{th:mainEM'}.
For the proof of Theorem~\ref{th:mainEM}, see \cite[Theorem~3.1]{EM}.
Theorem~\ref{th:mainEM'} is a generalization of \cite[Theorem~3.2]{EM} and is proved below.

\begin{theorem}\label{th:mainEM}
Let $f:\Omega\subset\mathbb R^d\to\mathbb R$ be continuous a.e.\ on the regular set $\Omega$ with $\mu_d(\Omega)>0$.
Take any $d$-dimensional rectangle $[\aa,\bb]$ containing $\Omega$ and any a.u.\ grid $\mathcal G_\nn^{(n)}=\{\xx_{\ii,\nn}^{(n)}\}_{\ii=\bu,\ldots,\nn}$ in $[\aa,\bb]$, where $\nn=\nn(n)\in\mathbb N^d$ and $\nn\to\infty$ as $n\to\infty$. 
For every $n$, consider the samples
\begin{equation*}
f(\xx_{\ii,\nn}^{(n)}),\qquad\ii\in\II_\nn^{(n)}(\Omega)=\{\ii\in\{\bu,\ldots,\nn\}:\xx_{\ii,\nn}^{(n)}\in\Omega\},
\end{equation*}
sort them in increasing order, and put them into a vector $[s_0,\ldots,s_{\omega(n)}]$, where $\omega(n)=|\II_\nn^{(n)}(\Omega)|-1$.
Let $f^\dag_n:[0,1]\to\mathbb R$ be the linear spline function that interpolates the samples $[s_0,\ldots,s_{\omega(n)}]$ over the equally spaced nodes $[0,\frac{1}{\omega(n)},\frac{2}{\omega(n)},\ldots,1]$ in $[0,1]$. Then,
\[ \lim_{n\to\infty} f^\dag_n(y)=f^\dag(y) \]
for every continuity point $y$ of $f^\dag$ in $(0,1)$. 
\end{theorem}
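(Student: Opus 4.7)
The plan is to interpret $f^\dag$ as the quantile function of $f$ viewed as a random variable on $\Omega$, to recognize the sorted samples $s_0\le\ldots\le s_{\omega(n)}$ as encoding the empirical quantile function of the point cloud $\{f(\xx_{\ii,\nn}^{(n)}):\ii\in\II_\nn^{(n)}(\Omega)\}$, and to deduce the desired pointwise convergence from the classical fact that convergence of distribution functions at continuity points passes to the associated generalized inverses.

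First, I would establish weak convergence of the empirical distributions to the law of $f$: for every continuous bounded $F:\mathbb R\to\mathbb R$,
\[ \frac{1}{|\II_\nn^{(n)}(\Omega)|}\sum_{\ii\in\II_\nn^{(n)}(\Omega)} F(f(\xx_{\ii,\nn}^{(n)}))\longrightarrow\frac{1}{\mu_d(\Omega)}\int_\Omega F(f(\xx)){\rm d}\xx=\int_0^1 F(f^\dag(y)){\rm d}y, \]
where the last equality is Lemma~\ref{lemma_completo}(3). This is a Riemann-sum argument: since $\Omega$ is regular, the function $(F\circ f)\chi_\Omega$ is bounded and continuous a.e.\ on $[\aa,\bb]$, so its Riemann sums on the a.u.\ grid $\mathcal G_\nn^{(n)}$ converge to the Lebesgue integral, and normalizing by the analogous sum for $F\equiv1$ on $\Omega$ supplies the factor $1/\mu_d(\Omega)$. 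Sandwiching $\chi_{(-\infty,u]}$ between continuous bounded functions (a standard Portmanteau-type step) then yields
\[ F_n(u):=\frac{|\{\ii\in\II_\nn^{(n)}(\Omega):f(\xx_{\ii,\nn}^{(n)})\le u\}|}{|\II_\nn^{(n)}(\Omega)|}\longrightarrow F_f(u):=\frac{\mu_d\{f\le u\}}{\mu_d(\Omega)} \]
at every continuity point $u$ of $F_f$.

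Next, since $f^\dag$ is the generalized inverse of $F_f$, the classical lemma on inverses of monotone functions should transfer continuity-point CDF convergence into the quantile convergence $Q_n(y_n)\to f^\dag(y)$ for every sequence $y_n\to y\in(0,1)$ with $y$ a continuity point of $f^\dag$, where $Q_n(y):=\inf\{u\in\mathbb R:F_n(u)\ge y\}$. Writing $D_n=\omega(n)+1$, the step function $Q_n$ takes the value $s_k$ on $(k/D_n,(k+1)/D_n]$, so $s_k=Q_n((k+1)/D_n)$ and $s_{k+1}=Q_n((k+2)/D_n)$. Fixing a continuity point $y\in(0,1)$ of $f^\dag$ and choosing $k=k(n)$ with $k/\omega(n)\le y\le(k+1)/\omega(n)$, both $(k+1)/D_n$ and $(k+2)/D_n$ tend to $y$, hence $s_k,s_{k+1}\to f^\dag(y)$; since $f^\dag_n(y)$ is by construction a convex combination of $s_k$ and $s_{k+1}$, this gives $f^\dag_n(y)\to f^\dag(y)$. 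The hardest part will be this quantile-transfer step, since the CDF convergence is only guaranteed at continuity points of $F_f$ while one needs quantile convergence along arbitrary approaching sequences $y_n\to y$; it will be handled by a sandwich exploiting the monotonicity of $Q_n$ together with the fact that $f^\dag$, being monotone, has at most countably many discontinuities in $(0,1)$, so $y$ can be approached from both sides through points at which quantile convergence is immediate from the CDF convergence.
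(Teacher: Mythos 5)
The paper does not actually contain a proof of this theorem; it is quoted as \cite[Theorem~3.1]{EM}, so a line-by-line comparison is not possible. That said, your route is correct, and it is the same in substance as the machinery the paper imports from that reference: the Riemann-sum step is precisely what makes the a.u.-grid samples weakly converge to the push-forward law of $f$, and your CDF-to-quantile transfer is the probabilistic rephrasing of the monotone-function convergence lemma the paper quotes as Lemma~\ref{ultimo}. Two points worth tightening in a write-up: (i) the Riemann-sum convergence over an a.u.\ grid is not literally a tagged-partition statement because the nodes $\xx_{\ii,\nn}^{(n)}$ may lie slightly outside their cells; one clean way to justify it is to set $g_n\equiv g(\xx_{\ii,\nn}^{(n)})$ on the $\ii$th subrectangle of $[\aa,\bb]$, observe that $g_n\to g$ at every continuity point of $g$ because $m(\mathcal G_\nn^{(n)})\to0$ and the cell diameter $\to0$, and conclude by dominated convergence. (ii) The classical quantile-convergence lemma gives $Q_n(y)\to f^\dag(y)$ only at the fixed continuity point $y$; the strengthened form $Q_n(y_n)\to f^\dag(y)$ for $y_n\to y$ is exactly the sandwich you sketch (monotonicity of $Q_n$, density of the continuity points of $f^\dag$, and continuity of $f^\dag$ at $y$), and it is needed because $s_{k(n)}=Q_n((k(n)+1)/D_n)$ involves a moving argument. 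With those two details filled in, the proof is complete.
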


\begin{theorem}\label{th:mainEM'}
In Theorem~{\rm\ref{th:mainEM}}, suppose that $\mathcal{ER}(f)$ is connected. Then, the following properties hold.
\begin{enumerate}[nolistsep,leftmargin=*]
	\item $f_n^\dag\to f^\dag$ uniformly on every compact interval $[\alpha,\beta]\subset(0,1)$.
	\item If $f$ is bounded from below on $\Omega$ with $\inf_\Omega f=\mathop{\rm ess\,inf}_\Omega f$, then $f_n^\dag\to f^\dag$ uniformly on every compact interval $[0,\alpha]\subset[0,1)$.
	\item If $f$ is bounded from above on $\Omega$ with $\sup_\Omega f=\mathop{\rm ess\,sup}_\Omega f$, then $f_n^\dag\to f^\dag$ uniformly on every compact interval $[\alpha,1]\subset(0,1]$.
	\item If $f$ is bounded on $\Omega$ and $\mathcal{ER}(f)=[\inf_\Omega f,\sup_\Omega f]$, then $f_n^\dag\to f^\dag$ uniformly on $[0,1]$.
\end{enumerate}
\end{theorem}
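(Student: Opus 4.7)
My approach rests on combining three ingredients: (a) the pointwise convergence $f_n^\dag(y)\to f^\dag(y)$ on every continuity point of $f^\dag$ in $(0,1)$, which is exactly Theorem~\ref{th:mainEM}; (b) the monotonicity of each $f_n^\dag$ (it interpolates a sorted sequence on an equally spaced grid) and of $f^\dag$ (Lemma~\ref{lemma_completo}.1); and (c) the continuity of $f^\dag$ on $(0,1)$, which is exactly the content of Lemma~\ref{lemma_completo}.7 under the standing hypothesis that $\mathcal{ER}(f)$ is connected. Together with Lemma~\ref{lemma_ext} and Definition~\ref{def_ext}, ingredient (c) also gives that $f^\dag$ is continuous at $0$ whenever $\mathop{\rm ess\,inf}_\Omega f>-\infty$ and at $1$ whenever $\mathop{\rm ess\,sup}_\Omega f<\infty$.

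To prove part~1, I invoke a P\'olya-type argument: a sequence of monotone functions that converges pointwise on a compact interval to a continuous limit converges uniformly there. Concretely, given $\epsilon>0$, partition $[\alpha,\beta]\subset(0,1)$ as $\alpha=y_0<y_1<\cdots<y_m=\beta$ with $f^\dag(y_{k+1})-f^\dag(y_k)<\epsilon$ for all $k$ (possible by uniform continuity of $f^\dag$ on $[\alpha,\beta]$), apply Theorem~\ref{th:mainEM} at each $y_k$ to obtain $N$ with $|f_n^\dag(y_k)-f^\dag(y_k)|<\epsilon$ for all $n\ge N$ and all $k$, and then sandwich $f_n^\dag(y)$ on each subinterval $[y_k,y_{k+1}]$ between $f_n^\dag(y_k)$ and $f_n^\dag(y_{k+1})$, yielding $|f_n^\dag(y)-f^\dag(y)|\le 2\epsilon$ uniformly. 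This covers part~1 directly.

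For parts~2--4, the only additional step is to extend the pointwise convergence to the endpoints of $\Omega^\dag$, after which the same P\'olya argument on the larger interval yields the uniform convergence. For part~2, $f_n^\dag(0)$ equals the smallest sample $s_0=\min\{f(\xx_{\ii,\nn}^{(n)}):\ii\in\II_\nn^{(n)}(\Omega)\}$, which lies in $[\inf_\Omega f,\sup_\Omega f]$ since the grid points indexed by $\II_\nn^{(n)}(\Omega)$ belong to $\Omega$; hence $f_n^\dag(0)\ge\inf_\Omega f=f^\dag(0)$ using the hypothesis. For the reverse, fix any $y\in(0,1)$ and use monotonicity of $f_n^\dag$ together with Theorem~\ref{th:mainEM} to get $\limsup_n f_n^\dag(0)\le\lim_n f_n^\dag(y)=f^\dag(y)$; letting $y\to 0^+$ and exploiting continuity of $f^\dag$ at~$0$ yields $\limsup_n f_n^\dag(0)\le f^\dag(0)$, hence convergence at~$0$. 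An entirely symmetric argument handles part~3 using $f_n^\dag(1)=s_{\omega(n)}\le\sup_\Omega f=f^\dag(1)$ and $\liminf_n f_n^\dag(1)\ge f^\dag(y)$ for every $y\in(0,1)$. Part~4 then follows by combining parts~2 and~3 on $[0,1]$.

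The main obstacle is psychological rather than technical: one must be careful that the P\'olya-type sandwich argument uses only the monotonicity of $f_n^\dag$ and the (uniform) continuity of the limit $f^\dag$, rather than any regularity of $f_n^\dag$ itself, and that the boundary convergence is genuinely a consequence of the hypothesis $\inf_\Omega f=\mathop{\rm ess\,inf}_\Omega f$ (respectively $\sup_\Omega f=\mathop{\rm ess\,sup}_\Omega f$), which is precisely what allows identifying $\inf_\Omega f$ with $f^\dag(0)$ and forces the smallest sample $s_0$ to approach this common value.
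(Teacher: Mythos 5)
Your proof is correct and follows essentially the same route as the paper's: establish pointwise convergence of $f_n^\dag$ to $f^\dag$ (extending to the endpoints via the one-sided bound $f_n^\dag(0)\ge f^\dag(0)$ and the $\limsup$ argument through interior points), and then upgrade to uniform convergence using the fact that a pointwise-convergent sequence of monotone functions with continuous limit on a compact interval converges uniformly. The only cosmetic difference is that you spell out that last upgrade (the P\'olya-type sandwich) explicitly, whereas the paper cites it as Lemma~\ref{Dini} (Dini's second theorem).
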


Recall from Remark~\ref{ER(f)=[inf,sup]} that the assumption $\inf_\Omega f=\mathop{\rm ess\,inf}_\Omega f$ in the second statement is equivalent to $\inf_\Omega f\ge\mathop{\rm ess\,inf}_\Omega f$ and the assumption $\sup_\Omega f=\mathop{\rm ess\,sup}_\Omega f$ in the third statement is equivalent to $\sup_\Omega f\le\mathop{\rm ess\,sup}_\Omega f$. Moreover, under the hypothesis that $\mathcal{ER}(f)$ is connected and bounded,
the assumption $\mathcal{ER}(f)=[\inf_\Omega f,\sup_\Omega f]$ in the fourth statement is equivalent to asking that $\inf_\Omega f=\mathop{\rm ess\,inf}_\Omega f$ and $\sup_\Omega f=\mathop{\rm ess\,sup}_\Omega f$, which in turn is equivalent to asking that $f(\Omega)\subseteq\mathcal{ER}(f)$, i.e., $\overline{f(\Omega)}=\mathcal{ER}(f)$.
The proof of Theorem~\ref{th:mainEM'} relies on the following lemma, which is sometimes referred to as Dini's second theorem \cite[pp.~81 and~270, Problem~127]{Polya-Szego}.

\begin{lemma}\label{Dini}
If a sequence of monotone functions converges pointwise on a compact interval to a continuous function, then it converges uniformly.
\end{lemma}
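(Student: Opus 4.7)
The plan is a standard sandwich argument that exploits the monotonicity of each $f_n$ together with the uniform continuity of the limit $f$ on the compact interval, which we denote by $[a,b]$. Without loss of generality I would assume that every $f_n$ is monotone in the same direction (say, increasing), since otherwise one can split $\{f_n\}_n$ into the sub-sequence of increasing terms and the sub-sequence of decreasing terms, and treat each separately (the proof is symmetric in the two cases, and uniform convergence of both sub-sequences implies uniform convergence of the whole sequence).

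Fix $\epsilon>0$. First, since $f$ is continuous on the compact set $[a,b]$, it is uniformly continuous, so there exists $\delta>0$ with $|f(x)-f(y)|<\epsilon/3$ whenever $|x-y|<\delta$. Next, I would choose a finite partition $a=x_0<x_1<\cdots<x_k=b$ of mesh less than $\delta$. By the pointwise convergence $f_n\to f$ applied at the finitely many nodes $x_0,\ldots,x_k$, there exists $N$ such that
\[ |f_n(x_i)-f(x_i)|<\epsilon/3\qquad\text{for all }n\ge N\text{ and all }i=0,\ldots,k. \]

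For an arbitrary $x\in[a,b]$ and $n\ge N$, I would locate the subinterval $[x_{i-1},x_i]\ni x$ and use the monotonicity of $f_n$ to get
\[ f_n(x_{i-1})\le f_n(x)\le f_n(x_i). \]
Subtracting $f(x)$ and applying the triangle inequality, I would bound $f_n(x)-f(x)$ above by $f_n(x_i)-f(x)\le |f_n(x_i)-f(x_i)|+|f(x_i)-f(x)|<2\epsilon/3$, and bound $f(x)-f_n(x)$ analogously using $x_{i-1}$, yielding $|f_n(x)-f(x)|<\epsilon$ for every $x\in[a,b]$ and every $n\ge N$. Since $N$ is independent of $x$, this gives uniform convergence on $[a,b]$.

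The only conceptual point requiring any thought is the initial reduction to a common direction of monotonicity; once this is handled, the rest is the routine sandwich estimate and no real obstacle is expected.
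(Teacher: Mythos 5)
Your proof is correct and is the standard argument for Dini's second theorem: reduce to a common direction of monotonicity, exploit uniform continuity of the limit to get a fine finite partition, use pointwise convergence at the finitely many nodes, and sandwich an arbitrary point via the monotonicity of $f_n$. The paper does not prove this lemma itself but only cites P\'olya--Szeg\H o, so there is no competing proof to compare against; your argument fills in exactly the expected details, and the reduction to the case where all $f_n$ are increasing (split the index set into increasing and decreasing terms and treat each subsequence separately) is handled correctly.
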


\begin{proof}[Proof of Theorem~{\rm\ref{th:mainEM'}}]\hfill
\begin{enumerate}[nolistsep,leftmargin=*]
	\item $f^\dag$ is continuous on $(0,1)$ by Lemma~\ref{lemma_completo} and $f_n^\dag\to f^\dag$ everywhere in $(0,1)$ by Theorem~\ref{th:mainEM}. Since the functions $f_n^\dag$ are continuous and increasing on $(0,1)$, the thesis follows from Lemma~\ref{Dini}.
	\item Since $f$ is bounded from below on $\Omega$, we have $\mathop{\rm ess\,inf}_\Omega f>-\infty$ and $f^\dag(0)=\lim_{y\to0}f^\dag(y)=\mathop{\rm ess\,inf}_\Omega f$ is defined. 
	Moreover, the function $f^\dag$ is continuous on $[0,1)$ by Lemma~\ref{lemma_completo} and the definition $f^\dag(0)=\lim_{y\to0}f^\dag(y)$.
	Since $f_n^\dag(0)=s_0$ is the evaluation of $f$ at a point of $\Omega$ and $\inf_\Omega f=\mathop{\rm ess\,inf}_\Omega f$ by assumption, for every $n$ we have 
	\[ f_n^\dag(0)\ge{\rm inf}_\Omega f=\mathop{\rm ess\,inf}_\Omega f=f^\dag(0). \]
	Since $f^\dag_n\to f^\dag$ everywhere in $(0,1)$ by Theorem~\ref{th:mainEM} and the functions $f_n^\dag$, $f^\dag$ are continuous and increasing on $[0,1)$, for every $\epsilon>0$ we have
	\[ f^\dag(0)\le\liminf_{n\to\infty}f_n^\dag(0)\le\limsup_{n\to\infty}f_n^\dag(0)\le\limsup_{n\to\infty}f_n^\dag(\epsilon)=f^\dag(\epsilon), \]
	hence
	\[ f^\dag(0) = \lim_{n\to\infty} f_n^\dag(0). \]
	We have therefore proved that $f_n^\dag\to f^\dag$ everywhere in $[0,1)$, and the thesis now follows from Lemma~\ref{Dini}.
	\item It is proved in the same way as the second statement.
	\item It follows immediately from the second and third statements. \qedhere
\end{enumerate}
\end{proof}

\subsection{Proof of Theorem~\ref{th:main4-n}}

The last result we need to prove Theorem~\ref{th:main4-n} is the following technical lemma \cite[Lemma~3.3]{EM}.

\begin{lemma}\label{ultimo}
Let $\omega(n)$ be a sequence of positive integers such that $\omega(n)\to\infty$ and let $g_n:[0,1]\to\mathbb R$ be a sequence of increasing functions such that
\[ \lim_{n\to\infty}\frac1{\omega(n)}\sum_{\ell=0}^{\omega(n)}F\Bigl(g_n\Bigl(\frac\ell{\omega(n)}\Bigr)\Bigr)=\int_0^1F(g(y)){\rm d}y,\qquad\forall\,F\in C_c(\mathbb R), \]
where $g:(0,1)\to\mathbb R$ is increasing. Then, $g_n(y)\to g(y)$ for every continuity point $y$ of $g$ in $(0,1)$.
\end{lemma}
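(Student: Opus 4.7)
The plan is to read the hypothesis as vague convergence of push-forward measures on $\mathbb R$, extract a pointwise Helly-type limit of the monotone step-function rearrangements of $g_n$, identify it with $g$ via the properties collected in Lemma~\ref{lemma_completo}, and finally recover the pointwise convergence of $g_n$ itself by a monotonicity sandwich.

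First, I would introduce the step function $h_n:[0,1]\to\mathbb R$ given by $h_n(z)=g_n(\ell/\omega(n))$ for $z\in[\ell/\omega(n),(\ell+1)/\omega(n))$, $\ell=0,\ldots,\omega(n)-1$, with $h_n(1)=g_n(1)$. Then $h_n$ is increasing, and the hypothesis rewrites, after absorbing the vanishing boundary contribution $F(g_n(1))/\omega(n)\to0$, as
\[ \int_0^1F(h_n(z))\,{\rm d}z\;\longrightarrow\;\int_0^1F(g(y))\,{\rm d}y,\qquad\forall\,F\in C_c(\mathbb R). \]
In other words, the probability measures obtained from Lebesgue measure on $[0,1]$ via the increasing maps $h_n$ converge vaguely to the analogous measure associated with $g$.

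Second, I would argue along subsequences. Using test functions in $C_c(\mathbb R)$ located near $\pm\infty$ to exclude mass escape, one obtains uniform bounds on $h_n$ over compact subintervals of $(0,1)$; Helly's selection theorem then allows any subsequence to be refined to one along which $h_n$ converges pointwise on a co-countable subset of $(0,1)$ to some increasing function $\tilde g$. Passing to the limit along this sub-subsequence in the displayed identity (dominated convergence, justified by the compact support of $F$ and the local bounds on $h_n$) gives
\[ \int_0^1F(\tilde g(y))\,{\rm d}y=\int_0^1F(g(y))\,{\rm d}y,\qquad\forall\,F\in C_c(\mathbb R). \]
By Lemma~\ref{lemma_completo}, this forces $\tilde g$ and $g$ to share the same push-forward measure and essential range, hence (both being increasing) to agree at every common continuity point. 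Therefore $\tilde g(y)=g(y)$ at every continuity point $y\in(0,1)$ of $g$. Since every convergent sub-subsequence of $\{h_n(y)\}$ produces this same limit, the whole sequence satisfies $h_n(y)\to g(y)$.

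Third, I would sandwich $g_n(y)$ using monotonicity. With $\ell_n=\lfloor y\omega(n)\rfloor$,
\[ h_n(y)=g_n(\ell_n/\omega(n))\le g_n(y)\le g_n((\ell_n+1)/\omega(n))=h_n(y+1/\omega(n)). \]
The left side converges to $g(y)$ by the previous step. For the right, fix $\varepsilon>0$ and pick a continuity point $y'>y$ of $g$ with $g(y')<g(y)+\varepsilon$ (continuity points of a monotone function are dense); for $n$ large, $y+1/\omega(n)<y'$, and the monotonicity of $h_n$ combined with the previous step gives
\[ g(y)\le\liminf_{n\to\infty}h_n(y+1/\omega(n))\le\limsup_{n\to\infty}h_n(y+1/\omega(n))\le g(y')<g(y)+\varepsilon. \]
Letting $\varepsilon\to0$ closes the sandwich and yields $g_n(y)\to g(y)$. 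The main obstacle is Step~2, where promoting Riemann-sum convergence against $C_c(\mathbb R)$ to a genuine pointwise identification of two increasing functions requires both a tightness argument (to avoid mass escape to $\pm\infty$) and the uniqueness of the monotone rearrangement associated with a distribution; both pieces are supplied by Lemma~\ref{lemma_completo}.
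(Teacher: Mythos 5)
The paper itself does not prove this lemma; it is imported from \cite[Lemma~3.3]{EM}, so there is no in-paper argument to compare against. Your Helly-based route --- pass to the right-continuous step functions $h_n$, extract a subsequential limit $\tilde g$, identify $\tilde g=g$ at continuity points via the vague convergence, then sandwich $g_n(y)$ between $h_n(y)$ and $h_n(y+1/\omega(n))$ --- is a workable strategy and does reach the conclusion. However, the two steps that carry the real weight are left as gestures rather than arguments.

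(i) For Helly to return a finite increasing limit on a compact $[\alpha,\beta]\subset(0,1)$ you must establish that the $h_n$ are uniformly bounded there; this is the tightness step and it is not automatic. It should be made explicit: if $h_{n_k}(\beta)\to+\infty$ along a subsequence, take $F_M\in C_c(\mathbb R)$ equal to $1$ on $[-M,M]$ and $0$ off $[-M-1,M+1]$; for infinitely many $k$ the whole interval $[\beta,1]$ is mapped above $M+1$, so $\int_0^1F_M(h_{n_k})\le\beta$, hence $\int_0^1F_M(g)\le\beta$ for every $M$, contradicting $\int_0^1F_M(g)\to1$ as $M\to\infty$. The symmetric argument at $\alpha$ controls escape to $-\infty$. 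Without this, Helly can produce $\pm\infty$ values and your dominated-convergence passage fails. (ii) The appeal to Lemma~\ref{lemma_completo} for the identification $\tilde g=g$ is not the right hook: that lemma relates a function to \emph{its own} monotone rearrangement, not two different increasing functions to each other. The fact you actually need --- that two increasing functions on $(0,1)$ whose integrals against every $F\in C_c(\mathbb R)$ agree have the same discontinuity set and coincide at every common continuity point, both being equal there to the quantile function of the shared push-forward measure --- is true and short, but it must be stated and proved rather than attributed to Lemma~\ref{lemma_completo}. Once these two points are supplied, the subsequence principle and the final monotone sandwich (choosing a nearby continuity point $y'>y$ to control $h_n(y+1/\omega(n))$) close the proof correctly.
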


\begin{proof}[Proof of Theorem~{\rm\ref{th:main4-n}}]
We begin with the following observation: since $\mathcal{ER}(f)$ is connected and bounded by assumption, the domain of $f^\dag$ is $\Omega^\dag=[0,1]$ and $f^\dag$ is continuous on $[0,1]$; see Lemma~\ref{lemma_completo} (property~7) and Definition~\ref{def_ext}.

Sort the samples $\{f(\xx_{\ii,\nn}^{(n)}):\ii\in\II_\nn^{(n)}(\Omega)\}$ in increasing order through a permutation $\sigma_n$ as in the statement of the theorem, and put them into a vector $[s_0,\ldots,s_{\omega(n)}]$, where $\omega(n)=|\II_\nn^{(n)}(\Omega)|-1$. Note that 
$[s_0,\ldots,s_{\omega(n)}]=[f(\xx_{\sigma_n(\ii),\nn})]_{\ii\in\II_\nn^{(n)}(\Omega)}$. 
Let $f^\dag_n:[0,1]\to\mathbb R$ be the linear spline function that interpolates the samples $[s_0,\ldots,s_{\omega(n)}]$ over the equally spaced nodes $[0,\frac{1}{\omega(n)},\frac{2}{\omega(n)},\ldots,1]$ in $[0,1]$. By Theorem~\ref{th:mainEM'}, 
\begin{equation}\label{fdag_c1-n}
f_n^\dag\to f^\dag\ \mbox{uniformly on $[0,1]$ as}\ n\to\infty.
\end{equation}

Sort the real numbers $\{\lambda_{\ii,\nn}^{(n)}:\ii\in\II_\nn^{(n)}(\Omega)\}$ in increasing order through a permutation $\tau_n$ as in the statement of the theorem, and put them into a vector $[t_0,\ldots,t_{\omega(n)}]$. Note that 
$[t_0,\ldots,t_{\omega(n)}]=[\lambda_{\tau_n(\ii),\nn}^{(n)}]_{\ii\in\II_\nn^{(n)}(\Omega)}$.
Let $g_n:[0,1]\to\mathbb R$ be the linear spline function that interpolates the samples $[t_0,\ldots,t_{\omega(n)}]$ over the equally spaced nodes $[0,\frac{1}{\omega(n)},\frac{2}{\omega(n)},\ldots,1]$ in $[0,1]$. Since $\{\{\lambda_{\ii,\nn}^{(n)}:\ii\in\II_\nn^{(n)}(\Omega)\}\}_n\sim f^\dag$ by the assumption $\{\{\lambda_{\ii,\nn}^{(n)}:\ii\in\II_\nn^{(n)}(\Omega)\}\}_n\sim f$ and Lemma~\ref{lemma_completo} (property~3), the hypotheses of Lemma~\ref{ultimo} are satisfied with  
$g=f^\dag$. Since $f^\dag$ is continuous on $[0,1]$, we infer that $g_n(y)\to f^\dag(y)$ for every $y\in(0,1)$. Moreover, $g_n(y)\to f^\dag(y)$ also for $y=0,1$. Indeed, for every $n$ we have 
\[ g_n(0)\ge f^\dag(0)-\epsilon_n, \]
because on the one hand $f^\dag(0)=\mathop{\rm ess\,inf}_\Omega f=\inf_\Omega f$ by our assumption $\mathcal{ER}(f)=[\inf_\Omega f,\sup_\Omega f]$, and on the other hand $\{\lambda_{\ii,\nn}^{(n)}:\ii\in\II_\nn^{(n)}(\Omega)\}\subseteq[\inf_\Omega f-\epsilon_n,\sup_\Omega f+\epsilon_n]$ by hypothesis.
Thus, for every $\epsilon>0$ we have
\[ f^\dag(0)\le\liminf_{n\to\infty}g_n(0)\le\limsup_{n\to\infty}g_n(0)\le\limsup_{n\to\infty}g_n(\epsilon)=f^\dag(\epsilon), \]
and so, by the continuity of $f^\dag$ at $0$,
\[ \lim_{n\to\infty}g_n(0)=f^\dag(0). \]
The same argument shows that $g_n(1)\to f^\dag(1)$ as $n\to\infty$. We conclude that $g_n(y)\to f^\dag(y)$ for all $y\in[0,1]$ and so, by Lemma~\ref{Dini},
\begin{equation}\label{fdag_c2-n}
g_n\to f^\dag\ \mbox{uniformly on $[0,1]$ as}\ n\to\infty.
\end{equation}
By combining \eqref{fdag_c1-n} and \eqref{fdag_c2-n}, we obtain that $\|f^\dag_n-g_n\|_{\infty,[0,1]}\to0$ as $n\to\infty$. In particular,
\[ \max_{i=0,\ldots,\omega(n)}\left|f^\dag_n\biggl(\frac i{\omega(n)}\biggr)-g_n\biggl(\frac i{\omega(n)}\biggr)\right|=\max_{i=0,\ldots,\omega(n)}|s_i-t_i|=\max_{\ii\in\II_\nn^{(n)}(\Omega)}|f(\xx_{\sigma_n(\ii),\nn}^{(n)})-\lambda_{\tau_n(\ii),\nn}^{(n)}|\to0\ \,\mbox{as}\ \,n\to\infty, \]
which proves the theorem.
\end{proof}

\subsection{Properties of a.u.\ grids}

In this section, we collect the properties of a.u.\ grids that we need in the proof of Theorem~\ref{thm:mmd}.
We begin with the following general result on real vectors. If $A\in\mathbb C^{m\times m}$, we denote by $\|A\|=\max(\sigma_1(A),\ldots,\sigma_m(A))$ the spectral (or Euclidean) norm of $A$.

\begin{lemma}\label{gelemma}
Let $\xx,\yy\in\mathbb R^m$ and let $\sigma,\tau$ be permutations of $\{1,\ldots,m\}$ that sort the components of $\xx,\yy$ in increasing order, i.e., $x_{\sigma(1)}\le\ldots\le x_{\sigma(m)}$ and $y_{\tau(1)}\le\ldots\le y_{\tau(m)}$. Then,
\begin{equation}\label{geresult}
\max_{i=1,\ldots,m}|x_{\sigma(i)}-y_{\tau(i)}|\le\max_{i=1,\ldots,m}|x_i-y_i|.
\end{equation}
\end{lemma}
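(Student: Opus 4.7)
\smallskip

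\noindent\textbf{Proof plan for Lemma~\ref{gelemma}.}
The plan is to reduce to the case where $\xx$ is already sorted and then use a bubble-sort argument, showing that each adjacent out-of-order swap of the $\yy$-entries can only decrease (or preserve) the $\ell^\infty$-distance to $\xx$.

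\smallskip

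\noindent\emph{Step 1 (reduction to sorted $\xx$).} I would relabel indices using $\sigma$, setting $x'_i=x_{\sigma(i)}$ and $y'_i=y_{\sigma(i)}$, so that $\xx'$ is increasing. Since $\sigma$ is a bijection, $\max_i|x_i-y_i|=\max_i|x'_i-y'_i|$, so the RHS of \eqref{geresult} is unchanged. Writing $\tau'=\sigma^{-1}\circ\tau$, one checks that $\tau'$ sorts $\yy'$ in increasing order and that the LHS equals $\max_i|x'_i-y'_{\tau'(i)}|$. Thus it suffices to prove: if $x_1\le x_2\le\cdots\le x_m$ and $\pi$ sorts $\yy$ in increasing order, then
\[ \max_i|x_i-y_{\pi(i)}|\le\max_i|x_i-y_i|. \]

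\smallskip

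\noindent\emph{Step 2 (the key two-index swap lemma).} The technical core is the elementary inequality: if $a\le b$ and $c\le d$, then
\[ \max(|a-c|,|b-d|)\le\max(|a-d|,|b-c|). \]
I would prove this by splitting into the cases $a\le c$ versus $a>c$ to bound $|a-c|$, and $b\le d$ versus $b>d$ to bound $|b-d|$. In each case the target bound falls out from simple sign considerations together with $a\le b$ and $c\le d$, so this is a short finite case check rather than anything deep.

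\smallskip

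\noindent\emph{Step 3 (bubble sort).} If $\yy$ is not already sorted in increasing order, choose adjacent indices $i,i+1$ with $y_i>y_{i+1}$ and swap them to produce $\yy^{\rm new}$. Applying Step~2 with $a=x_i$, $b=x_{i+1}$, $c=y_{i+1}$, $d=y_i$ (valid because $x_i\le x_{i+1}$ and $y_{i+1}<y_i$) shows that
\[ \max(|x_i-y_i^{\rm new}|,|x_{i+1}-y_{i+1}^{\rm new}|)\le\max(|x_i-y_i|,|x_{i+1}-y_{i+1}|), \]
while all other entries are untouched. Hence $\max_k|x_k-y_k^{\rm new}|\le\max_k|x_k-y_k|$. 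Iterating (bubble sort terminates after finitely many adjacent transpositions), we transform $\yy$ into the sorted vector $(y_{\pi(1)},\ldots,y_{\pi(m)})$ without ever increasing the $\ell^\infty$-distance to $\xx$. This yields the claim of Step~1 and hence the lemma.

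\smallskip

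\noindent\emph{Main obstacle.} There is no real obstacle: the only place where a genuine inequality is used is Step~2, and it is an elementary four-case verification. The only mildly delicate point is the bookkeeping in Step~1, where one must be careful that conjugating $\tau$ by $\sigma$ does produce a permutation that sorts the relabeled $\yy'$ in increasing order, so that the reduced statement is exactly what is needed.
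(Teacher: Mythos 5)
Your proposal is correct, but it takes a genuinely different route from the paper. The paper's proof is a one-liner: it invokes Weyl's perturbation theorem for Hermitian matrices (\cite[Corollary~III.2.6]{Bhatia}), which states that $\max_i|\lambda_i(A)-\lambda_i(B)|\le\|A-B\|$ for Hermitian $A,B$ with eigenvalues arranged in increasing order, and then applies it to the real diagonal matrices $A=\mathrm{diag}(x_1,\ldots,x_m)$ and $B=\mathrm{diag}(y_1,\ldots,y_m)$, observing that the sorted eigenvalues of these diagonal matrices are precisely the sorted vectors and that $\|A-B\|=\max_i|x_i-y_i|$. Your bubble-sort argument, by contrast, is entirely elementary and self-contained: the reduction in Step~1 is correct (the conjugated permutation $\tau'=\sigma^{-1}\circ\tau$ does sort $\yy'=[y_{\sigma(i)}]_i$ since $y'_{\tau'(i)}=y_{\tau(i)}$), the two-index exchange inequality in Step~2 holds by the four-case check you describe, and Step~3 correctly observes that each adjacent out-of-order transposition of $\yy$ cannot increase the $\ell^\infty$-distance to the sorted $\xx$, with the final sorted vector being unique so that any sorting permutation $\tau$ yields the same left-hand side. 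The trade-off is clear: the paper's proof is shorter and places the lemma naturally within the matrix-analysis toolbox that pervades the rest of the article, while yours avoids any appeal to spectral perturbation theory and makes the combinatorial mechanism behind the inequality transparent; your route also readily generalizes to showing that the sorting operation is a contraction in any $\ell^p$ norm, not just $\ell^\infty$, since the same swap lemma holds coordinatewise.
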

\begin{proof}
By Weyl's perturbation theorem \cite[Corollary~III.2.6]{Bhatia}, for every pair of $m\times m$ Hermitian matrices $A$ and $B$ we have
\[ \max_{i=1,\ldots,m}|\lambda_i(A)-\lambda_i(B)|\le\|A-B\|, \]
where the eigenvalues of $A$ and $B$ are arranged in increasing order: $\lambda_1(A)\le\ldots\le\lambda_m(A)$ and $\lambda_1(B)\le\ldots\le\lambda_m(B)$. If we apply this result to the real diagonal matrices $A={\rm diag}(x_1,\ldots,x_m)$ and $B={\rm diag}(y_1,\ldots,y_m)$, we obtain \eqref{geresult}.
\end{proof}

As a consequence of Lemma~\ref{gelemma}, the increasing rearrangement of an a.u.\ grid is still an a.u.\ grid.

\begin{lemma}[\textbf{increasing rearrangement of an a.u.\ grid is still an a.u.\ grid}]\label{order_au}
Let $\mathcal G_n=\{x_{i,n}\}_{i=1,\ldots,d_n}$ be an a.u.\ grid in $[a,b]$ with $d_n\to\infty$, and let $\tau_n$ be a permutation of $\{1,\ldots,d_n\}$ such that $x_{\tau_n(1),n}\le\ldots\le x_{\tau_n(d_n),n}$. Then $\mathcal G_n'=\{x_{\tau_n(i),n}\}_{i=1,\ldots,d_n}$ is an a.u.\ grid in $[a,b]$ with $m(\mathcal G'_n)\le m(\mathcal G_n)$ for all $n$.
\end{lemma}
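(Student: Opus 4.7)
The plan is to derive this as essentially a one-line application of Lemma~\ref{gelemma}. I would set $\xx=[x_{1,n},\ldots,x_{d_n,n}]$ and let $\yy$ be the vector of reference uniform-grid nodes $[a+i(b-a)/d_n]_{i=1,\ldots,d_n}$. Since $\yy$ is already sorted in increasing order, the sorting permutation for $\yy$ can be chosen to be the identity, while by hypothesis the sorting permutation for $\xx$ is precisely $\tau_n$. This is the cleanest packaging of the hypotheses of Lemma~\ref{gelemma}.

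Applying Lemma~\ref{gelemma} with this choice of $\xx,\yy,\sigma=\tau_n,\tau=\mathrm{id}$, I obtain
\[ \max_{i=1,\ldots,d_n}\left|x_{\tau_n(i),n}-\left(a+i\,\frac{b-a}{d_n}\right)\right|\le\max_{i=1,\ldots,d_n}\left|x_{i,n}-\left(a+i\,\frac{b-a}{d_n}\right)\right|, \]
which is exactly the claimed inequality $m(\mathcal{G}'_n)\le m(\mathcal{G}_n)$. Since $\mathcal{G}_n$ is a.u.\ in $[a,b]$, the right-hand side tends to $0$ as $n\to\infty$, hence $m(\mathcal{G}'_n)\to 0$ and $\mathcal{G}'_n$ is an a.u.\ grid in $[a,b]$.

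There is really no obstacle here, because the whole content is already encapsulated in Lemma~\ref{gelemma}, which itself is a one-step consequence of Weyl's perturbation theorem applied to $\mathrm{diag}(\xx)$ and $\mathrm{diag}(\yy)$. The only point that deserves a moment's thought is matching notation: one must recognize that the reference grid is monotone by construction, so that the sorting permutation on the $\yy$ side is trivial, and that this is what makes the left-hand side of the inequality in Lemma~\ref{gelemma} coincide with $m(\mathcal{G}'_n)$ and the right-hand side with $m(\mathcal{G}_n)$.
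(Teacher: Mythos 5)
Your proof is correct and follows exactly the same route as the paper: both apply Lemma~\ref{gelemma} with $\xx$ the original grid and $\yy$ the reference uniform nodes $[a+i(b-a)/d_n]_{i=1}^{d_n}$, exploiting that $\yy$ is already sorted so its sorting permutation can be taken to be the identity. Your additional remark about matching the permutations to make the two sides of the inequality coincide with $m(\mathcal{G}'_n)$ and $m(\mathcal{G}_n)$ is a correct and helpful clarification, but the argument is identical in substance to the one in the paper.
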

\begin{proof}
We apply Lemma~\ref{gelemma} with $\xx=[x_{i,n}]_{i=1}^{d_n}$ and $\yy=[a+i(b-a)/d_n]_{i=1}^{d_n}$, and we obtain
\[ m(\mathcal G_n')=\max_{i=1,\ldots,d_n}\left|x_{\tau_n(i),n}-\biggl(a+i\,\frac{b-a}{d_n}\biggr)\right|\le\max_{i=1,\ldots,d_n}\left|x_{i,n}-\biggl(a+i\,\frac{b-a}{d_n}\biggr)\right|=m(\mathcal G_n). \tag*{\qedhere} \]
\end{proof}

In view of the next lemma, we point out that if $\mathcal G_n=\{x_{i,n}\}_{i=1,\ldots,d_n}$ is an a.u.\ grid in $[a,b]$ then $\mathcal G_n$ is a sequence of points for each fixed~$n$. Moreover, every sequence of points $\mathcal G_n=\{x_{i,n}\}_{i=1,\ldots,d_n}$ can also be interpreted as a multiset consisting of $d_n$ elements. In particular, if $\mathcal G_n=\{x_{i,n}\}_{i=1,\ldots,d_n}$ and $\mathcal G'_n=\{x'_{i,n}\}_{i=1,\ldots,d'_n}$ are two sequences of points (and hence two multisets), the intersection $\mathcal G_n\cap\mathcal G_n'$, the union $\mathcal G_n\cup\mathcal G_n'$,  
the difference $\mathcal G_n\setminus\mathcal G_n'$, 
the symmetric difference $\mathcal G_n\triangle\mathcal G_n'=(\mathcal G_n\setminus\mathcal G_n')\cup(\mathcal G_n'\setminus\mathcal G_n)$, etc., are well-defined multisets.
Throughout this paper, if $\{a_n\}_n$ and $\{b_n\}_n$ are any two sequences such that $a_n,b_n\ne0$ for all $n$, we write $a_n\sim b_n$ as $n\to\infty$ to indicate that $a_n/b_n\to1$ as $n\to\infty$.

\begin{lemma}[\textbf{multiset differing little from an a.u.\ grid is an a.u.\ grid}]\label{fonda.u.}
Let $\mathcal G_n=\{x_{i,n}\}_{i=1,\ldots,d_n}$ be an a.u.\ grid in $[a,b]$ with $d_n\to\infty$, and let $\mathcal G_n'=\{x_{i,n}'\}_{i=1,\ldots,d_n'}$ be a sequence of $d_n'$ real points such that $\mathcal G_n'\subset[a-\epsilon_n,b+\epsilon_n]$ for some $\epsilon_n\to0$ and $|\mathcal G_n\triangle\mathcal G_n'|=o(d_n)$ as $n\to\infty$. Then, $d_n'\sim d_n$ as $n\to\infty$ and $\mathcal G_n'$ is an a.u.\ grid in $[a,b]$ provided that we rearrange its points in increasing order.
\end{lemma}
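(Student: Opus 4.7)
The plan is to reduce the problem to an analysis of the empirical counting functions $N_n(t):=|\{i:x_{i,n}\le t\}|$ and $N_n'(t):=|\{i:x_{i,n}'\le t\}|$. First I would split both multisets in the standard way, $\mathcal G_n=(\mathcal G_n\cap\mathcal G_n')\sqcup(\mathcal G_n\setminus\mathcal G_n')$ and $\mathcal G_n'=(\mathcal G_n\cap\mathcal G_n')\sqcup(\mathcal G_n'\setminus\mathcal G_n)$, which at once yields both
\[
|d_n-d_n'|\le|\mathcal G_n\triangle\mathcal G_n'|=o(d_n)\qquad\text{and}\qquad\sup_{t\in\mathbb R}\,|N_n(t)-N_n'(t)|\le|\mathcal G_n\triangle\mathcal G_n'|=o(d_n).
\]
The first of these already gives $d_n'\sim d_n$, settling the first half of the conclusion.

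By Lemma~\ref{order_au}, I may replace $\mathcal G_n$ by its increasing rearrangement without losing the a.u.\ property, so I will assume $x_{1,n}\le\cdots\le x_{d_n,n}$ with $|x_{i,n}-(a+i(b-a)/d_n)|\le m(\mathcal G_n)\to0$. A direct estimate then gives the asymptotic profile
\[
N_n(t)=\frac{d_n(t-a)}{b-a}+o(d_n)\quad\text{uniformly for }t\in[a,b],
\]
together with $N_n(t)=o(d_n)$ for $t\le a$ and $d_n-N_n(t)=o(d_n)$ for $t\ge b$. Transferring via $|N_n-N_n'|=o(d_n)$ and $d_n\sim d_n'$, the same profile will hold for $N_n'$.

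Finally, I would sort $\mathcal G_n'$ as $y_{1,n}\le\cdots\le y_{d_n',n}$ and pin down each $y_{i,n}$ through the sandwich $N_n'(y_{i,n}^{-})<i\le N_n'(y_{i,n})$. Inverting the asymptotic profile of $N_n'$ (and letting $\delta\to 0$ in the left-limit estimate applied to $y_{i,n}-\delta$) forces $y_{i,n}=a+i(b-a)/d_n'+o(1)$ uniformly in $i$. The step I expect to be the main obstacle is the boundary regime $y_{i,n}\in[a-\epsilon_n,a)\cup(b,b+\epsilon_n]$, where the profile formula above degenerates; the remedy is that $N_n'(a)=o(d_n')$ forces the corresponding indices to satisfy $i=o(d_n')$, whence the target $a+i(b-a)/d_n'$ is itself $a+o(1)$ and the bound $y_{i,n}\ge a-\epsilon_n$ yields the required $o(1)$ control. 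The situation at the right endpoint is symmetric, and the proof is complete.
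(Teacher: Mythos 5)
Your proposal is correct and takes a genuinely different route from the paper's. The paper argues directly on the two sorted sequences: it fixes the indices $i_1<\cdots<i_{a_n}$ and $j_1<\cdots<j_{a_n}$ at which the common points of $\mathcal G_n$ and $\mathcal G_n'$ sit, shows $|i_k-j_k|\le b_n$ and $|d_n-d_n'|\le b_n$ with $b_n=|\mathcal G_n\triangle\mathcal G_n'|$, and then bounds $|x'_{j,n}-j/d_n'|$ by a case analysis (whether $j$ hits a common index or falls between two consecutive ones, with boundary conventions) — a somewhat fiddly but elementary index bookkeeping. You instead pass to the empirical counting functions $N_n,N_n'$, observe the clean bound $\|N_n-N_n'\|_\infty\le|\mathcal G_n\triangle\mathcal G_n'|$ (which neatly encodes all the index shuffling in one line), derive a uniform linear profile $N_n(t)=d_n(t-a)/(b-a)+O(d_n m(\mathcal G_n)+1)$ on $[a,b]$ from the a.u.\ hypothesis, transfer it to $N_n'$, and invert via the sandwich $N_n'(y_{i,n}^-)<i\le N_n'(y_{i,n})$. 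Your approach is conceptually cleaner (it makes the translation between a.u.\ grids and quantile/distribution functions explicit, matching the spirit of the rest of the paper) and the quantitative bound it produces is of the same order as the paper's; the paper's approach is more hands-on and avoids introducing $N_n$. One thing worth making explicit in a write-up: the $o(1)$ you obtain must be a single sequence independent of $i$, which it is — it is controlled by $c_n=d_n m(\mathcal G_n)/(b-a)+2b_n+O(1)$ divided by $d_n'$ plus $\epsilon_n$ at the endpoints — so the uniformity claim is sound. You also handle the endpoint regime correctly via $N_n'(a)\le c_n$.
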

\begin{proof}
Without loss of generality, we can assume that the points of $\mathcal G_n$ and $\mathcal G_n'$ are arranged in increasing order:
\[ x_{1,n}\le\ldots\le x_{d_n,n},\qquad x_{1,n}'\le\ldots\le x_{d_n',n}'. \]
Indeed, after rearranging the points of $\mathcal G_n$ and $\mathcal G_n'$ in increasing order (if necessary), the assumptions of the lemma are still satisfied, because $\mathcal G_n$ remains an a.u.\ grid in $[a,b]$ by Lemma~\ref{order_au} and $\mathcal G_n\triangle\mathcal G_n'$ does not change. 
To simplify the notation, we prove the lemma in the case $[a,b]=[0,1]$; up to obvious modifications, the proof for a general interval $[a,b]$ is the same as the proof for the interval $[0,1]$. 
Let $a_n=|\mathcal G_n\cap\mathcal G_n'|$ and $b_n=|\mathcal G_n\triangle\mathcal G_n'|=o(d_n)$. 
Let $I_n=\{i_1,\ldots,i_{a_n}\}$ and $J_n=\{j_1,\ldots,j_{a_n}\}$ be two sets of indices such that
\begin{alignat*}{3}
\{x_{i_1,n},\ldots,x_{i_{a_n},n}\}&=\mathcal G_n\cap\mathcal G_n', &\qquad1&\le i_1<\ldots<i_{a_n}\le d_n,\\
\{x'_{j_1,n},\ldots,x'_{j_{a_n},n}\}&=\mathcal G_n\cap\mathcal G_n', &\qquad1&\le j_1<\ldots<j_{a_n}\le d_n';
\end{alignat*}
see Figure~\ref{griglie_au}. We make the following observations.
\begin{itemize}[nolistsep,leftmargin=*]
	\item $x_{i_k,n}=x'_{j_k,n}$ for all $k=1,\ldots,a_n$. Indeed, $x_{i_1,n},\ldots,x_{i_{a_n},n}$ and $x'_{j_1,n},\ldots,x'_{j_{a_n},n}$ are the points of the same multiset $\mathcal G_n\cap\mathcal G_n'$ arranged in increasing order. 
	\item $|d_n-d_n'|\le b_n$. Indeed, if $c_n=|\mathcal G_n\setminus\mathcal G_n'|$ and $e_n=|\mathcal G_n'\setminus\mathcal G_n|$, then 
	\begin{align*}
	d_n-c_n&=|\mathcal G_n|-|\mathcal G_n\setminus\mathcal G_n'|=|\mathcal G_n\cap\mathcal G_n'|=a_n,\\
	d_n'-e_n&=|\mathcal G_n'|-|\mathcal G_n'\setminus\mathcal G_n|=|\mathcal G_n'\cap\mathcal G_n|=a_n,\\
	c_n+e_n&=|\mathcal G_n\setminus\mathcal G_n'|+|\mathcal G_n'\setminus\mathcal G_n|=|\mathcal G_n\triangle\mathcal G_n'|=b_n,
	\end{align*}
	and
	\[ |d_n-d_n'|=|d_n-c_n+c_n-e_n+e_n-d_n'|=|a_n+c_n-e_n-a_n|=|c_n-e_n|\le c_n+e_n=b_n. \]
	\item $|i_k-j_k|\le b_n$ for all $k=1,\ldots,a_n$. Indeed, if $c_{n,k}=|\mathcal G_{n,k}\setminus\mathcal G_n'|$ with $\mathcal G_{n,k}=\{x_{1,n},\ldots,x_{i_k,n}\}\subseteq\mathcal G_n$ and $e_{n,k}=|\mathcal G_{n,k}'\setminus\mathcal G_n|$ with $\mathcal G_{n,k}'=\{x_{1,n}',\ldots,x_{j_k,n}'\}\subseteq\mathcal G_n'$, then\,\footnote{\,To follow the reasoning, look at Figure~\ref{griglie_au} and assume, for example, that $k=3$. Then, $i_3=8$, $j_3=10$, $c_{n,3}=5$, $e_{n,3}=7$.}
	\begin{figure}
	\centering
	\includegraphics[width=\textwidth]{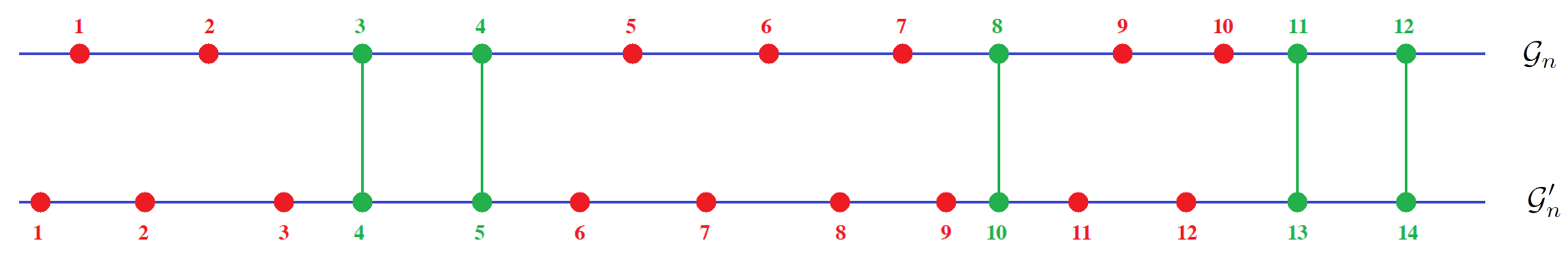}
	\caption{Illustration for the proof of Lemma~\ref{fonda.u.}. The dots on the first line are the points of $\mathcal G_n$ and the dots on the second line are the points of $\mathcal G_n'$. The green dots are the points of $\mathcal G_n\cap\mathcal G_n'$ while the red dots are the points of $\mathcal G_n\triangle\mathcal G_n'$. In this case, we have $d_n=12$, $d_n'=14$, $a_n=5$, $b_n=16$,  
$I_n=\{3,4,8,11,12\}$, $J_n=\{4,5,10,13,14\}$. 
	}
	\label{griglie_au}
	\end{figure}
	\begin{align*}
	i_k-c_{n,k}&=|\mathcal G_{n,k}|-|\mathcal G_{n,k}\setminus\mathcal G_n'|=|\mathcal G_{n,k}\cap\mathcal G_n'|=|\mathcal G_{n,k}\cap(\mathcal G_n\cap\mathcal G_n')|=|\{x_{i_1,n},\ldots,x_{i_k,n}\}|=k,\\
	j_k-e_{n,k}&=|\mathcal G_{n,k}'|-|\mathcal G_{n,k}'\setminus\mathcal G_n|=|\mathcal G_{n,k}'\cap\mathcal G_n|=|\mathcal G_{n,k}'\cap(\mathcal G_n'\cap\mathcal G_n)|=|\{x_{j_1,n}',\ldots,x_{j_k,n}'\}|=k,\\
	c_{n,k}+e_{n,k}&=|\mathcal G_{n,k}\setminus\mathcal G_n'|+|\mathcal G_{n,k}'\setminus\mathcal G_n|\le|\mathcal G_n\setminus\mathcal G_n'|+|\mathcal G_n'\setminus\mathcal G_n|=c_n+e_n=b_n,
	\end{align*}
	and
	\[ |i_k-j_k|=|i_k-c_{n,k}+c_{n,k}-e_{n,k}+e_{n,k}-j_k|=|k+c_{n,k}-e_{n,k}-k|=|c_{n,k}-e_{n,k}|\le c_{n,k}+e_{n,k}\le b_n. \]
\end{itemize}
From $|d_n-d_n'|\le b_n$ and the assumption $b_n=o(d_n)$, we immediately obtain that
\[
\left|\frac{d_n'}{d_n}-1\right|=\left|\frac{d_n'-d_n}{d_n}\right|\le\frac{b_n}{d_n}\to0\,\mbox{ as }\,n\to\infty\quad\implies\quad d_n'\sim d_n\,\mbox{ as }\,n\to\infty. 
\]
It remains to prove that $\mathcal G_n'$ is a.u.\ in $[0,1]$, i.e.,
\begin{equation}\label{max->0}
\lim_{n\to\infty}m(\mathcal G_n')=0,\qquad m(\mathcal G_n')=\max_{j=1,\ldots,d_n'}\left|x_{j,n}'-\frac j{d_n'}\right|.
\end{equation}
Recall that, by assumption, $\mathcal G_n$ is a.u.\ in $[0,1]$, i.e.,
\[ \lim_{n\to\infty}m(\mathcal G_n)=0,\qquad m(\mathcal G_n)=\max_{i=1,\ldots,d_n}\left|x_{i,n}-\frac i{d_n}\right|. \]
We consider two cases.
\begin{enumerate}[nolistsep,leftmargin=14pt]
	\item[$\blacktriangleright$] $j\in J_n$. In this case, $j=j_k$ for some $k=1,\ldots,a_n$ and $x_{j,n}'=x_{j_k,n}'=x_{i_k,n}$. Thus,
	\begin{align}\label{j=jk}
	\notag\left|x_{j,n}'-\frac j{d_n'}\right|&=\left|x_{i_k,n}-\frac{j_k}{d_n'}\right|\le\left|x_{i_k,n}-\frac{i_k}{d_n}\right|+\left|\frac{i_k}{d_n}-\frac{i_k}{d_n'}\right|+\left|\frac{i_k}{d_n'}-\frac{j_k}{d_n'}\right|=\left|x_{i_k,n}-\frac{i_k}{d_n}\right|+i_k\frac{|d_n'-d_n|}{d_nd_n'}+\frac{|i_k-j_k|}{d_n'}\\
	&\le m(\mathcal G_n)+2\frac{b_n}{d_n'}.
	\end{align}
	\item[$\blacktriangleright$] $j\not\in J_n$. In this case, let $j_k,j_{k+1}\in J_n$ be the two indices in $J_n$ surrounding $j$ as in Figure~\ref{griglie_au_bis}. Note that $j_k$ may not exist if $j$ is ``too close'' to the left boundary (as it would happen in Figure~\ref{griglie_au_bis} if $j$ were equal to $1$, $2$ or $3$); in this situation, we have $j_{k+1}=j_1$ and we set by convention $j_k=j_0=0$, $i_k=i_0=0$, $x_{0,n}'=x_{0,n}=0$. Similarly, if $j_{k+1}$ does not exist (this may happen if $j$ is ``too close'' to the right boundary), 
	then $j_k=j_{a_n}$ and we set by convention $j_{k+1}=j_{a_n+1}=d_n'+1$, $i_{k+1}=i_{a_n+1}=d_n+1$, $x_{d_n'+1,n}'=x_{d_n+1,n}=1$. With these definitions and conventions, 
	we have the following.
	\begin{itemize}[nolistsep,leftmargin=*]
		\item $j_k<j<j_{k+1}$.
		\item $x_{i_k,n}=x_{j_k,n}'\le x_{j_{k+1},n}'=x_{i_{k+1},n}$.
		\item $x_{j_k,n}'-\epsilon_n\le x_{j,n}'\le x_{j_{k+1},n}'+\epsilon_n$ (recall that $\mathcal G_n'\subset[-\epsilon_n,1+\epsilon_n]$ by assumption).
		\item $|j-j_k|\le b_n$. Indeed, looking at Figure~\ref{griglie_au_bis} and keeping in mind our conventions for the left and right boundaries, we have
		\begin{align*}
		|j-j_k|&\le j_{k+1}-j_k-1=\mbox{number of indices strictly between $j_k$ and $j_{k+1}$}\\
		&\le|\mathcal G_n'\setminus\mathcal G_n|\le|\mathcal G_n'\triangle\mathcal G_n|=b_n.
		\end{align*}
		\item $|i_{k+1}-i_k|\le b_n+1$. Indeed, looking at Figure~\ref{griglie_au_bis} and keeping in mind our conventions for the left and right boundaries, we have
		\begin{align*}
		|i_{k+1}-i_k|&=i_{k+1}-i_k=\mbox{number of indices between $i_k$ and $i_{k+1}$ (including $i_{k+1}$)}\\
		&\le|\mathcal G_n\setminus\mathcal G_n'|+1\le|\mathcal G_n\triangle\mathcal G_n'|+1=b_n+1.
		\end{align*}
		\item $\displaystyle\left|x_{j_k,n}'-\frac{j_k}{d_n'}\right|\le m(\mathcal G_n)+2\frac{b_n}{d_n'}$ by \eqref{j=jk} (if $j_k\in J_n$) or by direct verification (if $j_k=0$). \vspace{5pt}
		\item $\displaystyle\left|x_{i_k,n}-\frac{i_k}{d_n}\right|\le m(\mathcal G_n)$ by definition of $m(\mathcal G_n)$ (if $i_k\in I_n$) or by direct verification (if $i_k=0$). \vspace{5pt}
		\item $\displaystyle\left|x_{i_{k+1},n}-\frac{i_{k+1}}{d_n}\right|\le\max\left(m(\mathcal G_n),\left|1-\frac{d_n+1}{d_n}\right|\right)=\max\left(m(\mathcal G_n),\frac{1}{d_n}\right)$, where the quantity $1/d_n$ takes into account the boundary case $i_{k+1}=d_n+1$.
	\end{itemize}
	Thus,
	\begin{align*}
	\left|x_{j,n}'-\frac{j}{d_n'}\right|&\le|x_{j,n}'-x_{j_k,n}'|+\left|x_{j_k,n}'-\frac{j_k}{d_n'}\right|+\left|\frac{j_k}{d_n'}-\frac j{d_n'}\right|\\
	&\le x_{j_{k+1},n}'-x_{j_k,n}'+2\epsilon_n+m(\mathcal G_n)+2\frac{b_n}{d_n'}+\frac{b_n}{d_n'}\\
	&=|x_{i_{k+1},n}-x_{i_k,n}|+2\epsilon_n+m(\mathcal G_n)+3\frac{b_n}{d_n'}\\
	&\le\left|x_{i_{k+1},n}-\frac{i_{k+1}}{d_n}\right|+\left|\frac{i_{k+1}}{d_n}-\frac{i_k}{d_n}\right|+\left|\frac{i_k}{d_n}-x_{i_k,n}\right|+2\epsilon_n+m(\mathcal G_n)+3\frac{b_n}{d_n'}\\
	&\le\max\left(m(\mathcal G_n),\frac{1}{d_n}\right)+\frac{b_n+1}{d_n}+m(\mathcal G_n)+2\epsilon_n+m(\mathcal G_n)+3\frac{b_n}{d_n'}\\
	&\le3m(\mathcal G_n)+\frac{b_n+2}{d_n}+2\epsilon_n+3\frac{b_n}{d_n'}.
	\end{align*}
\end{enumerate}
\begin{figure}
\centering
\includegraphics[width=\textwidth]{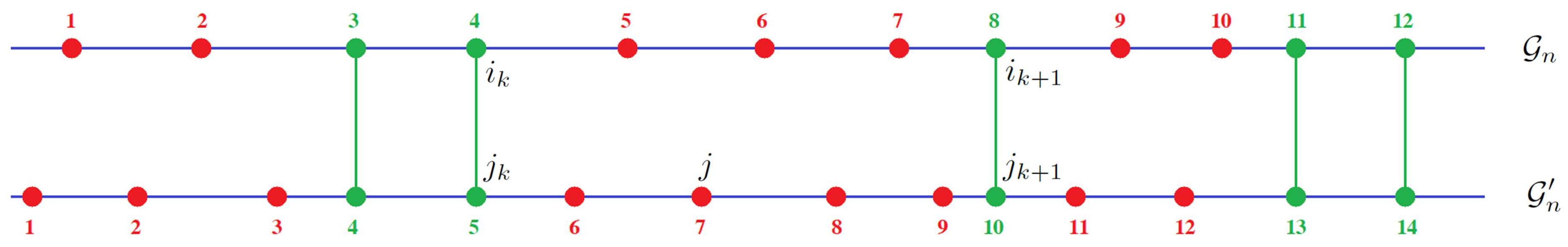}
\caption{Illustration for the proof of Lemma~\ref{fonda.u.}.}
\label{griglie_au_bis}
\end{figure}
In conclusion, by combining the two considered cases, for all $j=1,\ldots,d_n'$ we have
\[ \left|x_{j,n}'-\frac{j}{d_n'}\right|\le3m(\mathcal G_n)+\frac{b_n+2}{d_n}+2\epsilon_n+3\frac{b_n}{d_n'}, \]
which is a quantity independent of $j$ and tending to $0$ as $n\to\infty$ (recall that $b_n=o(d_n)$ and $d'_n\sim d_n$ as $n\to\infty$). Thus, the thesis \eqref{max->0} follows.
\end{proof}

\subsection{Properties of continuous functions satisfying particular conditions} 

Lemma~\ref{ucf-1} highlights a property of continuous monotone functions on a compact interval. This property can be proved on the basis of the following more general results:
\begin{itemize}[leftmargin=*,nolistsep]
	\item if $f:[a,b]\to\mathbb R$ is continuous and strictly monotone, 
	then its inverse $f^{-1}:f([a,b])\to[a,b]$ is continuous and strictly monotone; 
	\item if $f:[a,b]\to\mathbb R$ is continuous and monotone,  
	then $f$ is uniformly continuous. 
\end{itemize}
However, for the reader's convenience, we include a direct proof of Lemma~\ref{ucf-1}.

\begin{lemma}\label{ucf-1}
Let $f:[a,b]\to\mathbb R$ be continuous and strictly monotone on $[a,b]$. 
Then, for every $\delta>0$ there exists $\epsilon>0$ such that
\begin{equation*}
[f(x)-\epsilon,f(x)+\epsilon]\subseteq f([x-\delta,x+\delta]),\qquad\forall\,x\in[a+\delta,b-\delta].
\end{equation*}
\end{lemma}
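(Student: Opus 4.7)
The plan is to reduce to the strictly increasing case (the strictly decreasing case being symmetric), then construct an explicit $\epsilon$ via a compactness argument. Without loss of generality, assume $f$ is strictly increasing on $[a,b]$. Then for every $x\in[a+\delta,b-\delta]$, the points $x-\delta$ and $x+\delta$ both lie in $[a,b]$ and satisfy $f(x-\delta)<f(x)<f(x+\delta)$. By continuity of $f$ and the intermediate value theorem applied on the interval $[x-\delta,x+\delta]$, the image $f([x-\delta,x+\delta])$ is precisely the closed interval $[f(x-\delta),f(x+\delta)]$.

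Define the function
\[ \epsilon(x)=\min\bigl(f(x)-f(x-\delta),\ f(x+\delta)-f(x)\bigr),\qquad x\in[a+\delta,b-\delta]. \]
By the strict monotonicity of $f$, we have $\epsilon(x)>0$ for every $x\in[a+\delta,b-\delta]$. Moreover, since $f$ is continuous on the compact interval $[a,b]$, the map $x\mapsto\epsilon(x)$ is continuous on the compact interval $[a+\delta,b-\delta]$. Hence by the extreme value theorem it attains a minimum value
\[ \epsilon=\min_{x\in[a+\delta,b-\delta]}\epsilon(x)>0. \]

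With this choice of $\epsilon$, for every $x\in[a+\delta,b-\delta]$ we have
\[ [f(x)-\epsilon,f(x)+\epsilon]\subseteq[f(x)-\epsilon(x),f(x)+\epsilon(x)]\subseteq[f(x-\delta),f(x+\delta)]=f([x-\delta,x+\delta]), \]
which is the desired inclusion. The proof in the strictly decreasing case is identical after swapping the roles of $f(x-\delta)$ and $f(x+\delta)$ in the definition of $\epsilon(x)$. There is no real obstacle here; the only point worth verifying carefully is that $\epsilon(x)$ is bounded away from $0$, which follows immediately from the compactness of $[a+\delta,b-\delta]$ combined with continuity and pointwise positivity.
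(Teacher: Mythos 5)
Your proof is correct and uses essentially the same argument as the paper: reduce to the strictly increasing case, observe that the gaps $f(x)-f(x-\delta)$ and $f(x+\delta)-f(x)$ are continuous and strictly positive, and invoke the extreme value theorem on a compact interval to get a uniform lower bound $\epsilon>0$. The only cosmetic difference is that you combine both gaps into a single function $\epsilon(x)$ on $[a+\delta,b-\delta]$, whereas the paper treats $f^+_\delta$ on $[a,b-\delta]$ and $f^-_\delta$ on $[a+\delta,b]$ separately and then takes the minimum of the two resulting bounds.
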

\begin{proof}
We prove the lemma under the assumption that $f$ is strictly increasing on $[a,b]$; the proof in the case where $f$ is strictly decreasing on $[a,b]$ is completely analogous. Fix $\delta>0$. Since $f$ is continuous and strictly increasing on $[a,b]$, the function $f^+_\delta(x) = f(x+\delta)-f(x)$ is continuous and strictly positive on $[a,b-\delta]$. 
As a consequence, $f^+_\delta(x)$ has a strictly positive minimum $\epsilon^+>0$ on $[a,b-\delta]$: 
\begin{equation*}
f(x+\delta)-f(x)\ge\epsilon^+,\qquad\forall\,x\in[a,b-\delta].
\end{equation*}
Similarly, the function $f^-_\delta(x) = f(x) - f(x-\delta)$ is continuous and strictly positive on $[a+\delta,b]$ and so it has a strictly positive minimum $\epsilon^->0$ on $[a+\delta,b]$: 
\begin{equation*}
f(x)-f(x-\delta)\ge\epsilon^-,\qquad\forall\,x\in[a+\delta,b].
\end{equation*}
If we set $\epsilon = \min(\epsilon^+,\epsilon^-)>0$, then we see that
\[ \left\{\begin{aligned}
&f(x+\delta)-f(x)\ge\epsilon,\\
&f(x)-f(x-\delta)\ge\epsilon,
\end{aligned}\right.\qquad\forall\,x\in[a+\delta,b-\delta], \]
which is equivalent to
\[ \left\{\begin{aligned}
&f(x-\delta)\le f(x)-\epsilon,\\
&f(x)+\epsilon\le f(x+\delta),
\end{aligned}\right.\qquad\forall\,x\in[a+\delta,b-\delta]. \]
Thus, for every $x\in[a+\delta,b-\delta]$ we have $[f(x)-\epsilon,f(x)+\epsilon]\subseteq[f(x-\delta),f(x+\delta)]=f([x-\delta,x+\delta])$, where the latter equality follows from the fact that $f$ is continuous and monotone increasing.
\end{proof}

Lemma~\ref{f-1(l)finito} shows that the preimage of any point through a continuous function $f:[a,b]\to\mathbb R$ having a finite number of local maximum/minimum points is finite.

\begin{lemma}\label{f-1(l)finito}
Let $I$ be a bounded real interval and let $f:I\to\mathbb R$ be continuous on $I$ with a finite number of local maximum points and local minimum points. Then, for every $\lambda\in\mathbb R$, the set $f^{-1}(\lambda)=\{x\in I:f(x)=\lambda\}$ is finite.
\end{lemma}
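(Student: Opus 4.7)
The plan is to argue by contradiction: assume $f^{-1}(\lambda)$ is infinite and extract an infinite family of pairwise distinct local extrema of $f$, contradicting the finiteness hypothesis. First I would invoke Bolzano--Weierstrass on the bounded set $f^{-1}(\lambda)\subseteq I$ to obtain an accumulation point $x_0\in\overline{I}$, and then pick a strictly monotone sequence $(x_n)_n$ of distinct points of $f^{-1}(\lambda)$ with $x_n\to x_0$; relabeling if needed, I may assume $x_1<x_2<\cdots$, so that the open intervals $(x_n,x_{n+1})$ are pairwise disjoint and contained in $I$.

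Next I would split into two cases according to whether $f$ is constant equal to $\lambda$ on some $[x_n,x_{n+1}]$ or not. In the first case, every point of $(x_n,x_{n+1})$ is simultaneously a (weak) local maximum and a (weak) local minimum point of $f$ in $I$ in the sense of Definition~\ref{wlep}, using $(x_n,x_{n+1})$ itself as the neighborhood required by the definition; this already produces infinitely many local maximum points and contradicts the hypothesis. In the second case, for each $n$ there exists $c_n\in(x_n,x_{n+1})$ with $f(c_n)\neq\lambda$. Passing to a subsequence, I may assume that the sign of $f(c_n)-\lambda$ is the same for all $n$; say $f(c_n)>\lambda$ for all $n$ (the opposite case is symmetric and yields infinitely many local minima instead). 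By the Weierstrass extreme value theorem, $f$ attains its maximum on the compact interval $[x_n,x_{n+1}]$ at some $c_n^\star$, and since $f(x_n)=f(x_{n+1})=\lambda<f(c_n)\le f(c_n^\star)$, necessarily $c_n^\star\in(x_n,x_{n+1})$. Then $c_n^\star$ is a local maximum point of $f$ in $I$, with neighborhood $(x_n,x_{n+1})$.

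Finally, I would observe that because the intervals $(x_n,x_{n+1})$ are pairwise disjoint, the points $c_n^\star$ are pairwise distinct, so $f$ has infinitely many local maximum points in $I$, again contradicting the hypothesis. Either case therefore yields the required contradiction, establishing that $f^{-1}(\lambda)$ is finite.

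The only delicate point, and the one I would watch most carefully, is the correct use of the weak local-extremum convention of Definition~\ref{wlep}: one must not argue that the maximum on $[x_n,x_{n+1}]$ is a \emph{global} extremum of $f$ on $I$ (it need not be, as $f$ may take larger values outside), but rather that it is a local extremum witnessed by the neighborhood $(x_n,x_{n+1})$. Correspondingly, in the constant case one must remember that under the weak definition every interior point of a constant subinterval qualifies as a local extremum, which is what makes the argument close.
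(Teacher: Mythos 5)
Your proof is correct and takes essentially the same route as the paper's: extract an accumulation point of $f^{-1}(\lambda)$, pass to a strictly monotone sequence of preimages, and locate a (weak) local extremum of $f$ in the interior of each interval between consecutive preimages, yielding infinitely many distinct local extrema and hence a contradiction. The only stylistic differences are that you treat the ``$f$ constant on a gap interval'' case as a separate branch (the paper disposes of it implicitly by noting that the finiteness hypothesis already forbids $f$ from being constant on any nondegenerate subinterval) and you pass to a subsequence of constant sign to force local maxima specifically, a step that is harmless but unnecessary since counting local extrema of either type already suffices.
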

\begin{proof}
For a given $\lambda\in\mathbb R$, suppose that $\xi,\eta$ are any two distinct points in $f^{-1}(\lambda)$ with $\xi<\eta$. The function $f:[\xi,\eta]\to\mathbb R$ is continuous by assumption, is not constant on $[\xi,\eta]$ by the assumption that it has a finite number of local maximum/minimum points, and satisfies $f(\xi)=f(\eta)=\lambda$. Thus, at least one local extremum point of $f$ (either the absolute maximum or the absolute minimum of $f$ on $[\xi,\eta]$) lies in the open interval $(\xi,\eta)$.

Now, suppose by contradiction that $f^{-1}(\lambda)$ is infinite for a certain $\lambda\in\mathbb R$. Since $f^{-1}(\lambda)$ is contained in the compact interval $\overline I$, it must have an accumulation point $\alpha\in\overline I$. Hence, we can find a strictly monotone sequence $\{\xi_i\}_i\subseteq f^{-1}(\lambda)$ such that $\xi_i\to\alpha$. 
In each interval between two consecutive points $\xi_i$ and $\xi_{i+1}$ we have at least one local extremum point of $f$ by the reasoning at the beginning of the proof. We conclude that $f$ has infinitely many local extremum points, which is a contradiction to the hypothesis.
\end{proof}

\begin{corollary}\label{f-1(l)finito-d}
Let $I$ be a bounded real interval and let $f:I\to\mathbb R$ be continuous on $I$ with a finite number of local maximum points, local minimum points, and discontinuity points. Then, for every $\lambda\in\mathbb R$, the set $f^{-1}(\lambda)=\{x\in I:f(x)=\lambda\}$ is finite.
\end{corollary}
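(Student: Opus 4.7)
The plan is to reduce Corollary~\ref{f-1(l)finito-d} to Lemma~\ref{f-1(l)finito} by partitioning $I$ at the discontinuity points of $f$. Let $D = \{d_1 < \cdots < d_r\} \subset I$ denote the (by hypothesis, finite) set of discontinuity points of $f$. Removing $D$ from $I$ splits $I \setminus D$ into finitely many subintervals $J_1, \ldots, J_m$ (each open in $I$ away from the global endpoints of $I$), and by construction $f$ is continuous on each $J_k$.

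Next I would check that the restriction $f|_{J_k}$ inherits the finiteness-of-local-extrema hypothesis needed to invoke Lemma~\ref{f-1(l)finito}. Since each $J_k$ is open in $I$ except possibly at a point that is also an endpoint of $I$, any sufficiently small neighborhood of an interior point $x_0 \in J_k$ inside $J_k$ is also a neighborhood of $x_0$ inside $I$; so a local maximum or minimum of $f|_{J_k}$ at such an interior $x_0$ is automatically a local maximum or minimum of $f$ on $I$ in the sense of Definition~\ref{wlep}. The only extra points to worry about are the at-most-two endpoints of $I$ itself, which can contribute only a bounded (and irrelevant) additional number of extrema. Hence $f|_{J_k}:J_k \to \mathbb R$ is continuous on the bounded interval $J_k$ with only finitely many local maximum and local minimum points, and Lemma~\ref{f-1(l)finito} applies to yield that $f^{-1}(\lambda) \cap J_k$ is finite for every $\lambda \in \mathbb R$.

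To conclude, I would write
\[ f^{-1}(\lambda) = \bigl(f^{-1}(\lambda) \cap D\bigr) \cup \bigcup_{k=1}^m \bigl(f^{-1}(\lambda) \cap J_k\bigr), \]
which is a finite union of finite sets (using $|D| = r < \infty$), hence finite.

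The only mild subtlety, and the closest thing to an obstacle, is the bookkeeping step verifying that the restriction from $I$ to $J_k$ does not inflate the count of local extrema in an uncontrolled fashion: one must check that local extrema of $f|_{J_k}$ at interior points genuinely correspond to local extrema of $f$ on $I$. As noted, this follows cleanly from the openness of $J_k$ in $I$ at interior points, so the reduction to Lemma~\ref{f-1(l)finito} goes through without complication.
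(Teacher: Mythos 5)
Your proof is correct and follows essentially the same route as the paper's: partition $I$ at the finitely many discontinuity points, observe that on each resulting subinterval $f$ is continuous with finitely many local extrema (so Lemma~\ref{f-1(l)finito} applies), and conclude that $f^{-1}(\lambda)$ is a finite union of finite sets. Your explicit check that local extrema of $f|_{J_k}$ at interior points of $J_k$ are genuine local extrema of $f$ on $I$ fills in a step the paper states without comment, but the underlying argument is the same.
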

\begin{proof}
Let $a<b$ be the endpoints of the interval $I$ and let $a=\eta_0<\eta_1<\ldots<\eta_\ell<\eta_{\ell+1}=b$ be the discontinuity points of $f$, to which we also add the boundary points $\eta_0=a$ and $\eta_{\ell+1}=b$. Note that the restriction $f:(\eta_j,\eta_{j+1})\to\mathbb R$ is continuous and satisfies the assumptions of Lemma~\ref{f-1(l)finito} for every $j=0,\ldots,\ell$. Hence, for every $\lambda\in\mathbb R$, the set
\[ f^{-1}(\lambda)=\{x\in I:f(x)=\lambda\}\subseteq\bigcup_{j=1}^\ell\{x\in(\eta_j,\eta_{j+1}):f(x)=\lambda\}\cup\{\eta_0,\eta_1,\ldots,\eta_{\ell+1}\} \]
is finite by Lemma~\ref{f-1(l)finito}.
\end{proof}

\subsection{Proof of Theorem~\ref{thm:mmd}}

The last results we need to prove Theorem~\ref{thm:mmd} are the following two technical lemmas.
Lemma~\ref{conteggio_griglia} provides a straightforward estimate of the largest number of points taken from a uniform grid that can lie in a fixed interval; the proof is left to the reader.
Lemma~\ref{Ex3.3} is a slight generalization of \cite[Exercise~3.3]{GLTbookI}.

\begin{lemma}\label{conteggio_griglia}
Let $h>0$ and let $\{\vartheta_{i,h}\}_{i\in\mathbb Z}$ be a uniform grid in $\mathbb R$ with stepsize $h$, say $\vartheta_{i,h}=x_0+ih$ with $x_0\in\mathbb R$ and $i\in\mathbb Z$. 
Then, for any interval $[\alpha,\beta]\subset\mathbb R$, we have
\begin{equation*}
|\{i\in\mathbb Z:\vartheta_{i,h}\in[\alpha,\beta]\}|\le\left\lfloor(\beta-\alpha)/h\right\rfloor+1.
\end{equation*}
\end{lemma}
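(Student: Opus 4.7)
The plan is a direct counting argument, and the only care needed is in handling the floor function correctly.

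My first step would be to reduce the grid-counting problem to counting integers in a real interval. Since $\vartheta_{i,h}=x_0+ih\in[\alpha,\beta]$ is equivalent to $(\alpha-x_0)/h\le i\le(\beta-x_0)/h$, the cardinality on the left-hand side of the inequality equals $|\{i\in\mathbb Z:A\le i\le B\}|$, where $A=(\alpha-x_0)/h$ and $B=(\beta-x_0)/h$, so that $B-A=(\beta-\alpha)/h$.

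Then I would prove the elementary fact that any closed real interval $[A,B]$ contains at most $\lfloor B-A\rfloor+1$ integers. If the set of integers in $[A,B]$ is empty, there is nothing to prove. Otherwise, letting $i_1<\cdots<i_k$ be the integers lying in $[A,B]$, one has $k-1\le i_k-i_1\le B-A$. Since $k-1$ is a non-negative integer bounded above by the real number $B-A$, it must also be bounded by $\lfloor B-A\rfloor$, whence
\[ k\le\lfloor B-A\rfloor+1=\left\lfloor\frac{\beta-\alpha}{h}\right\rfloor+1, \]
which is the desired estimate.

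There is essentially no obstacle here; the lemma is really just the statement that an interval of length $L$ contains at most $\lfloor L\rfloor+1$ integers, rephrased in terms of an affinely transformed grid. The only subtle point is that $B-A$ need not itself be an integer, which is precisely why the floor appears in the bound rather than the naive $B-A+1$.
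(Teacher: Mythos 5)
Your proof is correct. The paper actually leaves this lemma without proof (``the proof is left to the reader''), so there is nothing to compare against; your argument---translate the grid condition into counting integers in a real interval, then use that $k-1\le i_k-i_1\le B-A$ together with the integrality of $k-1$ to pass to the floor---is precisely the expected direct counting argument.
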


\begin{lemma}\label{Ex3.3}
For every $\epsilon>0$, let $\{q_n(\epsilon)\}_n$ be a sequence of numbers such that $q_n(\epsilon)\to q(\epsilon)$ as $n\to\infty$ and $q(\epsilon)\to 0$ as $\epsilon\to0$. Then, there exists a sequence of positive numbers $\{\epsilon_n\}_n$ such that $\epsilon_n\to0$ and $q_n(\epsilon_n)\to 0$.
\end{lemma}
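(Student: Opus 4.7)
The statement is a classical diagonal extraction result, and the plan is to build the sequence $\{\epsilon_n\}_n$ by a standard interleaving argument, selecting $\epsilon_n$ to be a fixed value $1/k$ over a long enough block of indices so that the double limit can be controlled.

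My first step is to fix a reference null sequence, say $\eta_k = 1/k$. By hypothesis, $q(\eta_k) \to 0$ as $k \to \infty$, and for each fixed $k$ we have $q_n(\eta_k) \to q(\eta_k)$ as $n \to \infty$. Hence, for every $k \in \mathbb{N}$, there exists an index $N_k$ such that
\[ |q_n(\eta_k) - q(\eta_k)| < \frac{1}{k}, \qquad \forall\, n \ge N_k. \]
Without loss of generality I can assume that the sequence $\{N_k\}_k$ is strictly increasing and $N_k \to \infty$ (if not, replace $N_k$ with $\max(N_{k-1}+1, N_k, k)$).

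The second step is the diagonal construction: for each $n \ge N_1$, let $k(n)$ be the largest integer $k$ with $N_k \le n$ (so that $n \in [N_{k(n)}, N_{k(n)+1})$), and set $\epsilon_n = \eta_{k(n)} = 1/k(n)$; for $n < N_1$, set $\epsilon_n$ arbitrarily positive, e.g., $\epsilon_n = 1$. Since $N_k \to \infty$, we have $k(n) \to \infty$ as $n \to \infty$, hence $\epsilon_n = 1/k(n) \to 0$.

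The final step is to verify that $q_n(\epsilon_n) \to 0$. For $n \ge N_1$, by construction $n \ge N_{k(n)}$, and so
\[ |q_n(\epsilon_n)| = |q_n(\eta_{k(n)})| \le |q(\eta_{k(n)})| + \frac{1}{k(n)}. \]
Both terms on the right tend to $0$ as $n \to \infty$ (the first by the hypothesis $q(\epsilon) \to 0$ as $\epsilon \to 0$, the second trivially), which gives the conclusion. I do not foresee any real obstacle here: the only subtle point is ensuring $N_k \to \infty$ so that $k(n)$ is well-defined and diverges, but this is guaranteed by the monotonization step above.
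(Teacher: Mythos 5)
Your proof is correct and follows essentially the same diagonal extraction argument as the paper: choose $\eta_k = 1/k$, extract an increasing sequence of thresholds $N_k$ from the pointwise convergence $q_n(\eta_k)\to q(\eta_k)$, and define $\epsilon_n$ to be constant equal to $1/k$ on each block $[N_k,N_{k+1})$, after which the estimate $|q_n(\epsilon_n)|\le|q(\epsilon_n)|+\epsilon_n$ gives the conclusion. The paper's proof is the same construction written out more informally with a bullet list; you have simply made the monotonization of the $N_k$ and the definition of $k(n)$ a bit more explicit.
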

\begin{proof}
Since $q_n(\epsilon)\to q(\epsilon)$ for every $\epsilon>0$,
\begin{itemize}[leftmargin=*,nolistsep]
	\item for $\epsilon=1$ there exists $n_1$ such that $|q_n(1)-q(1)|\le1$ for $n\ge n_1$, \vspace{3pt}
	\item for $\epsilon=\frac12$ there exists $n_2>n_1$ such that $|q_n(\frac12)-q(\frac12)|\le\frac12$ for $n\ge n_2$, \vspace{3pt}
	\item for $\epsilon=\frac13$ there exists $n_3>n_2$ such that $|q_n(\frac13)-q(\frac13)|\le\frac13$ for $n\ge n_3$, \vspace{3pt}
	\item $\ldots$
\end{itemize}
Define
\begin{itemize}[leftmargin=*,nolistsep]
	\item $\epsilon_n=1$ for $n<n_2$, \vspace{3pt}
	\item $\epsilon_n=\frac12$ for $n_2\le n<n_3$, \vspace{3pt}
	\item $\epsilon_n=\frac13$ for $n_3\le n<n_4$, \vspace{3pt}
	\item $\ldots$
\end{itemize}
By construction, $\epsilon_n\to0$ and $|q_n(\epsilon_n)-q(\epsilon_n)|\le\epsilon_n$ for $n\ge n_2$, so $|q_n(\epsilon_n)|\le|q(\epsilon_n)|+\epsilon_n$ for $n\ge n_2$ and $q_n(\epsilon_n)\to0$.
\end{proof}

\begin{proof}[Proof of Theorem~{\rm\ref{thm:mmd}}]
Let $\theta_{i,n}=a+i(b-a)/d_n$, $i=1,\ldots,d_n$. It is clear that the grid $\{\theta_{i,n}\}_{i=1,\ldots,d_n}\subset[a,b]$ is a.u.\ in $[a,b]$.
Hence, by Corollary~\ref{eccoGi-n}, for every $n$ there exists a permutation $\tau_n$ of $\{1,\ldots,d_n\}$ such that
\begin{equation}\label{tal_eq}
\max_{i=1,\ldots,d_n}|f(\theta_{i,n})-\lambda_{\tau_n(i),n}|=\max_{i=1,\ldots,d_n}|f(\theta_{i,n})-\mu_{i,n}|=\epsilon_n\to0,
\end{equation}
where for simplicity we have set $\mu_{i,n}=\lambda_{\tau_n(i),n}$ for all $i=1,\ldots,d_n$.
Moreover, $\Lambda_n\subseteq f([a,b])$ by hypothesis. This implies that, for every $n$ and every $i=1,\ldots,d_n$, the set $f^{-1}(\mu_{i,n})$ is finite (by Corollary~\ref{f-1(l)finito-d}) and non-empty. Thus, we can define the grid $\mathcal G_n=\{x_{i,n}\}_{i=1,\ldots,d_n}\subset[a,b]$ such that, for every $n$ and every $i=1,\ldots,d_n$, the point $x_{i,n}$ is chosen as one of the closest point to $\theta_{i,n}$ in $f^{-1}(\mu_{i,n})$, i.e.,
\begin{equation}\label{xin-def}
f(x_{i,n})=\mu_{i,n},\qquad |x_{i,n}-\theta_{i,n}|=\min_{x\in f^{-1}(\mu_{i,n})}|x-\theta_{i,n}|=\min_{\substack{x\in[a,b]:\\f(x)=\mu_{i,n}}}|x-\theta_{i,n}|.
\end{equation}
We show that $\mathcal G_n$ is a.u.\ in $[a,b]$ (provided that we arrange its points in increasing order). Once this is done, the theorem is proved.

For every $\delta,\epsilon\ge0$ and every $n$, we define the ``bad'' sets
\begin{align*}
E_{\delta,\epsilon}&=\left\{x\in[a+\delta,b-\delta]:[f(x)-\epsilon,f(x)+\epsilon]\not\subseteq f([x-\delta,x+\delta])\right\}\cup[a,a+\delta)\cup(b-\delta,b],\\
\mathcal E_{\delta,n}&=\{i\in\{1,\ldots,d_n\}:\theta_{i,n}\in E_{\delta,\epsilon_n}\}.
\end{align*}
We call them ``bad'' sets, because if $i\not\in\mathcal E_{\delta,n}$, i.e., $\theta_{i,n}\not\in E_{\delta,\epsilon_n}$, then ``things are fine'' in the sense that $|x_{i,n}-\theta_{i,n}|\le\delta$. In formulas, for every $\delta>0$, every $n$ and every $i=1,\ldots,d_n$, we have
\begin{equation}\label{result}
i\in(\mathcal E_{\delta,n})^c\quad\iff\quad\theta_{i,n}\in(E_{\delta,\epsilon_n})^c\quad\implies\quad|x_{i,n}-\theta_{i,n}|\le\delta,
\end{equation}
where $(\mathcal E_{\delta,n})^c$ is the complement of $\mathcal E_{\delta,n}$ in $\{1,\ldots,d_n\}$ and $(E_{\delta,\epsilon_n})^c$ is the complement of $E_{\delta,\epsilon_n}$ in $[a,b]$. To prove \eqref{result}, suppose that $\theta_{i,n}\in(E_{\delta,\epsilon_n})^c$. Then, by definition of $E_{\delta,\epsilon_n}$, we have $\theta_{i,n}\in[a+\delta,b-\delta]$ and $[f(\theta_{i,n})-\epsilon_n,f(\theta_{i,n})+\epsilon_n]\subseteq f([\theta_{i,n}-\delta,\theta_{i,n}+\delta])$. Since $\mu_{i,n}\in[f(\theta_{i,n})-\epsilon_n,f(\theta_{i,n})+\epsilon_n]$ by \eqref{tal_eq}, we infer that $\mu_{i,n}\in f([\theta_{i,n}-\delta,\theta_{i,n}+\delta])$. Hence, there exists $y_{i,n}\in[\theta_{i,n}-\delta,\theta_{i,n}+\delta]$ such that $f(y_{i,n})=\mu_{i,n}$. But then we have $y_{i,n}\in f^{-1}(\mu_{i,n})$ and $|y_{i,n}-\theta_{i,n}|\le\delta$, which implies $|x_{i,n}-\theta_{i,n}|\le|y_{i,n}-\theta_{i,n}|\le\delta$ by our choice of $x_{i,n}$ as one of the closest point to $\theta_{i,n}$ in $f^{-1}(\mu_{i,n})$; see \eqref{xin-def}. This concludes the proof of \eqref{result}.

Now, let $a=\xi_0<\xi_1<\ldots<\xi_k<\xi_{k+1}=b$ be the local maximum points, local minimum points, and discontinuity points of $f$, to which we also add the boundary points $\xi_0=a$ and $\xi_{k+1}=b$. For every $j=0,\ldots,k$, the function $f$ is continuous on $(\xi_j,\xi_{j+1})$ and has no local maximum/minimum points on $(\xi_j,\xi_{j+1})$, so it is strictly monotone on $(\xi_j,\xi_{j+1})$. Thus, by Lemma~\ref{ucf-1} applied to $f:[\xi_j+\delta/2,\xi_{j+1}-\delta/2]\to\mathbb R$, for every $j=0,\ldots,k$ and every $\delta>0$ there exists $\epsilon^{(j,\delta)}>0$ such that
\[ [f(x)-\epsilon^{(j,\delta)},f(x)+\epsilon^{(j,\delta)}]\subseteq f([x-\delta/2,x+\delta/2])\subseteq f([x-\delta,x+\delta]),\qquad\forall\,x\in[\xi_j+\delta,\xi_{j+1}-\delta]. \]
Hence, for every $\delta>0$ there exists $\epsilon^{(\delta)}=\min_{j=0,\ldots,k}\epsilon^{(j,\delta)}>0$ such that
\begin{equation}\label{eps_delta-cup}
[f(x)-\epsilon^{(\delta)},f(x)+\epsilon^{(\delta)}]\subseteq f([x-\delta,x+\delta]),\qquad\forall\,x\in\bigcup_{j=0}^k[\xi_j+\delta,\xi_{j+1}-\delta].
\end{equation}

For every $\delta>0$, let $n_\delta$ be such that $\epsilon_n\le\epsilon^{(\delta)}$ for $n\ge n_\delta$. If $n\ge n_\delta$ and $i\in\{1,\ldots,d_n\}$ is an index such that $\theta_{i,n}\in\bigcup_{j=0}^k[\xi_j+\delta,\xi_{j+1}-\delta]$, then in particular $\theta_{i,n}\in[a+\delta,b-\delta]$ and, by \eqref{eps_delta-cup},
\[ [f(\theta_{i,n})-\epsilon_n,f(\theta_{i,n})+\epsilon_n]\subseteq[f(\theta_{i,n})-\epsilon^{(\delta)},f(\theta_{i,n})+\epsilon^{(\delta)}]\subseteq f([\theta_{i,n}-\delta,\theta_{i,n}+\delta]). \]
Hence, $\theta_{i,n}\not\in E_{\delta,\epsilon_n}$, i.e., $i\not\in\mathcal E_{\delta,n}$. In follows that, for every $\delta>0$ and every $n\ge n_\delta$,
\begin{align*}
|\mathcal E_{\delta,n}|&=|\{i\in\{1,\ldots,d_n\}:\theta_{i,n}\in E_{\delta,\epsilon_n}\}|\le\biggl|\biggl\{i\in\{1,\ldots,d_n\}:\theta_{i,n}\in\bigcup_{j=0}^{k+1}[\xi_j-\delta,\xi_j+\delta]\biggr\}\biggr|\le(k+2)\left(2\delta\,\frac{d_n}{b-a}+1\right),
\end{align*}
where the latter inequality is due to Lemma~\ref{conteggio_griglia}. We can therefore choose, by Lemma~\ref{Ex3.3}, a sequence of positive numbers $\{\delta_n\}_n$ such that $\delta_n\to0$ and $|\mathcal E_{\delta_n,n}|/d_n\to0$.

To conclude the proof, let $\mathcal G'_n=\{x'_{i,n}\}_{i=1,\ldots,d_n}$ be the sequence of $d_n$ points defined as follows:
\[ x'_{i,n}=\left\{\begin{aligned}&x_{i,n}, &&\mbox{if }\,\theta_{i,n}\in(E_{\delta_n,\epsilon_n})^c,\\ &\theta_{i,n}, &&\mbox{if }\,\theta_{i,n}\in E_{\delta_n,\epsilon_n}.\end{aligned}\right. \]
$\mathcal G'_n\subset[a,b]$ and $\mathcal G'_n$ is a.u.\ in $[a,b]$ because its distance from the a.u.\ grid $\{\theta_{i,n}\}_{i=1,\ldots,d_n}$ is uniformly bounded by $\delta_n\to0$. Indeed, by \eqref{result},
\[ \max_{i=1,\ldots,d_n}|x'_{i,n}-\theta_{i,n}|=\max_{\substack{i\in\{1,\ldots,d_n\}:\\\theta_{i,n}\in(E_{\delta_n,\epsilon_n})^c}}|x_{i,n}-\theta_{i,n}|\le\delta_n. \]
The grid $\mathcal G'_n$ differs from the original grid $\mathcal G_n$ by at most $2|\mathcal E_{\delta_n,\epsilon_n}|=o(d_n)$ elements, in the sense that
$|\mathcal G'_n\triangle\mathcal G_n|\le2|\mathcal E_{\delta_n,\epsilon_n}|$.
Thus, by Lemma~\ref{fonda.u.}, $\mathcal G_n$ is a.u.\ in $[a,b]$ (provided that we arrange its points in increasing order).
\end{proof}

\subsection{Concatenation lemma}
The following lemma is a plain consequence of Definition~\ref{dd} and has often been used in the literature, but lucid statement and proof have never been provided.
We therefore provide the details below.

\begin{lemma}[\textbf{concatenation lemma}]\label{lem:concat}
Let $\{A_n\}_n$ be a matrix-sequence, 
let $f:[a,b]\to\mathbb C^{r\times r}$ be measurable, and suppose that $\{A_n\}_n\sim_\lambda f$. 
Let $\lambda_1(f),\ldots,\lambda_r(f):[a,b]\to\mathbb C$ be $r$ measurable functions such that $\lambda_1(f(x)),\ldots,\lambda_r(f(x))$ are the eigenvalues of $f(x)$ for every $x\in[a,b]$. Then $\{A_n\}_n\sim_\lambda\tilde f$, where $\tilde f$ is the concatenation of resized versions of $\lambda_1(f),\ldots,\lambda_r(f)$ given by
\begin{equation*}
\tilde f:[0,1]\to\mathbb C,\qquad \tilde f(y)=\left\{\begin{aligned}
&\textstyle{\lambda_1(f(a+(b-a)ry)),} &&\textstyle{0\le y<\frac1r,}\\
&\textstyle{\lambda_2(f(a+(b-a)(ry-1))),} &&\textstyle{\frac1r\le y<\frac2r,}\\
&\textstyle{\lambda_3(f(a+(b-a)(ry-2))),} &&\textstyle{\frac2r\le y<\frac3r,}\\
&\qquad\vdots && \qquad\vdots\\
&\textstyle{\lambda_r(f(a+(b-a)(ry-r+1))),} &&\textstyle{\frac{r-1}r\le y\le1.}\\
\end{aligned}\right.
\end{equation*}
\end{lemma}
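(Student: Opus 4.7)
The plan is to reduce everything to verifying a single identity between integrals, and then to close that identity by an elementary change of variables. By hypothesis, $\{A_n\}_n\sim_\lambda f$ means that for every $F\in C_c(\mathbb C)$,
\[
\lim_{n\to\infty}\frac1{d_n}\sum_{i=1}^{d_n}F(\lambda_i(A_n))=\frac1{b-a}\int_a^b\frac{\sum_{\ell=1}^rF(\lambda_\ell(f(x)))}{r}\,\mathrm dx,
\]
while the desired conclusion $\{A_n\}_n\sim_\lambda\tilde f$ (applied to the scalar-valued function $\tilde f$ on $\Omega=[0,1]$, so that $k=1$ and $\mu_1(\Omega)=1$) reads, for the same class of test functions $F$,
\[
\lim_{n\to\infty}\frac1{d_n}\sum_{i=1}^{d_n}F(\lambda_i(A_n))=\int_0^1 F(\tilde f(y))\,\mathrm dy.
\]
So the task reduces to the purely analytic identity
\[
\int_0^1 F(\tilde f(y))\,\mathrm dy=\frac1{b-a}\int_a^b\frac{\sum_{\ell=1}^rF(\lambda_\ell(f(x)))}{r}\,\mathrm dx,
\]
together with a brief verification that $\tilde f$ is measurable so that Definition~\ref{dd} is well-posed when applied to $\tilde f$.

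First I would observe that $\tilde f$ is measurable: on each of the $r$ pairwise disjoint subintervals of the form $[\frac{\ell-1}{r},\frac{\ell}{r}]$ it is the composition of the measurable function $\lambda_\ell(f):[a,b]\to\mathbb C$ with the affine map $y\mapsto a+(b-a)(ry-\ell+1)$, which sends that subinterval bijectively onto $[a,b]$. Hence $\tilde f$ is measurable on $[0,1]$.

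Next I would split the integral on the left-hand side along the natural partition of $[0,1]$ and apply, in the $\ell$-th piece, the change of variables $x=a+(b-a)(ry-\ell+1)$, under which $\mathrm dx=(b-a)r\,\mathrm dy$ and the endpoints $y=\frac{\ell-1}{r}$, $y=\frac{\ell}{r}$ map respectively to $x=a$, $x=b$. This transforms the $\ell$-th piece into
\[
\int_{(\ell-1)/r}^{\ell/r}F(\lambda_\ell(f(a+(b-a)(ry-\ell+1))))\,\mathrm dy=\frac1{(b-a)r}\int_a^b F(\lambda_\ell(f(x)))\,\mathrm dx.
\]
Summing over $\ell=1,\ldots,r$ yields exactly the right-hand side of the required identity, and the lemma follows.

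There is no real obstacle here: the statement is essentially tautological once one unwinds Definition~\ref{dd}. The only subtlety worth highlighting in the write-up is that the sum $\sum_{\ell=1}^rF(\lambda_\ell(f(x)))$ is invariant under any measurable reordering of the eigenvalue functions $\lambda_1(f),\ldots,\lambda_r(f)$ of $f$, so both the hypothesis $\{A_n\}_n\sim_\lambda f$ and the identity above are independent of the chosen enumeration, even though the pointwise values of $\tilde f$ do depend on it.
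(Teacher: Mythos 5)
Your proposal is correct and follows essentially the same route as the paper: reduce to the integral identity via Definition~\ref{dd}, split $[0,1]$ into the $r$ subintervals, and apply the affine change of variables on each piece. The added measurability check and the remark on permutation-invariance are sound observations that the paper leaves implicit.
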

\begin{proof}
The result follows from Definition~\ref{dd}, after observing that, for every $F\in C_c(\mathbb C)$,
\begin{align*}
\int_0^1F(\tilde f(y)){\rm d}y&=\sum_{i=1}^r\int_{(i-1)/r}^{i/r}F(\lambda_i(f(a+(b-a)(ry-i+1)))){\rm d}y\\
&=\sum_{i=1}^r\frac{1}{(b-a)r}\int_a^b F(\lambda_i(f(x))){\rm d}x=\frac1{b-a}\int_a^b\frac{\sum_{i=1}^rF(\lambda_i(f(x)))}r{\rm d}x,
\end{align*}
where in the second equality we have used the change of variable formula for the Lebesgue integral. 
\end{proof}

\subsection{Restriction operator and asymptotic spectral distribution of restricted matrix-sequences}
For every $n\ge1$, let $\Xi_n$ be the uniform grid in $[0,1]$ given by
$\Xi_n=\{\frac i{n+1}:i=1,\ldots,n\}$.
If $E\subseteq\mathbb R$, we define $d_n^E$ as the number of points of $\Xi_n$ inside $E$, i.e., $d_n^E=|\Xi_n\cap E|$.
If $A$ is a square matrix of size~$n$ and $E\subseteq\mathbb R$, we define $R_E(A)$ as the principal submatrix of $A$ of size $d_n^E$ obtained from $A$ by selecting the rows and columns corresponding to indices $i\in\{1,\ldots,n\}$ such that $\frac i{n+1}\in E$. 
For the proof of the next lemma, see \cite[Lemma~4.9]{barbarinoREDUCED}.

\begin{lemma}\label{lem:different_grids}
Let $\Omega\subseteq[0,1]$ be a regular set with $\mu_1(\Omega)>0$, let $\{\Gamma_n\}_n$ be a sequence of measurable sets contained in $[0,1]$, and suppose that
$d_{d_n}^{\Omega\triangle\Gamma_n}\to\infty$ and $d_{d_n}^{\Omega\triangle\Gamma_n}=o(d_n)$.
Then, for every matrix-sequence $\{A_n\}_n$ formed by Hermitian matrices with $A_n$ of size $d_n$, we have the equivalence
\[ \{R_\Omega(A_n)\}_n\sim_\lambda f\quad\iff\quad\{R_{\Gamma_n}(A_n)\}_n\sim_\lambda f. \]
\end{lemma}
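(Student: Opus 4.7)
The plan is to sandwich $R_\Omega(A_n)$ and $R_{\Gamma_n}(A_n)$ around their common principal submatrix $R_{\Omega\cap\Gamma_n}(A_n)$, which sits inside both and differs from each by only $o(d_n)$ rows and columns. Set $s_n=d_{d_n}^\Omega$, $t_n=d_{d_n}^{\Gamma_n}$, and $c_n=d_{d_n}^{\Omega\cap\Gamma_n}$, so that $s_n-c_n=d_{d_n}^{\Omega\setminus\Gamma_n}\le d_{d_n}^{\Omega\triangle\Gamma_n}=o(d_n)$ and similarly $t_n-c_n=o(d_n)$. Because $\Omega$ is regular with $\mu_1(\Omega)>0$, the Jordan-measurability forces $s_n/d_n\to\mu_1(\Omega)>0$; since also $|s_n-t_n|\le d_{d_n}^{\Omega\triangle\Gamma_n}=o(d_n)$, we get $t_n/d_n\to\mu_1(\Omega)$ as well, and hence $(s_n-c_n)/s_n\to 0$, $(t_n-c_n)/t_n\to 0$. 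It therefore suffices to prove the single assertion
\begin{equation*}
\{R_\Omega(A_n)\}_n\sim_\lambda f\iff\{R_{\Omega\cap\Gamma_n}(A_n)\}_n\sim_\lambda f,
\end{equation*}
as the symmetric statement with $\Gamma_n$ in place of $\Omega$ follows identically and the two equivalences can be chained.

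For this single assertion I would first permute so that the block $B_n:=R_{\Omega\cap\Gamma_n}(A_n)$ sits in the top-left corner of $R_\Omega(A_n)$, and then inflate back to the original size by defining $\tilde B_n=B_n\oplus 0_{s_n-c_n}$. Two observations are then immediate. First, the Hermitian difference $R_\Omega(A_n)-\tilde B_n$ has non-zero entries only in its last $s_n-c_n$ rows and columns, so $\mathrm{rank}(R_\Omega(A_n)-\tilde B_n)\le 2(s_n-c_n)=o(s_n)$. Second, a direct trace computation yields, for every $F\in C_c(\mathbb C)$,
\begin{equation*}
\frac{1}{s_n}\sum_{i=1}^{s_n}F(\lambda_i(\tilde B_n))=\frac{c_n}{s_n}\cdot\frac{1}{c_n}\sum_{i=1}^{c_n}F(\lambda_i(B_n))+\frac{s_n-c_n}{s_n}F(0),
\end{equation*}
and since $c_n/s_n\to 1$ and $(s_n-c_n)/s_n\to 0$, this gives at once $\{B_n\}_n\sim_\lambda f\iff\{\tilde B_n\}_n\sim_\lambda f$.

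It remains to show $\{R_\Omega(A_n)\}_n\sim_\lambda f\iff\{\tilde B_n\}_n\sim_\lambda f$, i.e., that a Hermitian perturbation of rank $o(s_n)$ preserves the spectral distribution; this is the standard GLT low-rank-perturbation fact and I expect it to be the technical heart of the argument. The route I would take uses the Weyl-type bound $|N_{R_\Omega(A_n)}(t)-N_{\tilde B_n}(t)|\le\mathrm{rank}(R_\Omega(A_n)-\tilde B_n)\le 2(s_n-c_n)$ for all $t\in\mathbb R$, where $N_M(t)=|\{i:\lambda_i(M)\le t\}|$. For $F\in C_c(\mathbb R)$ of class $C^1$, integration by parts then gives $|\mathrm{tr}\,F(R_\Omega(A_n))-\mathrm{tr}\,F(\tilde B_n)|\le 2(s_n-c_n)\|F'\|_1$, so that dividing by $s_n$ produces an $o(1)$ term. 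The hard part will be extending from $C^1$ to general $F\in C_c(\mathbb R)$, because $\|A_n\|$ is not assumed bounded; here the compact support of $F$ saves the day. Uniformly approximating $F$ by $C^1$ functions $F_\epsilon$ with the same bounded support, each error term $\frac{1}{s_n}|\mathrm{tr}\,(F-F_\epsilon)(M)|$ with $M\in\{R_\Omega(A_n),\tilde B_n\}$ is bounded by $\|F-F_\epsilon\|_\infty\le\epsilon$ (only at most $s_n$ eigenvalues can lie in the support of $F-F_\epsilon$), while the $C^1$ remainder vanishes as $n\to\infty$; letting $\epsilon\to 0$ concludes the proof.
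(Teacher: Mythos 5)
Your argument is correct, and it proceeds by the natural strategy: sandwich $R_\Omega(A_n)$ and $R_{\Gamma_n}(A_n)$ around the common principal submatrix $R_{\Omega\cap\Gamma_n}(A_n)$, observe that each differs from it by $o(d_n)$ rows and columns, and then invoke the stability of the asymptotic spectral distribution of Hermitian sequences under low-rank perturbations. The preliminary counting, including the deduction $s_n/d_n\to\mu_1(\Omega)$ from Peano--Jordan measurability (Riemann-integrability of $\chi_\Omega$) and hence $t_n/d_n\to\mu_1(\Omega)$, is sound, and the chaining of the two equivalences through $R_{\Omega\cap\Gamma_n}(A_n)$ is legitimate (the sizes $c_n$ indeed diverge). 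Note that the paper does not give its own proof of this lemma---it cites [Lemma~4.9] of the reference on reduced GLT sequences---so a line-by-line comparison is not possible; but your route is essentially the expected one.

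The one place where you do more work than needed is the ``technical heart'' at the end. You re-derive from scratch that a Hermitian rank-$o(d_n)$ perturbation preserves the asymptotic spectral distribution, via the rank-sensitive bound on the counting functions, integration by parts against $F'$ for $F\in C^1_c$, and a mollification step. This is correct (the only cosmetic inaccuracy is ``with the same bounded support'': mollification slightly enlarges the support, which is harmless). However, exactly this fact is already packaged for you in the paper: $R_\Omega(A_n)-\tilde B_n$ is Hermitian of rank at most $2(s_n-c_n)=o(s_n)$, so $\{R_\Omega(A_n)-\tilde B_n\}_n\sim_\sigma 0$ by Lemma~\ref{0s} (take $R_n=R_\Omega(A_n)-\tilde B_n$, $N_n=0$), and then Lemma~\ref{Ex5.3} gives $\{R_\Omega(A_n)\}_n\sim_\lambda f\iff\{\tilde B_n\}_n\sim_\lambda f$ directly, since both $\tilde B_n$ and the difference are Hermitian. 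Citing those two lemmas would have shortened the last third of your proof to two lines while keeping the structure identical. Everything else---the trace identity for $\tilde B_n=B_n\oplus 0$ and the use of $c_n/s_n\to 1$---is clean and correct.
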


\subsection{GLT sequences}
Let $\{A_n\}_n$ be a matrix-sequence and let $\kappa:[0,1]\times[-\pi,\pi]\to\mathbb C$ be a measurable function. We say that $\{A_n\}_n$ is a GLT sequence with symbol $\kappa$, and we write $\{A_n\}_n\sim_{\rm GLT}\kappa$, if the pair $(\{A_n\}_n,\kappa)$ satisfies some special properties that are not reported here as they are difficult to formulate.
The interested reader is referred to \cite{SbMath} and \cite[Chapter~8]{GLTbookI} for details. Here, we only collect the properties of GLT sequences that we need in the proof of Theorem~\ref{eccoGi'}.
The first property is reported in the next lemma \cite[Lemma~5.1]{barbarinoREDUCED}.

\begin{lemma}\label{lem:reduced_hermitian_GLT}
Let $\{A_n\}_n$ be a matrix-sequence formed by Hermitian matrices, 
let $\kappa:[0,1]\times[-\pi,\pi]\to\mathbb C$ be measurable, and suppose that $\{A_n\}_n\sim_{\rm GLT}\kappa$. Then,
$\{R_\Omega(A_n)\}_n\sim_\lambda\kappa|_{\Omega\times[-\pi,\pi]}$
for every regular set $\Omega\subseteq[0,1]$.
\end{lemma}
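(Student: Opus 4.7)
The plan is to reduce $R_\Omega(A_n)$, up to a block of zero eigenvalues, to a product of two GLT sequences and then unfold the definitions. Introduce the real diagonal projection $D_n^\Omega$ of size $d_n$ defined by $(D_n^\Omega)_{ii}=\chi_\Omega(i/(d_n+1))$. Because $\Omega$ is regular, $\chi_\Omega$ is bounded and continuous a.e.\ on $[0,1]$, so the standard characterization of diagonal-sampling GLT sequences gives $\{D_n^\Omega\}_n\sim_{\rm GLT}\chi_\Omega(x)$ (viewing $\chi_\Omega$ as a function on $[0,1]\times[-\pi,\pi]$ constant in $\theta$). A simultaneous row/column permutation bringing the indices $i$ with $i/(d_n+1)\in\Omega$ to the top-left turns $D_n^\Omega A_n D_n^\Omega$ into the block-diagonal matrix $R_\Omega(A_n)\oplus 0_{d_n-d_{d_n}^\Omega}$, so the spectrum of $D_n^\Omega A_n D_n^\Omega$ is the multiset union of the spectrum of $R_\Omega(A_n)$ with $d_n-d_{d_n}^\Omega$ zeros.

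Now I would apply the product rule of the GLT algebra twice to obtain $\{D_n^\Omega A_n D_n^\Omega\}_n\sim_{\rm GLT}\chi_\Omega(x)^2\kappa(x,\theta)=\chi_\Omega(x)\kappa(x,\theta)$. Since $A_n$ is Hermitian and $D_n^\Omega$ is real diagonal, $D_n^\Omega A_n D_n^\Omega$ is Hermitian, and the Hermitian-GLT spectral distribution theorem yields $\{D_n^\Omega A_n D_n^\Omega\}_n\sim_\lambda\chi_\Omega(x)\kappa(x,\theta)$. Unfolding this via Definition~\ref{dd} and splitting the sum on the left (according to whether each eigenvalue comes from the $R_\Omega(A_n)$ block or from the zero block) and the integral on the right (according to the partition $[0,1]=\Omega\cup\Omega^c$), I obtain, for every $F\in C_c(\mathbb C)$,
\begin{equation*}
\frac{d_{d_n}^\Omega}{d_n}\cdot\frac{1}{d_{d_n}^\Omega}\sum_{i=1}^{d_{d_n}^\Omega}F(\lambda_i(R_\Omega(A_n)))+\frac{d_n-d_{d_n}^\Omega}{d_n}F(0)\longrightarrow\frac{1}{2\pi}\int_\Omega\!\!\int_{-\pi}^\pi F(\kappa(x,\theta))\,{\rm d}\theta\,{\rm d}x+(1-\mu_1(\Omega))F(0).
\end{equation*}

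To finish, Riemann integrability of $\chi_\Omega$ gives $d_{d_n}^\Omega/d_n\to\mu_1(\Omega)>0$, so the coefficient of $F(0)$ on the left converges to $1-\mu_1(\Omega)$ and cancels the stray $F(0)$ on the right; dividing by $\mu_1(\Omega)$ then leaves exactly the defining limit of $\{R_\Omega(A_n)\}_n\sim_\lambda\kappa|_{\Omega\times[-\pi,\pi]}$ (keeping in mind that $\mu_2(\Omega\times[-\pi,\pi])=2\pi\mu_1(\Omega)$). The main obstacle is conceptual rather than computational: the argument relies on three black-box facts from GLT theory (diagonal samplings of Riemann integrable functions form GLT sequences with the obvious symbol, the *-algebra closure under products, and the Hermitian spectral distribution theorem), all of which are developed in \cite[Chapter~8]{GLTbookI} but whose careful formulation is deliberately suppressed in the present paper.
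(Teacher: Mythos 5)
The paper does not give its own proof of this lemma: it cites it verbatim from \cite[Lemma~5.1]{barbarinoREDUCED}, where the reduced GLT framework is developed systematically. So there is no internal argument to compare against. Your proposal is a correct, self-contained re-derivation that bypasses the reduced GLT machinery and works directly with the ordinary GLT axioms (diagonal sampling, closure under products, the Hermitian spectral distribution theorem). The key steps — the unitary equivalence of $D_n^\Omega A_n D_n^\Omega$ with $R_\Omega(A_n)\oplus 0_{d_n-d_{d_n}^\Omega}$, the symbol $\chi_\Omega(x)\kappa(x,\theta)$, the cancellation of the $F(0)$ mass on both sides, the Riemann-sum limit $d_{d_n}^\Omega/d_n\to\mu_1(\Omega)$, and the normalization $\mu_2(\Omega\times[-\pi,\pi])=2\pi\mu_1(\Omega)$ — are all sound.

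Two points worth making explicit. First, both your argument and the statement itself require $\mu_1(\Omega)>0$: otherwise the right-hand side of Definition~\ref{dd} is not defined, the division by $\mu_1(\Omega)$ fails, and $d_{d_n}^\Omega$ need not tend to infinity, so $\{R_\Omega(A_n)\}_n$ would not be a matrix-sequence. Second, the diagonal-sampling GLT axiom is usually stated on the grid $\{i/n\}_{i=1,\ldots,n}$, whereas $R_\Omega$ (and hence your $D_n^\Omega$) is built on $\Xi_{d_n}=\{i/(d_n+1)\}_{i=1,\ldots,d_n}$. To invoke it as a black box you should either cite a grid-robust formulation or add a one-line check that ${\rm diag}_i\bigl(\chi_\Omega(i/(d_n+1))-\chi_\Omega(i/d_n)\bigr)$ is zero-distributed and then appeal to the stability of GLT symbols under zero-distributed perturbations: the $i$th entry is $\pm1$ only if $[i/(d_n+1),\,i/d_n]$ meets $\partial\Omega$, and since these intervals are pairwise disjoint with lengths $\le1/(d_n+1)$ while $\mu_1(\partial\Omega)=0$, the number of such $i$ is $o(d_n)$. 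Neither point changes the overall structure, which is correct and arguably more transparent than importing the reduced GLT framework.
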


The second property is reported in the next lemma \cite[Theorem~2]{sm}. 

\begin{lemma}\label{lem:main_spectral}
Let $g:[0,1]\to\mathbb C$ be measurable and let $\{D_n\}_n$ be a matrix-sequence formed by diagonal matrices such that $\{D_n\}_n\sim_\lambda g$. Then, there exists a matrix-sequence formed by permutation matrices $\{P_n\}_n$ such that $\{P_nD_nP_n^T\}_n\sim_{\rm GLT}\kappa(x,\theta)=g(x)$.
\end{lemma}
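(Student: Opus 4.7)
The plan is a ``sort-and-rematch'' construction: rearrange the diagonal entries of $D_n$ so that, after the permutation, they become uniformly close to samples of $g$ on the uniform grid $\{j/d_n\}_{j=1}^{d_n}$, and then invoke the foundational fact that diagonal sampling sequences are GLT with the sampled function as symbol (viewed as independent of the $\theta$ variable).

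First I would handle the case where $g:[0,1]\to\mathbb R$ is continuous with $\mathcal{ER}(g)=[\inf_{[0,1]}g,\sup_{[0,1]}g]$. Let $\mu_{1,n}\le\cdots\le\mu_{d_n,n}$ be the diagonal entries of $D_n$ sorted in increasing order (after truncating an $o(d_n)$ set of outliers, which amounts only to a zero-distributed perturbation and does not affect the final GLT symbol). Let $\sigma_n$ be the permutation of $\{1,\ldots,d_n\}$ for which $g(\sigma_n(1)/d_n)\le\cdots\le g(\sigma_n(d_n)/d_n)$, and define $P_n$ so that $(P_nD_nP_n^T)_{\sigma_n(i),\sigma_n(i)}=\mu_{i,n}$. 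Corollary~\ref{eccoGi-n} applied with $f=g$, the a.u.\ grid $\{i/d_n\}$, and $\Lambda_n=\{\mu_{i,n}\}_i$ yields $\max_i|g(\sigma_n(i)/d_n)-\mu_{i,n}|\to 0$, which after reindexing $j=\sigma_n(i)$ becomes $\max_j|g(j/d_n)-(P_nD_nP_n^T)_{jj}|\to 0$. Hence $P_nD_nP_n^T$ differs from $\mathrm{diag}(g(j/d_n))_{j=1}^{d_n}$ by a diagonal perturbation vanishing in spectral norm, and since the sampling sequence $\{\mathrm{diag}(g(j/d_n))_{j=1}^{d_n}\}_n$ is a standard GLT sequence with symbol $g(x)$, the closure of GLT sequences under zero-distributed perturbations gives $\{P_nD_nP_n^T\}_n\sim_{\rm GLT}g(x)$.

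For general measurable $g:[0,1]\to\mathbb C$, I would approximate by continuous functions $g^{(m)}\to g$ in measure (via Lusin's theorem on the real and imaginary parts, arranging $\mathcal{ER}(g^{(m)})$ to be a closed interval for each coordinate), construct permutations $P_n^{(m)}$ at each level by the sort-and-rematch argument above, and splice them by a diagonal extraction $m=m(n)\to\infty$ chosen via Lemma~\ref{Ex3.3} applied to the relevant defects. The resulting sequence $\{P_n^{(m(n))}D_n P_n^{(m(n))T}\}_n$ would then be a GLT sequence with symbol $g(x)$ via the a.c.s.\ closure axiom of GLT theory, combined with the fact that $\{\mathrm{diag}(g^{(m)}(j/d_n))\}_n$ form an a.c.s.\ for any target with symbol $g(x)$ as $m\to\infty$.

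The principal obstacle is precisely this final reduction. The rearrangement-based machinery developed in the excerpt applies only to real, bounded, a.e.\ continuous functions with $\mathcal{ER}$ equal to the closed interval between extrema; extending to general complex measurable $g$ genuinely requires the full a.c.s.\ apparatus of GLT theory (in particular, the closure of GLT sequences under a.c.s.\ convergence together with in-measure convergence of symbols, plus the stability of GLT symbols under simultaneous row/column permutations), which is why the statement is cited from \cite{sm} rather than derived from first principles here. A secondary subtlety is the complex-valued case: since monotone rearrangement is intrinsically real-valued, the sort-and-rematch step cannot be applied directly to a complex $g$, and one must instead either treat real and imaginary parts separately through the a.c.s.\ framework or work with piecewise-constant approximations whose level sets can be matched up combinatorially.
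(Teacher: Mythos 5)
The paper does not actually prove this lemma: it is quoted verbatim from \cite[Theorem~2]{sm}, and the body of the present paper only uses it as a black box in the proof of Lemma~\ref{ss1}. So there is no ``paper's own proof'' against which to compare your attempt, and the right frame is to judge your sketch on its own.

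Your sort-and-rematch argument for the nice case (real, bounded, a.e.\ continuous $g$ with $\mathcal{ER}(g)=[\inf g,\sup g]$) is a plausible and genuinely different route from what \cite{sm} does: you leverage the paper's own Corollary~\ref{eccoGi-n} to align the sorted diagonal entries with sorted samples of $g$, then conclude via the standard GLT facts that diagonal sampling sequences of (Riemann integrable) functions are GLT and that GLT is stable under perturbations that vanish in spectral norm. Provided you handle the $o(d_n)$ outliers as a separate zero-distributed (rank-$o(d_n)$) diagonal correction, this is correct and uses only what is already in the paper; it is also not circular, since Corollary~\ref{eccoGi-n} is proved independently of Lemma~\ref{lem:main_spectral}.

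The genuine gap is in the reduction to general measurable complex $g$, and it is a bit deeper than you acknowledge. Your plan to ``construct permutations $P_n^{(m)}$ at each level by the sort-and-rematch argument above'' does not go through as stated: that argument requires $\{\Lambda_n\}_n\sim g^{(m)}$ in order to invoke Corollary~\ref{eccoGi-n}, but you only have $\{\Lambda_n\}_n\sim g$, and approximating $g$ by $g^{(m)}$ in measure does not give $\{\Lambda_n\}_n\sim g^{(m)}$. So the ``nice-case engine'' cannot be restarted with $g^{(m)}$ in place of $g$. What one actually needs is a matching of the sorted diagonal entries against the monotone rearrangement $g^\dag$ (or its two-dimensional analogue for complex $g$) \emph{of $g$ itself}, combined with a measure-preserving-rearrangement argument to pass from a monotone profile back to the prescribed shape $g(x)$, with all of this then threaded through the a.c.s.\ closure axiom. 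The complex-valued case is a further obstruction you correctly identify: there is no one-dimensional monotone rearrangement to anchor the sort step, so a two-dimensional (or measure-theoretic) matching is required. These are exactly the ingredients supplied in \cite{sm}; as a blind reconstruction your sketch identifies the right building blocks and the right obstacles, but the final reduction step would not compile as written.
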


The third property is reported in the next lemma, which has never appeared in the literature.

\begin{lemma}[\textbf{splitting of GLT sequences formed by diagonal matrices}]\label{lem:reduced_GLT_split}
Let $g:[0,1]\to\mathbb C$ be measurable and let $\{\Lambda_n=\{\lambda_{1,n},\ldots,\lambda_{d_n,n}\}\}_n$ be a sequence of finite multisets of real numbers such that $d_n\to\infty$ as $n\to\infty$. Assume the following.
\begin{itemize}[nolistsep,leftmargin=*]
	\item $\{D_n\}_n\sim_{\rm GLT}\kappa(x,\theta)=g(x)$ with $D_n={\rm diag}(\lambda_{1,n},\ldots,\lambda_{d_n,n})$.
	\item $\{L_{n,1}\}_n,\ldots,\{L_{n,k}\}_n$ are sequences of natural numbers such that $L_{n,1}+\ldots+L_{n,k}=d_n$ for every $n$ and $L_{n,j}/d_n\to1/k$ as $n\to\infty$ for every $j=1,\ldots,k$.
\end{itemize}
Then
\[ \left\{\mathop{\rm diag}_{i=1,\ldots,L_{n,j}}(\lambda_{L_{n,1}+\ldots+L_{n,j-1}+i,n})\right\}_n\sim_\lambda g|_{[(j-1)/k,j/k]},\qquad j=1,\ldots,k. \]
\end{lemma}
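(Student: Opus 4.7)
The plan is to identify the target block $\mathop{\rm diag}_{i=1,\ldots,L_{n,j}}(\lambda_{L_{n,1}+\ldots+L_{n,j-1}+i,n})$ with a principal submatrix of $D_n$ of the form $R_{\Gamma_n^{(j)}}(D_n)$ for a carefully chosen interval $\Gamma_n^{(j)}\subset[0,1]$, and then to transfer the spectral distribution from $R_{\Omega^{(j)}}(D_n)$ with $\Omega^{(j)}=[(j-1)/k,j/k]$ via the two structural lemmas (Lemma \ref{lem:reduced_hermitian_GLT} and Lemma \ref{lem:different_grids}). Fix $j\in\{1,\ldots,k\}$ and set $S_{n,j}=L_{n,1}+\ldots+L_{n,j}$ (with $S_{n,0}=0$). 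Define
\[ \alpha_n^{(j)}=\frac{S_{n,j-1}+1/2}{d_n+1},\qquad \beta_n^{(j)}=\frac{S_{n,j}+1/2}{d_n+1},\qquad \Gamma_n^{(j)}=[\alpha_n^{(j)},\beta_n^{(j)}]. \]
A direct check shows that an integer $i\in\{1,\ldots,d_n\}$ satisfies $i/(d_n+1)\in\Gamma_n^{(j)}$ if and only if $S_{n,j-1}+1\le i\le S_{n,j}$. Since $D_n$ is diagonal, this gives the exact identification
\[ R_{\Gamma_n^{(j)}}(D_n)=\mathop{\rm diag}_{i=1,\ldots,L_{n,j}}(\lambda_{S_{n,j-1}+i,n}). \]

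Next I would exploit the hypothesis $L_{n,j}/d_n\to 1/k$ for all $j$, which implies $S_{n,j}/d_n\to j/k$ and hence $\alpha_n^{(j)}\to(j-1)/k$, $\beta_n^{(j)}\to j/k$. Consequently the symmetric difference $\Omega^{(j)}\triangle\Gamma_n^{(j)}$ is contained in the union of two intervals whose total Lebesgue measure tends to zero, so the number of grid points $d_{d_n}^{\Omega^{(j)}\triangle\Gamma_n^{(j)}}$ is $o(d_n)$. If for a given sequence this count fails to diverge (e.g., the endpoints happen to align with grid points), we replace $\beta_n^{(j)}$ by $\beta_n^{(j)}+\eta_n$ with $\eta_n\to 0$ slowly enough that $\eta_n d_n\to\infty$; the new restriction differs from $R_{\Gamma_n^{(j)}}(D_n)$ by at most a single additional diagonal entry per sufficiently large $n$, so this cosmetic perturbation does not affect the argument but guarantees the full hypotheses of Lemma \ref{lem:different_grids}.

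Now $\Omega^{(j)}$ is a regular subset of $[0,1]$ with $\mu_1(\Omega^{(j)})=1/k>0$, and the diagonal matrices $D_n$ are Hermitian with $\{D_n\}_n\sim_{\rm GLT}\kappa(x,\theta)=g(x)$ by hypothesis. Lemma \ref{lem:reduced_hermitian_GLT} then yields
\[ \{R_{\Omega^{(j)}}(D_n)\}_n\sim_\lambda \kappa\big|_{\Omega^{(j)}\times[-\pi,\pi]}=g\big|_{\Omega^{(j)}}=g\big|_{[(j-1)/k,j/k]}. \]
Applying Lemma \ref{lem:different_grids} with $\Omega=\Omega^{(j)}$ and $\Gamma_n=\Gamma_n^{(j)}$ transfers this to $\{R_{\Gamma_n^{(j)}}(D_n)\}_n\sim_\lambda g|_{[(j-1)/k,j/k]}$, which is exactly the desired conclusion in view of the identification in the first paragraph.

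The main obstacle I anticipate is the bookkeeping in the identification $R_{\Gamma_n^{(j)}}(D_n)=\mathop{\rm diag}_{i=1,\ldots,L_{n,j}}(\lambda_{S_{n,j-1}+i,n})$: the restriction operator is defined via the shifted grid $\{i/(d_n+1)\}$, so one must pick the endpoints $\alpha_n^{(j)},\beta_n^{(j)}$ with the half-integer offset used above in order to hit precisely the intended index range. The secondary technicality, as noted, is making sure that Lemma \ref{lem:different_grids} applies without degeneracy; this is handled by the small endpoint perturbation described above.
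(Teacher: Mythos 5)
Your approach is essentially the same as the paper's: identify the target block as a restriction $R_{\Gamma}(D_n)$, establish $\{R_{\Omega^{(j)}}(D_n)\}_n\sim_\lambda g|_{\Omega^{(j)}}$ via Lemma~\ref{lem:reduced_hermitian_GLT}, then transfer via Lemma~\ref{lem:different_grids}. The paper takes $\Gamma_{n,j}$ to be the finite set $\{\tfrac{S_{n,j-1}+i}{d_n+1}:i=1,\ldots,L_{n,j}\}$ itself rather than a surrounding interval (cosmetically different but equivalent, since $R_E$ only sees which grid points lie in $E$), and it proves $d_{d_n}^{\Omega_j\triangle\Gamma_{n,j}}=o(d_n)$ by an explicit count using Lemma~\ref{conteggio_griglia}; your measure-shrinking argument reaches the same conclusion and could be made just as rigorous by invoking that lemma.

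One point, though, is mis-stated. You worry about the hypothesis $d_{d_n}^{\Omega\triangle\Gamma_n}\to\infty$ in Lemma~\ref{lem:different_grids} and propose replacing $\beta_n^{(j)}$ by $\beta_n^{(j)}+\eta_n$ with $\eta_n d_n\to\infty$, claiming this adds ``at most a single additional diagonal entry.'' That is incorrect: enlarging the interval by $\eta_n$ brings in roughly $\eta_n(d_n+1)\to\infty$ new grid points, so the perturbed restriction $R_{\Gamma'_n}(D_n)$ differs from the target block by an unbounded (though still $o(d_n)$, provided also $\eta_n\to0$) number of entries; you would then need an additional rank-perturbation step (Lemma~\ref{0s} together with Lemma~\ref{Ex5.3}) to transfer the distribution from the perturbed restriction back to the actual block. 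The concern you raise is a legitimate one and, notably, the paper's own proof does not verify the divergence hypothesis of Lemma~\ref{lem:different_grids} either; but the patch as you wrote it is not self-consistent.
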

\begin{proof}
Let $\Omega_j=[(j-1)/k,j/k]$ for $j=1,\ldots,k$, and fix $j\in\{1,\ldots,k\}$. Since $\{D_n\}_n\sim_{\rm GLT}\kappa(x,\theta)=g(x)$ and the matrices $D_n$ are Hermitian, by Lemma~\ref{lem:reduced_hermitian_GLT} we have $\{R_{\Omega_j}(D_n)\}_n \sim_\lambda \kappa|_{\Omega_j\times[-\pi,\pi]}$, which is equivalent to
\begin{equation}\label{iff_R}
\{R_{\Omega_j}(D_n)\}_n\sim_\lambda g|_{\Omega_j}.
\end{equation}
Let
\[ \Gamma_{n,j}= \left\{\frac{L_{n,1} +\ldots + L_{n,j-1} + i}{d_n+1}: i=1,\ldots,L_{n,j}\right\}. \]
For every $n$, we have
{\allowdisplaybreaks\begin{align*}
d_{d_n}^{\Omega_j\triangle\Gamma_{n,j}} &= \left|\left\{i\in\{1,\ldots,d_n\}:\frac{i}{d_n+1}\in\Omega_j\triangle\Gamma_{n,j}\right\}\right| \\
&=\left|\left\{i\in\{1,\ldots,d_n\}:\frac{i}{d_n+1}\in\Omega_j\setminus\Gamma_{n,j}\right\}\right|+\left|\left\{i\in\{1,\ldots,d_n\}: \frac{i}{d_n+1} \in \Gamma_{n,j}\setminus\Omega_j\right\}\right|\\
&=\left|\left\{i\in\{1,\ldots,d_n\}:\frac{i}{d_n+1}\in\Omega_j,\ \frac{i}{d_n+1}\le\frac{L_{n,1}+\ldots+L_{n,j-1}}{d_n+1}\right\}\right|\\*
&\qquad+\left|\left\{i\in\{1,\ldots,d_n\}:\frac{i}{d_n+1}\in\Omega_j,\ \frac{i}{d_n+1}>\frac{L_{n,1}+\ldots+L_{n,j}}{d_n+1}\right\}\right| \\*
&\qquad+\left|\left\{i\in\{1,\ldots,d_n\}:\frac{i}{d_n+1}\in\Gamma_{n,j},\ \frac{i}{d_n+1}<\frac{j-1}k\right\}\right|\\*
&\qquad+\left|\left\{i\in\{1,\ldots,d_n\}:\frac{i}{d_n+1}\in\Gamma_{n,j},\ \frac{i}{d_n+1}>\frac jk\right\}\right|\\
&\le\color{blue}\left|\left\{i\in\{1,\ldots,d_n\}:\frac{j-1}k\le\frac{i}{d_n+1}\le\frac{L_{n,1}+\ldots+L_{n,j-1}}{d_n+1}\right\}\right|\\*
&\qquad+\left|\left\{i\in\{1,\ldots,d_n\}:\frac{L_{n,1}+\ldots+L_{n,j}}{d_n+1}<\frac{i}{d_n+1}\le\frac jk\right\}\right|\\*
&\qquad\color{blue}+\left|\left\{i\in\{1,\ldots,d_n\}:\frac{L_{n,1}+\ldots+L_{n,j-1}}{d_n+1}<\frac{i}{d_n+1}<\frac{j-1}k\right\}\right|\\*
&\qquad+\left|\left\{i\in\{1,\ldots,d_n\}:\frac{j}k<\frac{i}{d_n+1}\le\frac{L_{n,1}+\ldots+L_{n,j}}{d_n+1}\right\}\right|\\
&\le\color{blue}\left|\left\{i\in\{1,\ldots,d_n\}:\min\left(\frac{j-1}k,\frac{L_{n,1}+\ldots+L_{n,j-1}}{d_n+1}\right)\le\frac{i}{d_n+1}\le\max\left(\frac{j-1}k,\frac{L_{n,1}+\ldots+L_{n,j-1}}{d_n+1}\right)\right\}\right|\\
&\qquad+\left|\left\{i\in\{1,\ldots,d_n\}:\min\left(\frac{j}k,\frac{L_{n,1}+\ldots+L_{n,j}}{d_n+1}\right)\le\frac{i}{d_n+1}\le\max\left(\frac{j}k,\frac{L_{n,1}+\ldots+L_{n,j}}{d_n+1}\right)\right\}\right|\\
&\le\left\lfloor\,\left|\frac{(d_n+1)(j-1)}{k}-L_{n,1}-\ldots-L_{n,j-1}\right|\,\right\rfloor+\left\lfloor\,\left|\frac{(d_n+1)j}{k}-L_{n,1}-\ldots-L_{n,j}\right|\,\right\rfloor+2,
\end{align*}}
where the last inequality is due to Lemma~\ref{conteggio_griglia}.
Since $L_{n,i}/d_n \to 1/k$ as $n\to\infty$ for every $i=1,\ldots,k$ by assumption, we have 
\[ \frac1{d_n}\left|\frac{(d_n+1)(j-1)}{k}-L_{n,1}-\ldots-L_{n,j-1}\right|\to0,\qquad\frac1{d_n}\left|\frac{(d_n+1)j}{k}-L_{n,1}-\ldots-L_{n,j}\right|\to0, \]
and so $d_{d_n}^{\Omega_j\triangle\Gamma_{n,j}} = o(d_n)$.
Therefore, by \eqref{iff_R} and Lemma~\ref{lem:different_grids}, 
\[ \{R_{\Gamma_{n,j}}(D_n)\}_n=\left\{\mathop{\rm diag}_{i=1,\ldots,L_{n,j}}(\lambda_{L_{n,1}+\ldots+L_{n,j-1}+i,n})\right\}_n\sim_\lambda g|_{\Omega_j}. \tag*{\qedhere} \]
\end{proof}

\subsection{Proof of Lemma~\ref{ss1}}

We have now collected all the ingredients to prove Lemma~\ref{ss1}.

\begin{proof}[Proof of Lemma~{\rm\ref{ss1}}]
By Lemma~\ref{lem:concat}, the hypothesis $\{D_n\}_n\sim_\lambda f$ is equivalent to $\{D_n\}_n\sim_\lambda\tilde f$, where $\tilde f:[0,1]\to\mathbb R$ is a concatenation of resized versions of the functions $f_1,\ldots,f_k$. More precisely,
\[ \tilde f(x)=f_j(a+(b-a)(kx-j+1)),\qquad x\in\left[\frac{j-1}{k},\frac jk\right),\qquad j=1,\ldots,k,\qquad\tilde f(1)=f_k(b). \]
By Lemma~\ref{lem:main_spectral}, there exists a sequence $\{\tau_n\}_n$ such that $\tau_n$ is a permutation of $\{1,\ldots,d_n\}$ and
\[ \{\tilde D_n={\rm diag}(\lambda_{\tau_n(1),n},\ldots,\lambda_{\tau_n(d_n),n})\}_n\sim_{\rm GLT}\tilde f(x). \]
By Lemma~\ref{lem:reduced_GLT_split}, we conclude that
\[ \left\{\mathop{\rm diag}_{i=1,\ldots,L_{n,j}}(\lambda_{\tau_n(L_{n,1}+\ldots+L_{n,j-1}+i),n})\right\}_n\sim_\lambda \tilde f|_{[(j-1)/k,j/k]},\qquad j=1,\ldots,k. \]
Since
\[ k\int_{(j-1)/k}^{j/k}\tilde f|_{[(j-1)/k,j/k]}(y){\rm d}y=k\int_{(j-1)/k}^{j/k}f_j(a+(b-a)(ky-j+1)){\rm d}y=\frac1{b-a}\int_a^bf_j(x){\rm d}x \]
(this is proved by direct computation using the change of variable formula for the Lebesgue integral as in the proof of Lemma~\ref{lem:concat}),
the thesis is proved with 
\begin{align*}
D_{n,j}&=\mathop{\rm diag}_{i=1,\ldots,L_{n,j}}(\lambda_{\tau_n(L_{n,1}+\ldots+L_{n,j-1}+i),n}),\\
\Lambda_{n,j}&=\{\lambda_{\tau_n(L_{n,1}+\ldots+L_{n,j-1}+i),n}:i=1,\ldots,L_{n,j}\}. \qedhere
\end{align*}
\end{proof}

\subsection{Proof of Lemma~\ref{ss2.g}}

In order to prove Lemma~\ref{ss2.g}, we need some auxiliary results. The first result is reported in the next lemma \cite[Theorem~3.1]{GLTbookI}.

\begin{lemma}\label{s-attr}
If $\{A_n\}_n\sim_\lambda f$, then
\[ \lim_{n\to\infty}\frac{|\{i\in\{1,\ldots,d_n\}:\lambda_i(A_n)\not\in(\mathcal{ER}(f))_\epsilon\}|}{d_n}=0,\qquad\forall\,\epsilon>0, \]
where $d_n$ is the size of $A_n$.
\end{lemma}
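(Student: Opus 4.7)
The plan is to construct, for each $\epsilon>0$, a single test function $F\in C_c(\mathbb C)$ that, when plugged into Definition~\ref{dd}, forces the right-hand side to vanish while the left-hand side majorises $d_n^{-1}$ times the cardinality of the bad set, modulo a correction coming from eigenvalues escaping to infinity. Controlling this escape is a tightness step I would do first. For each $R>0$ pick a cutoff $G_R\in C_c(\mathbb C)$ with $0\le G_R\le1$, $G_R\equiv1$ on $\overline{B(0,R)}$, and $G_R\equiv0$ outside $B(0,R+1)$. Applying Definition~\ref{dd} to $G_R$,
\[ \lim_{n\to\infty}\frac{1}{d_n}\sum_{i=1}^{d_n}G_R(\lambda_i(A_n))=\frac{1}{\mu_d(\Omega)}\int_\Omega\frac{\sum_{j=1}^kG_R(\lambda_j(f(\xx)))}{k}{\rm d}\xx, \]
and the right-hand side tends to $1$ as $R\to\infty$ by bounded convergence (each $\lambda_j(f(\xx))$ is a finite complex number, so $G_R(\lambda_j(f(\xx)))\to1$ pointwise). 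Since $\sum_iG_R(\lambda_i(A_n))\le|\{i:|\lambda_i(A_n)|<R+1\}|$, for every $\eta>0$ there exists $R_\eta>0$ with
\[ \limsup_{n\to\infty}\frac{|\{i:|\lambda_i(A_n)|\ge R_\eta+1\}|}{d_n}\le\eta. \]

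Once escape is controlled, I would build $F$ by a Urysohn separation. Set $A=\overline{B(0,R_\eta+1)}\setminus(\mathcal{ER}(f))_\epsilon$, which is compact and at distance $\ge\epsilon$ from the closed set $\mathcal{ER}(f)$. The disjoint closed sets $A$ and $\mathcal{ER}(f)\cup(\mathbb C\setminus B(0,R_\eta+2))$ admit a continuous separator $F:\mathbb C\to[0,1]$ with $F\equiv1$ on $A$, $F\equiv0$ on $\mathcal{ER}(f)$, and $\operatorname{supp}(F)\subseteq\overline{B(0,R_\eta+2)}$; in particular $F\in C_c(\mathbb C)$. Since $\lambda_j(f(\xx))\in\mathcal{ER}(f)$ for a.e.\ $\xx$, the right-hand side of Definition~\ref{dd} applied to $F$ vanishes, and using $F\equiv1$ on $A$ I obtain
\[ \frac{|\{i:\lambda_i(A_n)\not\in(\mathcal{ER}(f))_\epsilon\}|}{d_n}\le\frac{1}{d_n}\sum_{i=1}^{d_n}F(\lambda_i(A_n))+\frac{|\{i:|\lambda_i(A_n)|\ge R_\eta+1\}|}{d_n}. \]
Taking $\limsup_{n\to\infty}$, the first summand on the right tends to $0$ and the second is at most $\eta$. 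Letting $\eta\to0$ concludes.

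The main obstacle is the gap between the unbounded ``bad'' set $(\mathcal{ER}(f))_\epsilon^c$ appearing in the statement and the compact-support requirement on admissible test functions in Definition~\ref{dd}: one cannot directly cut off the bad set with a single $F\in C_c(\mathbb C)$, which is precisely what forces the preliminary tightness step and the two-layered estimate above. A minor technical nuisance, only visible in the matrix-valued case $k\ge1$, is the fact that $\lambda_j(f(\xx))\in\mathcal{ER}(f)$ a.e., which has to be inferred from the scalar statement ``$f\in\mathcal{ER}(f)$ a.e.'' together with the natural definition of the essential range of a matrix-valued function as the set of $z\in\mathbb C$ with $\mu_d\{\xx:\exists\,j,\ \lambda_j(f(\xx))\in D(z,\epsilon)\}>0$ for all $\epsilon>0$.
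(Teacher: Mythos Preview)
Your argument is correct. The two-step scheme---first establishing tightness via cutoff functions $G_R$, then separating the compact set $\overline{B(0,R_\eta+1)}\setminus(\mathcal{ER}(f))_\epsilon$ from $\mathcal{ER}(f)$ by a Urysohn function $F\in C_c(\mathbb C)$---is exactly the standard way to pass from vague convergence to clustering inside the essential range, and your bookkeeping of the inequalities is sound. The only place I would tighten the exposition is the tightness step: the inequality $\sum_iG_R(\lambda_i(A_n))\le|\{i:|\lambda_i(A_n)|<R+1\}|$ is what you \emph{use}, but the logical chain (take $\liminf$, subtract from $1$, then choose $R_\eta$) is left to the reader.

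As for comparison with the paper: the paper does not prove this lemma at all. It is quoted as \cite[Theorem~3.1]{GLTbookI} and used as a black box in the proof of Lemma~\ref{ss2.g}. Your proof is essentially the proof one finds in that reference, so there is no alternative route to compare against. Your remark on the matrix-valued case is also on target: in \cite{GLTbookI} the essential range of a matrix-valued symbol is defined precisely as the support of the probability measure $\mu_f$ satisfying $\int_\Omega\frac{1}{k}\sum_jF(\lambda_j(f(\xx)))\,{\rm d}\xx=\mu_d(\Omega)\int_{\mathbb C}F\,{\rm d}\mu_f$, which makes ``$\lambda_j(f(\xx))\in\mathcal{ER}(f)$ for a.e.\ $\xx$ and all $j$'' automatic.
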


The second result is reported in the next lemma  \cite[Theorem~3.2]{GLTbookI}.
In what follows, the notation $\{Z_n\}_n\sim_\sigma0$ means that $\{Z_n\}_n$ is a matrix-sequence with an asymptotic singular value distribution described by the identically zero function defined on any subset $\Omega$ of some $\mathbb R^d$ with $0<\mu_d(\Omega)<\infty$. Hence, regardless of $\Omega$, the notation $\{Z_n\}_n\sim_\sigma0$ means that $\frac1{d_n}\sum_{i=1}^{d_n}F(\sigma_i(Z_n))\to F(0)$ for all $F\in C_c(\mathbb R)$, where $d_n$ is the size of $Z_n$.

\begin{lemma}\label{0s}
Let $\{Z_n\}_n$ be a matrix-sequence with $Z_n$ of size $d_n$. We have $\{Z_n\}_n\sim_\sigma0$ if and only if $Z_n=R_n+N_n$ for every $n$ with
$\lim_{n\to\infty}(d_n)^{-1}{\rm rank}(R_n)=\lim_{n\to\infty}\|N_n\|=0$.
\end{lemma}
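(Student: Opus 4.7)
The plan is to prove the two implications separately, relying on the Weyl-type interlacing inequality for singular values, $\sigma_{i+j-1}(A+B)\le\sigma_i(A)+\sigma_j(B)$, together with the characterization of the symbol $0$ as producing concentration of singular values near $0$.

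For the easy direction ($\Leftarrow$), assume $Z_n=R_n+N_n$ with $r_n:=\mathrm{rank}(R_n)=o(d_n)$ and $\|N_n\|\to 0$. Since $\sigma_{r_n+1}(R_n)=0$, the Weyl inequality with $j=r_n+1$, $i=1$ gives $\sigma_{r_n+i}(Z_n)\le\sigma_{r_n+1}(R_n)+\sigma_i(N_n)\le\|N_n\|$ for every $i\ge 1$, so at most $r_n$ singular values of $Z_n$ can exceed $\|N_n\|$. For any $F\in C_c(\mathbb R)$ (hence bounded, say $|F|\le M$, and continuous at $0$), I split
\[
\frac1{d_n}\sum_{i=1}^{d_n}F(\sigma_i(Z_n))=\frac1{d_n}\sum_{i=1}^{r_n}F(\sigma_i(Z_n))+\frac1{d_n}\sum_{i=r_n+1}^{d_n}F(\sigma_i(Z_n)).
\]
The first sum is bounded in absolute value by $Mr_n/d_n\to 0$, while in the second sum every argument lies in $[0,\|N_n\|]$ and, by continuity of $F$ at $0$, $F(\sigma_i(Z_n))\to F(0)$ uniformly in $i\in\{r_n+1,\ldots,d_n\}$; hence the second sum tends to $F(0)$. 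Thus $\{Z_n\}_n\sim_\sigma 0$.

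For the harder direction ($\Rightarrow$), assume $\{Z_n\}_n\sim_\sigma 0$. For each $\epsilon>0$, choose $F_\epsilon\in C_c(\mathbb R)$ with $0\le F_\epsilon\le 1$, $F_\epsilon(0)=0$, and $F_\epsilon\equiv 1$ on $[\epsilon,\infty)$. Setting $k_n(\epsilon):=|\{i:\sigma_i(Z_n)>\epsilon\}|$, we have
\[
\frac{k_n(\epsilon)}{d_n}\le\frac1{d_n}\sum_{i=1}^{d_n}F_\epsilon(\sigma_i(Z_n))\longrightarrow F_\epsilon(0)=0,
\]
so $q_n(\epsilon):=k_n(\epsilon)/d_n\to q(\epsilon):=0$ for every $\epsilon>0$, and trivially $q(\epsilon)\to 0$ as $\epsilon\to 0$. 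Applying Lemma~\ref{Ex3.3} yields a sequence $\epsilon_n\to 0^+$ with $k_n(\epsilon_n)/d_n\to 0$.

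With such $\epsilon_n$ in hand, I perform an SVD truncation: write $Z_n=U_n\Sigma_n V_n^*$ with $\Sigma_n=\mathrm{diag}(\sigma_1(Z_n),\ldots,\sigma_{d_n}(Z_n))$ sorted in decreasing order, and split $\Sigma_n=\Sigma_n^{(R)}+\Sigma_n^{(N)}$ where $\Sigma_n^{(R)}$ keeps the entries exceeding $\epsilon_n$ and zeros out the others, and $\Sigma_n^{(N)}$ does the opposite. Defining $R_n=U_n\Sigma_n^{(R)}V_n^*$ and $N_n=U_n\Sigma_n^{(N)}V_n^*$, we have $Z_n=R_n+N_n$, $\mathrm{rank}(R_n)\le k_n(\epsilon_n)$ so $\mathrm{rank}(R_n)/d_n\to 0$, and $\|N_n\|\le\epsilon_n\to 0$. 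The main (mild) obstacle is the diagonal extraction producing $\epsilon_n$, but this is precisely what Lemma~\ref{Ex3.3} delivers, so the argument is complete.
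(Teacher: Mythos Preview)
The paper does not prove this lemma; it simply cites \cite[Theorem~3.2]{GLTbookI}. Your argument is essentially the standard one found in that reference, and the overall strategy---Weyl interlacing for the $(\Leftarrow)$ direction, and SVD truncation combined with the diagonal extraction of Lemma~\ref{Ex3.3} for the $(\Rightarrow)$ direction---is correct.

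There is, however, one technical slip in the $(\Rightarrow)$ direction: you ask for $F_\epsilon\in C_c(\mathbb R)$ with $F_\epsilon\equiv 1$ on $[\epsilon,\infty)$, but a function with bounded support cannot be identically $1$ on an unbounded set, so no such $F_\epsilon$ exists. The fix is immediate: take instead $F_\epsilon\in C_c(\mathbb R)$ with $0\le F_\epsilon\le 1$, $F_\epsilon(0)=1$, and $\mathrm{supp}(F_\epsilon)\subseteq(-\epsilon,\epsilon)$. Then
\[
1-\frac{k_n(\epsilon)}{d_n}=\frac{|\{i:\sigma_i(Z_n)\le\epsilon\}|}{d_n}\ge\frac1{d_n}\sum_{i=1}^{d_n}F_\epsilon(\sigma_i(Z_n))\longrightarrow F_\epsilon(0)=1,
\]
which gives $k_n(\epsilon)/d_n\to 0$ as required, and the rest of your argument goes through unchanged. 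A minor notational point in the $(\Leftarrow)$ direction: you write ``with $j=r_n+1$, $i=1$'' but then let $i$ vary; simply drop the ``$i=1$'' and apply Weyl's inequality with fixed $j=r_n+1$ and arbitrary $i\ge 1$.
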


The third result is reported in the next lemma \cite[Exercise~5.3]{GLTbookI}.

\begin{lemma}\label{Ex5.3}
Let $\{X_n\}_n$ and $\{Y_n\}_n$ be matrix-sequences formed by Hermitian matrices, with $X_n$ and $Y_n$ of the same size. If $\{X_n\}_n\sim_\lambda f$ and $\{Y_n\}_n\sim_\sigma0$ then $\{X_n+Y_n\}_n\sim_\lambda f$.
\end{lemma}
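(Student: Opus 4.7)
The plan is to decompose the Hermitian perturbation $Y_n$ into a small-norm part and a low-rank part, handle each separately, and recombine. Since $\{Y_n\}_n\sim_\sigma0$, Lemma~\ref{0s} provides $Y_n=R_n+N_n$ with $\mathrm{rank}(R_n)/d_n\to 0$ and $\|N_n\|\to 0$. Using that $Y_n$ is Hermitian, I would replace $R_n$ by $(R_n+R_n^*)/2$ and $N_n$ by $Y_n-(R_n+R_n^*)/2$: this at worst doubles the rank of $R_n$ and preserves $\|N_n\|\to 0$, so without loss of generality both $R_n$ and $N_n$ are Hermitian. An alternative construction is to diagonalize $Y_n=U_n\Lambda_nU_n^*$ and split $\Lambda_n$ into entries of absolute value above versus below a threshold $\varepsilon_n\to 0$ chosen via Lemma~\ref{Ex3.3}.

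The small-norm part is handled by Weyl's perturbation theorem: since $X_n+N_n$ is Hermitian, $\max_i|\lambda_i(X_n+N_n)-\lambda_i(X_n)|\le\|N_n\|\to 0$. Every $F\in C_c(\mathbb{C})$ is uniformly continuous on its bounded support, so denoting its modulus of continuity by $\omega_F$ we have
\[ \frac{1}{d_n}\left|\sum_{i=1}^{d_n}F(\lambda_i(X_n+N_n))-\sum_{i=1}^{d_n}F(\lambda_i(X_n))\right|\le\omega_F(\|N_n\|)\to 0, \]
which together with $\{X_n\}_n\sim_\lambda f$ yields $\{X_n+N_n\}_n\sim_\lambda f$.

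The low-rank part is the crux. Write $X_n+Y_n=(X_n+N_n)+R_n$ with $r_n:=\mathrm{rank}(R_n)=o(d_n)$. The Weyl-type interlacing for rank-$r_n$ Hermitian perturbations (applied to the cumulative eigenvalue-counting functions) yields
\[ \bigl|\#\{i:\lambda_i(X_n+Y_n)\le t\}-\#\{i:\lambda_i(X_n+N_n)\le t\}\bigr|\le r_n,\qquad\forall\,t\in\mathbb{R}. \]
For a Lipschitz $F$ with support in an interval $[a,b]\subset\mathbb{R}$, a Riemann--Stieltjes integration by parts using this pointwise bound then gives $|\mathrm{tr}\,F(X_n+Y_n)-\mathrm{tr}\,F(X_n+N_n)|\le\mathrm{Lip}(F)(b-a)r_n$, so dividing by $d_n$ yields a vanishing bound. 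For an arbitrary $F\in C_c(\mathbb{C})$, approximate $F$ uniformly by a Lipschitz $G$ with $\|F-G\|_\infty\le\varepsilon$; using the trivial bound $|\mathrm{tr}\,F(A)-\mathrm{tr}\,G(A)|\le d_n\|F-G\|_\infty$ valid for any Hermitian $A$ of size $d_n$, we obtain
\[ \frac{1}{d_n}\bigl|\mathrm{tr}\,F(X_n+Y_n)-\mathrm{tr}\,F(X_n+N_n)\bigr|\le 2\varepsilon+\frac{r_n}{d_n}\mathrm{Lip}(G)(b-a), \]
so $\limsup_{n\to\infty}$ of the left-hand side is $\le 2\varepsilon$, and letting $\varepsilon\to 0$ shows the difference tends to $0$. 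Combining with the small-norm step, $\{X_n+Y_n\}_n\sim_\lambda f$ follows.

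The main obstacle is this final low-rank step: $R_n$ can have arbitrarily large spectral norm, so Weyl's perturbation theorem is useless for it, and one must instead exploit the rank-based eigenvalue-counting interlacing together with a Lipschitz (or bounded-variation) approximation argument to transfer the bound from regular test functions to the full class $C_c(\mathbb{C})$.
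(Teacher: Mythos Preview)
Your proof is correct. The paper itself does not prove this lemma but merely cites it as \cite[Exercise~5.3]{GLTbookI}, so there is no in-paper argument to compare against; that said, your route---decompose $Y_n$ via Lemma~\ref{0s} into a low-rank plus small-norm sum, Hermitianize by taking $(R_n+R_n^*)/2$ and $(N_n+N_n^*)/2$, handle the small-norm part by Weyl's perturbation theorem plus uniform continuity of $F$, and handle the low-rank part via the rank-interlacing bound on the eigenvalue counting functions combined with a Lipschitz approximation of the test function---is precisely the standard argument one finds in the GLT literature. Two minor remarks: in the Hermitianization step, note that $Y_n-(R_n+R_n^*)/2=(N_n+N_n^*)/2$ (since $Y_n=Y_n^*$), which makes the norm bound immediate; and in the final approximation step, make sure the Lipschitz approximant $G$ is also chosen with compact support so that the factor $(b-a)$ is finite.
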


The last result is the following lemma of graph theory. In what follows, given a directed graph $\mathcal G=(V,E)$ and any two nodes $i,j\in V$, a directed path from $i$ to $j$ is any sequence of nodes $i_1i_2\ldots i_q$ such that $i_1=i$, $i_q=j$, and $(i_a,i_{a+1})\in E$ for all $a=1,\ldots,q-1$. Note that a directed path from a node $i$ to itself always exists (take the sequence $i$ consisting only of the node $i$).

\begin{lemma}\label{lem:graph}
Let $X$ be a finite set, and let $A_1,\dots,A_k$ and $B_1,\dots,B_k$ be two partitions of $X$ with $|A_i| = |B_i|$ for every $i=1,\ldots,k$.
Let $\mathcal G = (V,E)$ be a directed graph on $k$ nodes $V=\{1,\ldots,k\}$ such that a directed edge $(i,j)\in E$ exists if and only if $A_i\cap B_j$ is not empty. 
If $(i,j)\in E$ then there exists a directed path from $j$ to $i$.
\end{lemma}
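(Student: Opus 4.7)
My plan is to prove the contrapositive-style argument by counting. Let $R \subseteq V = \{1,\ldots,k\}$ denote the set of nodes reachable from $j$ via a directed path (with the convention that $j \in R$ via the trivial path). The goal is to show $i \in R$, from which the directed path from $j$ to $i$ follows by definition of $R$.

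The first step is to observe that $R$ is closed under outgoing edges: if $\ell \in R$ and $(\ell,p) \in E$, then concatenating the path from $j$ to $\ell$ with the edge $(\ell,p)$ gives a path from $j$ to $p$, so $p \in R$. Translating this through the edge definition, for every $\ell \in R$ and every $p \notin R$ we must have $A_\ell \cap B_p = \emptyset$. Taking the union over $\ell \in R$ yields
\[
\Bigl(\bigcup_{\ell \in R} A_\ell\Bigr) \cap B_p = \emptyset \qquad \text{for all } p \notin R,
\]
and since $B_1,\ldots,B_k$ is a partition of $X$, this is equivalent to the inclusion
\[
\bigcup_{\ell \in R} A_\ell \;\subseteq\; \bigcup_{\ell \in R} B_\ell.
\]

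The second step is the counting argument. Because both $A_1,\ldots,A_k$ and $B_1,\ldots,B_k$ are partitions of $X$ and $|A_\ell|=|B_\ell|$ for every $\ell$, the disjoint unions appearing above have the same cardinality:
\[
\Bigl|\bigcup_{\ell \in R} A_\ell\Bigr| = \sum_{\ell \in R} |A_\ell| = \sum_{\ell \in R} |B_\ell| = \Bigl|\bigcup_{\ell \in R} B_\ell\Bigr|.
\]
Combined with the inclusion, finiteness of $X$ forces equality:
\[
\bigcup_{\ell \in R} A_\ell = \bigcup_{\ell \in R} B_\ell.
\]

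The third and final step uses the hypothesis $(i,j) \in E$. Since $j \in R$, we have $B_j \subseteq \bigcup_{\ell \in R} B_\ell = \bigcup_{\ell \in R} A_\ell$. The edge hypothesis gives $A_i \cap B_j \neq \emptyset$, so $A_i$ meets $\bigcup_{\ell \in R} A_\ell$; because $\{A_1,\ldots,A_k\}$ is a partition, this can only happen if $i \in R$, which is exactly what we wanted.

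There is no real obstacle here beyond correctly identifying the right set to close under reachability and noticing that the direction of the edges (edge $(i,j)$ corresponding to $A_i \cap B_j \neq \emptyset$) matches the direction of the sought path in the opposite sense — a potential point of confusion that the argument above resolves by tracking $R$ as an "out-closed" set from $j$ and comparing $\bigcup A_\ell$ with $\bigcup B_\ell$ rather than the reverse.
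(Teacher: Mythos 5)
Your proof is correct and takes essentially the same approach as the paper: both identify the out-closed set of nodes reachable from $j$ (your $R$, the paper's $N^j$) and exploit the cardinality identity $\sum_{\ell\in R}|A_\ell|=\sum_{\ell\in R}|B_\ell|$. The only cosmetic difference is that you argue directly — inclusion plus equal cardinality forces $\bigcup_{\ell\in R}A_\ell=\bigcup_{\ell\in R}B_\ell$, hence $i\in R$ — while the paper runs the same count as a proof by contradiction via two double sums.
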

\begin{proof}
Suppose by contradiction that $(i,j)\in E$ but there is no directed path from $j$ to $i$. 
Then, the sets of nodes
\[ N_i=\{\text{nodes with a directed path to }i\},\qquad N^j=\{\text{nodes with a directed path from }j\} \]
are disjoint. 
Moreover, there is no edge from $N^j$ to $(N^j)^c$, hence
\begin{align*}
\sum_{x\in N^j} |A_x| &=\sum_{x\in N^j}\sum_{y\in V}|A_x\cap B_y|=\sum_{x\in N^j} \sum_{y\in N^j}|A_x\cap B_y|, \\
\sum_{y\in N^j} |B_y| &=\sum_{y\in N^j}\sum_{x\in V}|A_x\cap B_y|\ge|A_i\cap B_j|+\sum_{x\in N^j}\sum_{y\in N^j}|A_x\cap B_y|,
\end{align*}
where the last inequality follows by letting $x$ vary in $N^j\cup\{i\}$ instead of $V$.
We have thus obtained a contradiction, because $\sum_{x\in N^j}|A_x|=\sum_{y\in N^j}|B_y|$ and $|A_i\cap B_j|>0$.
\end{proof}

\begin{proof}[Proof of Lemma~{\rm\ref{ss2.g}}]
The hypotheses of Lemma~\ref{ss1} are satisfied with $f_1,\ldots,f_k,\Lambda_n$ as in the statement of Lemma~\ref{ss2.g} and with $L_{n,j}=|\tilde\Lambda_{n,j}|$ for every $n$ and every $j=1,\ldots,k$. Thus, by Lemma~\ref{ss1}, for every $n$ there exists a partition $\{\hat\Lambda_{n,1},\ldots,\hat\Lambda_{n,k}\}$ of $\Lambda_n$ such that, for every $j=1,\ldots,k$, the following properties hold.
\begin{itemize}[nolistsep,leftmargin=*]
	\item $|\hat\Lambda_{n,j}|=L_{n,j}=|\tilde\Lambda_{n,j}|$.
	\item $\{\hat\Lambda_{n,j}\}_n\sim f_j$, i.e., $\{\hat D_{n,j}\}_n\sim_\lambda f_j$, where $\hat D_{n,j}={\rm diag}(\hat\lambda_{1,n}^{(j)},\ldots,\hat\lambda_{L_{n,j},n}^{(j)})$ and $\{\hat\lambda_{1,n}^{(j)},\ldots,\hat\lambda_{L_{n,j},n}^{(j)}\}=\hat\Lambda_{n,j}$.
\end{itemize}
The partition $\{\hat\Lambda_{n,1},\ldots,\hat\Lambda_{n,k}\}$ satisfies the first two properties required in the thesis of the lemma, but it may not satisfy the third property. Through ``successive displacements'', we want to change the partition $\{\hat\Lambda_{n,1},\ldots,\hat\Lambda_{n,k}\}$ into a new partition $\{\Lambda_{n,1},\ldots,\Lambda_{n,k}\}$ that satisfies also the third property.

For every $j=1,\ldots,k$, since $\{\hat D_{n,j}\}_n\sim_\lambda f_j$, by Lemmas~\ref{s-attr} and~\ref{Ex3.3} there exists some $\delta_{n,j}$ tending to $0$ as $n\to\infty$ such that
\[ \frac{|\hat\Lambda_{n,j}\cap(\mathcal{ER}(f_j))_{\delta_{n,j}}^c|}{L_{n,j}}\to0\ \,\mbox{as}\ \,n\to\infty. \]
Note that the previous limit relation continues to hold if we replace $L_{n,j}$ with $d_n$ (because $L_{n,j}/d_n=|\tilde\Lambda_{n,j}|/d_n\to1/k$ by hypothesis) and $\delta_{n,j}$ with $\delta_n=\max(\delta_{n,1},\ldots,\delta_{n,k},\epsilon_n)$, where $\epsilon_n$ is the same as in the assumptions of the lemma. 
Thus, if we define
\begin{equation}\label{Eset.g}
\hat E_{n,j}=\hat\Lambda_{n,j}\cap(\mathcal{ER}(f_j))_{\delta_n}^c,\qquad j=1,\ldots,k,
\end{equation}
then we have the following: for every $n$ there exists some $\delta_n$ tending to $0$ as $n\to\infty$ such that $\epsilon_n\le\delta_n$ and, for every $j=1,\ldots,k$, 
\begin{equation}\label{o(d_n).g}
\frac{|\hat E_{n,j}|}{d_n}\to0\ \,\mbox{as}\ \,n\to\infty.
\end{equation}
We remark that, since $\tilde\Lambda_{n,j}\subseteq(\mathcal{ER}(f_j))_{\epsilon_n}$ by assumption, we have
\begin{equation}\label{remark.g}
\hat E_{n,j}\subseteq(\mathcal{ER}(f_j))_{\delta_n}^c\subseteq(\mathcal{ER}(f_j))_{\epsilon_n}^c\subseteq(\tilde\Lambda_{n,j})^c.
\end{equation}

Now, fix $n$ and take an element $x\in\hat E_{n,1}\cup\cdots\cup\hat E_{n,k}$. To fix ideas, suppose that $x\in\hat E_{n,1}$. 
By definition of $\hat E_{n,1}$ we have $x\in\hat\Lambda_{n,1}$, and by \eqref{remark.g} we have $x\not\in\tilde\Lambda_{n,1}$. Since $\{\tilde\Lambda_{n,1},\ldots,\tilde\Lambda_{n,k}\}$ is a partition of $\Lambda_n$ just like $\{\hat\Lambda_{n,1},\ldots,\hat\Lambda_{n,k}\}$, there exists $p\in\{1,\ldots,k\}$ with $p\ne1$ such that $x\in\tilde\Lambda_{n,p}$. Note that all hypotheses of Lemma~\ref{lem:graph} are satisfied for $X=\Lambda_n$ and the partitions $\{A_1,\ldots,A_k\}=\{\hat\Lambda_{n,1},\ldots,\hat\Lambda_{n,k}\}$ and $\{B_1,\ldots,B_k\}=\{\tilde\Lambda_{n,1},\ldots,\tilde\Lambda_{n,k}\}$, and moreover $(1,p)$ is an edge of the graph $\mathcal G$ mentioned in Lemma~\ref{lem:graph} due to the element $x\in\hat\Lambda_{n,1}\cap\tilde\Lambda_{n,p}$. Hence, by Lemma~\ref{lem:graph}, there exists in $\mathcal G$ a directed path from $p$ to $1$. This means that there exist indices
\[ i_0=1,\quad i_1=p,\quad i_2,\quad i_3,\quad \ldots,\quad i_q,\quad i_{q+1}=1 \]
(with $i_0,\ldots,i_{q+1}\in\{1,\ldots,k\}$ and $i_0,\ldots,i_q$ distinct) and corresponding elements
\[ x_0=x,\quad x_1,\quad x_2,\quad x_3,\quad \ldots,\quad x_q \]
(with $x_0,\ldots,x_q\in\Lambda_n$) such that 
\[ x_s\in\hat\Lambda_{n,i_s}\cap\tilde\Lambda_{n,i_{s+1}},\qquad s=0,\ldots,q. \]
As a consequence, we can produce a new partition $\{\overline\Lambda_{n,1},\ldots,\overline\Lambda_{n,k}\}$ of $\Lambda_n$ with the same cardinalities
\[ |\overline\Lambda_{n,j}|=L_{n,j}=|\hat\Lambda_{n,j}|,\qquad j=1,\ldots,k, \]
by removing $x_s$ from $\hat\Lambda_{n,i_s}$ and adding it to $\hat\Lambda_{n,i_{s+1}}$ for $s=1,\ldots,q$. Note that, for every $s=0,\ldots,q$,
\[ x_s\in\tilde\Lambda_{n,i_{s+1}}\subseteq(\mathcal{ER}(f_{i_{s+1}}))_{\epsilon_n}\subseteq(\mathcal{ER}(f_{i_{s+1}}))_{\delta_n}, \]
hence 
\begin{equation}\label{xsNOT.g}
x_s\not\in\hat E_{n,i_{s+1}},\qquad s=0,\ldots,q. 
\end{equation}
Therefore, if in analogy with \eqref{Eset.g} we define
\begin{equation}\label{Eset'.g}
\overline E_{n,j}=\overline\Lambda_{n,j}\cap(\mathcal{ER}(f_j))_{\delta_n}^c,\qquad j=1,\ldots,k,
\end{equation}
then we have
\begin{align*}
\overline E_{n,j}&\subseteq\hat E_{n,j},\qquad j=1,\ldots,k,\\
|\overline E_{n,1}|&=|\hat E_{n,1}|-1,
\end{align*}
where the latter equation is due to the fact that $x=x_0$ has been removed from $\hat E_{n,1}$ and has been replaced with $x_q\not\in\hat E_{n,1}$; see \eqref{xsNOT.g}.
In conclusion, starting from the original partition
\[ \{\hat\Lambda_{n,1},\ldots,\hat\Lambda_{n,k}\},\qquad\{\hat E_{n,1},\ldots,\hat E_{n,k}\} \]
we have produced a new partition
\[ \{\overline\Lambda_{n,1},\ldots,\overline\Lambda_{n,k}\},\qquad\{\overline E_{n,1},\ldots,\overline E_{n,k}\} \]
with the same cardinalities
\[ |\overline\Lambda_{n,1}|=L_{n,1}=|\hat\Lambda_{n,1}|,\quad\ldots,\quad|\overline\Lambda_{n,k}|=L_{n,k}=|\hat\Lambda_{n,k}| \]
and with
\begin{align*}
\overline E_{n,1}\cup\cdots\cup\overline E_{n,k}\subsetneqq\hat E_{n,1}\cup\cdots\cup\hat E_{n,k}.
\end{align*}
We can now repeat the same procedure for another element $x\in\overline E_{n,1}\cup\cdots\cup\overline E_{n,k}$ until all the ``$E$-sets'' are empty. At the end of the whole construction, we obtain a final partition
\[ \{\Lambda_{n,1},\ldots,\Lambda_{n,k}\},\qquad\{E_{n,1},\ldots,E_{n,k}\} \]
with the same cardinalities
\begin{equation}\label{Lambda.c.g}
|\Lambda_{n,1}|=L_{n,1}=|\hat\Lambda_{n,1}|,\quad\ldots,\quad|\Lambda_{n,k}|=L_{n,k}=|\hat\Lambda_{n,k}| 
\end{equation}
and with corresponding ``$E$-sets''
\begin{equation}\label{emptyE.g}
E_{n,1}=\ldots=E_{n,k}=\emptyset,
\end{equation}
where $E_{n,j}$ is defined in analogy with \eqref{Eset.g} and \eqref{Eset'.g} as follows:
\begin{equation}\label{Eset.f.g}
E_{n,j}=\Lambda_{n,j}\cap(\mathcal{ER}(f_j))_{\delta_n}^c,\qquad j=1,\ldots,k.
\end{equation}
We prove that the partition $\{\Lambda_{n,1},\ldots,\Lambda_{n,k}\}$ satisfies the three properties required in the thesis of the lemma.

The partition $\{\Lambda_{n,1},\ldots,\Lambda_{n,k}\}$ satisfies the first property by \eqref{Lambda.c.g} and the third property by \eqref{emptyE.g}--\eqref{Eset.f.g}. It only remains to prove that $\{\Lambda_{n,1},\ldots,\Lambda_{n,k}\}$ satisfies the second property. To this end, we note that the above procedure must be repeated at most a number $N_n$ of times equal to
\begin{equation}\label{N_n.g}
N_n=|\hat E_{n,1}\cup\cdots\cup\hat E_{n,k}|\le|\hat E_{n,1}|+\ldots+|\hat E_{n,k}|=o(d_n),
\end{equation}
because each time we apply the procedure, the union of the ``$E$-sets'' loses an element (the final equality in \eqref{N_n.g} is due to \eqref{o(d_n).g}). Moreover, each time we apply the procedure, the new partition $\{\Lambda_{n,1}^{({\rm new})},\ldots,\Lambda_{n,k}^{({\rm new})}\}$ differs from the previous partition $\{\Lambda_{n,1}^{({\rm old})},\ldots,\Lambda_{n,k}^{({\rm old})}\}$ by at most $1$ element per set, in the sense that
\[ |\Lambda_{n,j}^{({\rm new})}\setminus\Lambda_{n,j}^{({\rm old})}|\le1,\qquad j=1,\ldots,k. \]
For example, the first time we apply the procedure, we obtain
\[ |\overline\Lambda_{n,j}\setminus\hat\Lambda_{n,j}|\le1,\qquad j=1,\ldots,k. \]
So, after $N_n$ applications of the procedure, we obtain
\[ |\Lambda_{n,j}\setminus\hat\Lambda_{n,j}|\le N_n,\qquad j=1,\ldots,k. \]
Thus, for every $j=1,\ldots,k$, if we define $D_{n,j}$ as in the second property of the thesis of the lemma, the previous inequality implies that, after a suitable permutation of its diagonal elements, $D_{n,j}$ becomes equal to $\hat D_{n,j}+\Delta_{n,j}$ with $\Delta_{n,j}$ a diagonal matrix with ${\rm rank}(\Delta_{n,j})=|\Lambda_{n,j}\setminus\hat\Lambda_{n,j}|\le N_n=o(d_n)$. This implies that $\{\Delta_{n,j}\}_n\sim_\sigma0$ by Lemma~\ref{0s} and $\{D_{n,j}\}_n\sim_\lambda f_j$ by Lemma~\ref{Ex5.3}.
\end{proof}

\subsection{Proof of Theorem~\ref{eccoGi'}}

We have now collected all the ingredients to prove Theorem~\ref{eccoGi'}.

\begin{proof}[Proof of Theorem~{\rm\ref{eccoGi'}}]
The theorem follows immediately from Lemma~\ref{ss2.g} and Corollary~\ref{eccoGi-n}.
Indeed, by Lemma~\ref{ss2.g}, for every $n$ there exists a partition $\{\Lambda_{n,1},\ldots,\Lambda_{n,k}\}$ of $\Lambda_n$ 
such that, for every $j=1,\ldots,k$, the following properties hold.
\begin{itemize}[nolistsep,leftmargin=*]
	\item $|\Lambda_{n,j}|=|\tilde\Lambda_{n,j}|$.
	\item $\{D_{n,j}\}_n\sim_\lambda f_j$, where $D_{n,j}={\rm diag}(\lambda_{1,n}^{(j)},\ldots,\lambda_{|\Lambda_{n,j}|,n}^{(j)})$ and $\{\lambda_{1,n}^{(j)},\ldots,\lambda_{|\Lambda_{n,j}|,n}^{(j)}\}=\Lambda_{n,j}$.
	\item $\Lambda_{n,j}\subseteq[\inf_{[a,b]}f_j-\delta_n,\sup_{[a,b]}f_j+\delta_n]$ for some $\delta_n\to0$ as $n\to\infty$.
\end{itemize}
By Corollary~\ref{eccoGi-n}, for every $j=1,\ldots,k$ and every a.u.\ grid $\{x_{i,n}^{(j)}\}_{i=1,\ldots,|\Lambda_{n,j}|}$ in $[a,b]$ with $\{x_{i,n}^{(j)}\}_{i=1,\ldots,|\Lambda_{n,j}|}\subset[a,b]$, if $\sigma_{n,j}$ and $\tau_{n,j}$ are two permutations of $\{1,\ldots,|\Lambda_{n,j}|\}$ such that the vectors $[f_j(x_{\sigma_{n,j}(1),n}^{(j)}),\ldots,f_j(x_{\sigma_{n,j}(|\Lambda_{n,j}|),n}^{(j)})]$ and $[\lambda_{\tau_{n,j}(1),n}^{(j)},\ldots,\lambda_{\tau_{n,j}(|\Lambda_{n,j}|),n}^{(j)}]$ are sorted in increasing order, we have
\[ \max_{i=1,\ldots,|\Lambda_{n,j}|}|f_j(x_{\sigma_{n,j}(i),n}^{(j)})-\lambda_{\tau_{n,j}(i),n}^{(j)}|\to0\ \,\mbox{as}\ \,n\to\infty. \tag*{\qedhere} \]
\end{proof}

\section{Numerical experiments}\label{numexp}

In this section, after recalling some properties of Toeplitz matrices, we illustrate our main results through numerical examples.

\subsection{Preliminaries on Toeplitz matrices}

It is not difficult to see that the conjugate transpose of $T_n(f)$ is given by
\begin{equation*}
T_n(f)^*=T_n(\overline f)
\end{equation*}
for every $f\in L^1([-\pi,\pi])$ and every $n$; see, e.g., \cite[Section~6.2]{GLTbookI}.
In particular, if $f$ is real a.e., then $\overline f=f$ a.e.\ and the matrices $T_n(f)$ are Hermitian.
The next theorem collects some properties of Toeplitz matrices generated by a real function. For the proof, see \cite[Theorems~6.1 and~6.5]{GLTbookI}.

\begin{theorem}\label{block-lemma}
Let $f\in L^1([-\pi,\pi])$ be real and let
$m_f=\mathop{\rm ess\,inf}_{[-\pi,\pi]}f$ and $M_f=\mathop{\rm ess\,sup}_{[-\pi,\pi]}f$.
Then, the following properties hold.
\begin{enumerate}[nolistsep,leftmargin=*]
	\item $T_n(f)$ is Hermitian and the eigenvalues of $T_n(f)$ lie in the interval $[m_f,M_f]$ for all $n$.
	\item If $f$ is not a.e.\ constant, then the eigenvalues of $T_n(f)$ lie in $(m_f,M_f)$ for all $n$.
	\item $\{T_n(f)\}_n\sim_\lambda f$.
\end{enumerate}
\end{theorem}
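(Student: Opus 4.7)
The plan is to handle the three claims in sequence, using Rayleigh-quotient arguments for Parts~1--2 and a moment-approximation strategy for Part~3.

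For Part~1, Hermiticity is immediate from the identity $T_n(f)^*=T_n(\overline f)$ recalled just before the theorem, since $\overline f=f$ a.e.\ when $f$ is real. The spectral inclusion $\sigma(T_n(f))\subseteq[m_f,M_f]$ follows from the representation
\[
v^*T_n(f)v=\frac{1}{2\pi}\int_{-\pi}^\pi f(\theta)\,|p_v(\theta)|^2\,\mathrm{d}\theta,\qquad p_v(\theta)=\sum_{j=1}^n v_j\,\mathrm{e}^{\mathrm{i}j\theta},
\]
which one obtains by expanding $v^*T_n(f)v=\sum_{i,j}\overline{v_i}\,f_{i-j}\,v_j$ and inserting the Fourier coefficient formula for $f_{i-j}$. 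Since $\frac{1}{2\pi}\int_{-\pi}^\pi|p_v|^2\,\mathrm{d}\theta=\|v\|_2^2$ by orthonormality of $\{\mathrm{e}^{\mathrm{i}k\theta}\}_{k\in\mathbb{Z}}$, sandwiching $m_f\le f\le M_f$ a.e.\ and invoking the Courant--Fischer characterization of eigenvalues of a Hermitian matrix yields the claimed bound.

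For Part~2, I would argue by contradiction. If $M_f$ were an eigenvalue of $T_n(f)$ with unit eigenvector $v$, then the identity
\[
0=M_f-v^*T_n(f)v=\frac{1}{2\pi}\int_{-\pi}^\pi(M_f-f(\theta))\,|p_v(\theta)|^2\,\mathrm{d}\theta,
\]
combined with $M_f-f\ge0$ a.e., would force $(M_f-f)\,|p_v|^2=0$ a.e.\ on $[-\pi,\pi]$. Since $v\ne0$, the trigonometric polynomial $p_v$ is not identically zero and has a finite zero set; hence $f=M_f$ a.e., contradicting the non-constancy hypothesis. The argument at $m_f$ is symmetric.

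For Part~3, the plan is to prove the moment convergence
\[
\frac{1}{n}\operatorname{tr}(T_n(f)^k)\longrightarrow\frac{1}{2\pi}\int_{-\pi}^\pi f(\theta)^k\,\mathrm{d}\theta,\qquad k=1,2,\ldots,
\]
and then deduce \eqref{sd} with $F\in C_c(\mathbb{R})$ by Weierstrass approximation. The moment identity reduces to the trivial fact $\frac{1}{n}\operatorname{tr}(T_n(g))=g_0=\frac{1}{2\pi}\int_{-\pi}^\pi g\,\mathrm{d}\theta$ once one shows that $T_n(f)^k-T_n(f^k)$ has trace $o(n)$; inductively, the discrepancy is a corner perturbation supported on the first and last $O(k)$ rows/columns, whose normalized trace vanishes in the limit. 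This works cleanly for $f\in L^\infty$, where Part~1 confines all eigenvalues to the compact set $[m_f,M_f]$, so any $F\in C_c(\mathbb{R})$ is uniformly approximable by polynomials on the fixed interval $[m_f,M_f]$ that carries all spectra. For general $f\in L^1$, truncate $f=f_N+(f-f_N)$ with $f_N=f\cdot\chi_{\{|f|\le N\}}$ and pass the distribution result from $f_N$ to $f$ via a trace-norm perturbation estimate such as $\|T_n(f-f_N)\|_1\le\frac{n}{2\pi}\|f-f_N\|_{L^1}$.

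The main obstacle is this last truncation step: when $f\notin L^\infty$, the spectrum of $T_n(f)$ can escape to infinity, so handling arbitrary $F\in C_c(\mathbb{R})$ requires showing that the fraction of ``outlier'' eigenvalues contributes negligibly. The standard remedy combines the trace-norm estimate above with a Lidskii/Ky Fan--type inequality that bounds
\[
\left|\frac{1}{n}\sum_{i=1}^n\bigl(F(\lambda_i(T_n(f)))-F(\lambda_i(T_n(f_N)))\bigr)\right|\le\frac{\operatorname{Lip}(F)}{2\pi}\,\|f-f_N\|_{L^1}+o(1),
\]
and is closed by a diagonal $\varepsilon$--$N$--$n$ argument: given $\varepsilon>0$ and $F\in C_c(\mathbb{R})$, first choose $N$ so that $\|f-f_N\|_{L^1}$ is sufficiently small, and then let $n\to\infty$ using the already established bounded-symbol case for $f_N$.
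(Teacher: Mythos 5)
The paper does not actually prove Theorem~\ref{block-lemma}; it simply cites \cite[Theorems~6.1 and~6.5]{GLTbookI}, where the eigenvalue distribution result (Part~3) is established inside the a.c.s.\ (approximating classes of sequences) framework. So your blind attempt is an independent proof, and I will assess it on its own merits.

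Parts~1 and~2 are correct and are the standard clean argument. The quadratic-form identity $v^*T_n(f)v=\frac{1}{2\pi}\int_{-\pi}^\pi f\,|p_v|^2$ together with Parseval gives the Rayleigh-quotient sandwich, and the strict inequality in Part~2 follows exactly as you say: a nonzero trigonometric polynomial $p_v$ of degree at most $n$ has a finite zero set, so $(M_f-f)|p_v|^2=0$ a.e.\ forces $f=M_f$ a.e., contradicting non-constancy (and the case $M_f=+\infty$ or $m_f=-\infty$ is vacuous).

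Part~3 has a genuine gap in the bounded-symbol case. Your claim that $T_n(f)^k-T_n(f^k)$ is ``a corner perturbation supported on the first and last $O(k)$ rows/columns'' is true only when $f$ is a trigonometric polynomial, because then $T_n(f)$ is banded and the telescoping identity $T_n(f)^k-T_n(f^k)=\sum_{j=1}^{k}T_n(f)^{j-1}\bigl(T_n(f)T_n(f^{k-j})-T_n(f^{k-j+1})\bigr)$ produces corner corrections of bounded rank and bounded norm. For a general $f\in L^\infty$, $T_n(f)$ is a full matrix, the difference $T_n(f)T_n(g)-T_n(fg)$ is not supported in corners, and the naive moment series $\sum_{m_1+\cdots+m_k=0}|f_{m_1}\cdots f_{m_k}|$ need not even converge (the Fourier coefficients are only $\ell^2$). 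To close the gap, you must insert the same ingredient you already use in the $L^1$ step, namely the trace-norm bound $\|T_n(h)\|_1\le\frac{n}{2\pi}\|h\|_{L^1}$: first prove the moment identity for trigonometric polynomials $p$ via the genuine corner argument; then approximate $f\in L^\infty$ by Fej\'er means $p_m$ (which satisfy $\|p_m\|_\infty\le\|f\|_\infty$ and $\|f-p_m\|_{L^1}\to0$); and use
\[
\bigl|\mathrm{tr}(T_n(f)^k)-\mathrm{tr}(T_n(p_m)^k)\bigr|\le k\,\|f\|_\infty^{k-1}\,\|T_n(f-p_m)\|_1\le\frac{k\,\|f\|_\infty^{k-1}}{2\pi}\,n\,\|f-p_m\|_{L^1}
\]
together with the analogous estimate for $\frac{1}{2\pi}\int f^k-\frac{1}{2\pi}\int p_m^k$, closing an $\varepsilon$--$m$--$n$ diagonal argument. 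With that repair, and noting that $F\in C_c(\mathbb R)$ should first be uniformly approximated by a Lipschitz function before invoking the Mirsky/Lidskii bound $\bigl|\sum_i F(\lambda_i(A))-\sum_i F(\lambda_i(B))\bigr|\le\mathrm{Lip}(F)\,\|A-B\|_1$ in the $L^1$-truncation step, your overall strategy is sound and is essentially the classical Szeg\H{o}/Tyrtyshnikov--Zamarashkin moment route, as opposed to the GLT book's route through a.c.s.\ machinery, which is heavier but avoids moments entirely.
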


\subsection{Numerical examples} 

\begin{example}\label{e1}
Let $f(\theta)=a+b\cos\theta:[-\pi,\pi]\to\mathbb R$, with $a,b\in\mathbb R$ and $b\ne0$, and let $\Lambda_n=\{\lambda_{1,n},\ldots,\lambda_{n,n}\}$ be the multiset consisting of the eigenvalues of the Hermitian Toeplitz matrix $T_n(f)$. By Theorem~\ref{block-lemma} and the fact that $f$ is an even function, we have $\{\Lambda_n\}_n\sim f|_{[0,\pi]}$ and $\Lambda_n\subseteq(\min_{[0,\pi]}f,\max_{[0,\pi]}f)$. Since $f$ is continuous, $f|_{[0,\pi]}$ and $\Lambda_n$ satisfy all the assumptions of Corollary~\ref{eccoGi-n} and Theorem~\ref{thm:mmd}, and we therefore conclude the following.
\begin{itemize}[nolistsep,leftmargin=*]
	\item For every a.u.\ grid $\{\theta_{i,n}\}_{i=1,\ldots,n}$ in $[0,\pi]$ with $\{\theta_{i,n}\}_{i=1,\ldots,n}\subset[0,\pi]$, we have
	\[ \max_{i=1,\ldots,n}|f(\theta_{i,n})-\lambda_{\tau_n(i),n}|\to0\ \,\mbox{as}\, \ n\to\infty, \]
	where $\tau_n$ is a suitable permutation of $\{1,\ldots,n\}$.
	\item There exists an a.u.\ grid $\{\theta_{i,n}\}_{i=1,\ldots,n}$ in $[0,\pi]$ with $\{\theta_{i,n}\}_{i=1,\ldots,n}\subset[0,\pi]$ such that, for every $n$,
	\[ \lambda_{\tau_n(i),n}=f(\theta_{i,n}),\qquad i=1,\ldots,n, \]
	where $\tau_n$ is a suitable permutation of $\{1,\ldots,n\}$.
\end{itemize}
The two previous assertions are actually well known in this case, because $\Lambda_n=\{f(\frac{i\pi}{n+1}):i=1,\ldots,n\}$; see \cite[Theorem~2.4]{BG}.
\end{example}

\begin{example}\label{e2}

\begin{figure}
\centering
\begin{minipage}{0.485\textwidth}
\centering
\includegraphics[width=\textwidth]{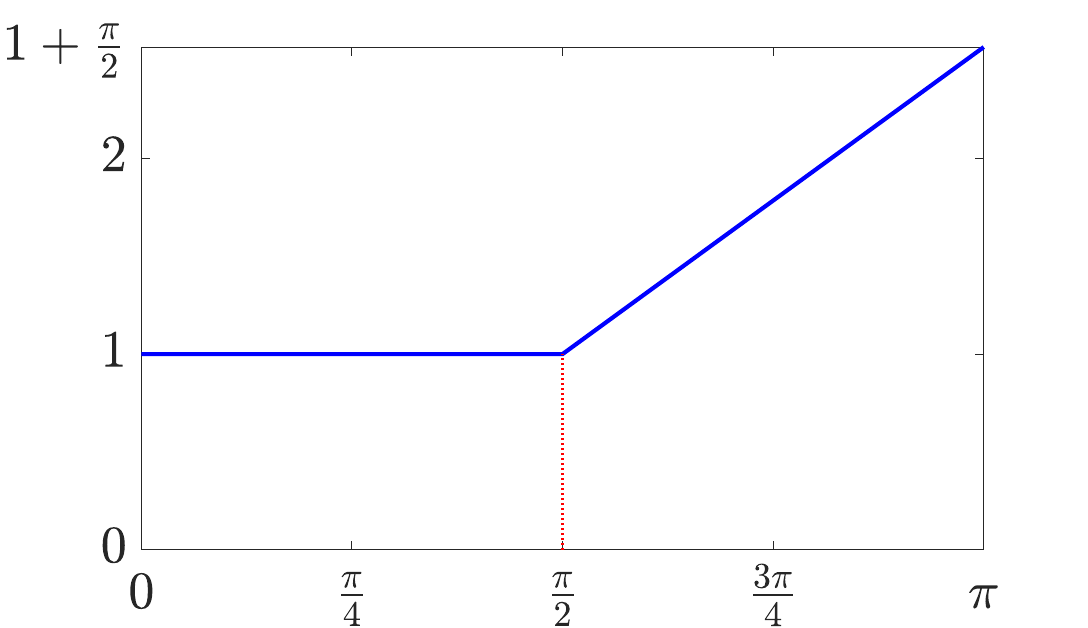}
\caption{Example~\ref{e2}: Graph on the interval $[0,\pi]$ of the function $f(\theta)$ defined in \eqref{fc}.}
\label{fc_graph}
\end{minipage}
\hspace{0.01\textwidth}
\begin{minipage}{0.485\textwidth}
\centering
\captionof{table}{Example~\ref{e2}: Computation of $M_n$ for increasing values of $n$.}
\begin{tabular}{rc}
\toprule
$n$ & $M_n$\\
\midrule
8   & 0.0851\\
16  & 0.0632\\
32  & 0.0454\\
64  & 0.0312\\
128 & 0.0206\\
256 & 0.0132\\
512 & 0.0082\\
1024 & 0.0050\\
\bottomrule
\end{tabular}
\label{e2t}
\end{minipage}
\end{figure}

Let $f:[-\pi,\pi]\to\mathbb R$,
\begin{equation}\label{fc}
f(\theta)=\left\{\begin{aligned}
&1, &\quad0&\le\theta<\pi/2,\\
&\theta+1-\pi/2, &\quad\pi/2&\le\theta\le\pi,\\
&f(-\theta), &\quad-\pi&\le\theta<0,
\end{aligned}\right.
\end{equation}
and let $\Lambda_n=\{\lambda_{1,n},\ldots,\lambda_{n,n}\}$ be the multiset consisting of the eigenvalues of the Hermitian Toeplitz matrix $T_n(f)$. Figure~\ref{fc_graph} shows the graph of $f$ over the interval $[0,\pi]$. By Theorem~\ref{block-lemma} and the fact that $f$ is an even function, we have $\{\Lambda_n\}_n\sim f|_{[0,\pi]}$ and $\Lambda_n\subseteq(\min_{[0,\pi]}f,\max_{[0,\pi]}f)=(1,1+\pi/2)$. Since $f$ is continuous, $f|_{[0,\pi]}$ and $\Lambda_n$ satisfy all the assumptions of Corollary~\ref{eccoGi-n}, and we therefore conclude that, for every a.u.\ grid $\{\theta_{i,n}\}_{i=1,\ldots,n}$ in $[0,\pi]$ with $\{\theta_{i,n}\}_{i=1,\ldots,n}\subset[0,\pi]$, we have
\begin{equation}\label{max-->0}
M_n=\max_{i=1,\ldots,n}|f(\theta_{i,n})-\lambda_{\tau_n(i),n}|\to0\ \,\mbox{as}\, \ n\to\infty,
\end{equation}
where $\tau_n$ is the permutation of $\{1,\ldots,n\}$ that sorts $\lambda_{1,n},\ldots,\lambda_{n,n}$ in increasing order (note that $f|_{[0,\pi]}$ is increasing).
To provide numerical evidence of \eqref{max-->0}, in Table~\ref{e2t} we compute $M_n$ for increasing values of $n$ in the case of the a.u.\ grid $\theta_{i,n}=\frac{i\pi}{n+1}$, $i=1,\ldots,n$. We see from the table that $M_n\to0$ as $n\to\infty$, though the convergence is slow.

Now we observe that $f|_{[0,\pi]}$ and $\Lambda_n$ do not satisfy the assumptions of Theorem~\ref{thm:mmd}. Actually, they satisfy all the assumptions of Theorem~\ref{thm:mmd} except the hypothesis that $f$ has a finite number of local maximum/minimum points. Indeed, $f$ is constant on $[0,\pi/2]$ and so all points in $[0,\pi/2)$ are both local maximum and local minimum points for $f$ according to our Definition~\ref{wlep}. We observe that, in fact, the thesis of Theorem~\ref{thm:mmd} does not hold in this case, i.e., there is no a.u.\ grid $\{\theta_{i,n}\}_{i=1,\ldots,n}$ in $[0,\pi]$ with $\{\theta_{i,n}\}_{i=1,\ldots,n}\subset[0,\pi]$ such that, for every $n$,
\[ \lambda_{\tau_n(i),n}=f(\theta_{i,n}),\qquad i=1,\ldots,n, \]
for a suitable permutation $\tau_n$ of $\{1,\ldots,n\}$. This is clear, because $\Lambda_n\subset(1,1+\pi/2)$ and so any grid $\{\theta_{i,n}\}_{i=1,\ldots,n}\subset[0,\pi]$ satisfying the previous condition must be contained in $(\pi/2,\pi)$, which implies that it cannot be a.u.\ in $[0,\pi]$.
\end{example}

\begin{example}\label{e3}

\begin{figure}
\centering
\begin{minipage}{0.485\textwidth}
\centering
\includegraphics[width=\textwidth]{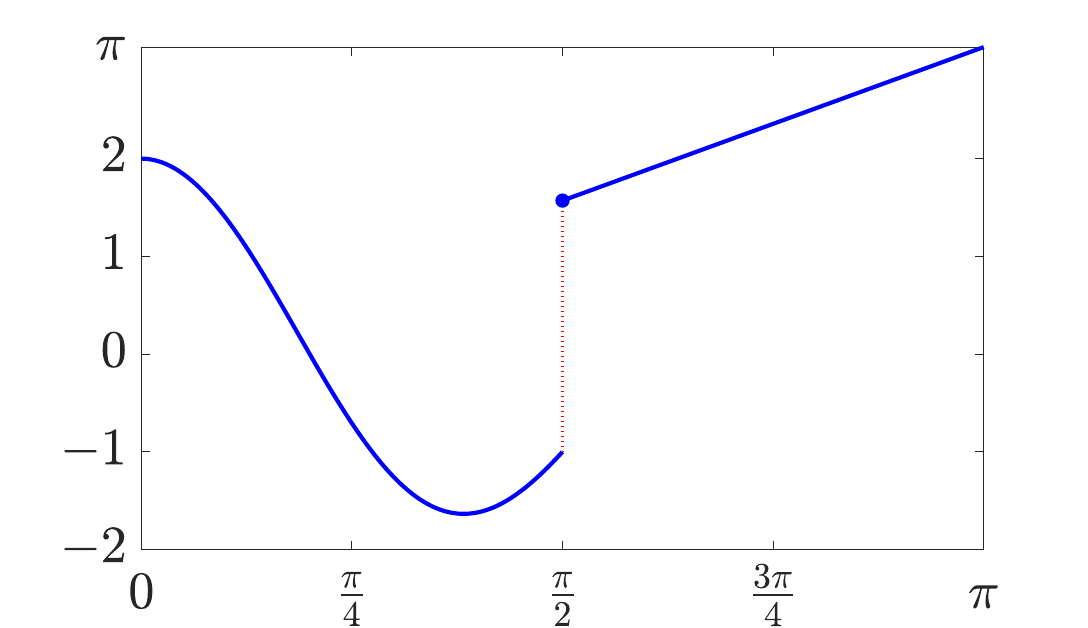}
\caption{Example~\ref{e3}: Graph on the interval $[0,\pi]$ of the function $f(\theta)$ defined in \eqref{fd}.}
\label{fd_graph}
\end{minipage}
\hspace{0.01\textwidth}
\begin{minipage}{0.485\textwidth}
\centering
\captionof{table}{Example~\ref{e3}: Computation of $M_n$ for increasing values of $n$.}
\begin{tabular}{rc}
\toprule
$n$ & $M_n$\\
\midrule
8   & 0.7220\\
16  & 0.5625\\
32  & 0.4471\\
64  & 0.2956\\
128 & 0.1783\\
256 & 0.1096\\
512 & 0.0605\\
1024 & 0.0373\\
\bottomrule
\end{tabular}
\label{e3t}
\end{minipage}
\end{figure}

Let $f:[-\pi,\pi]\to\mathbb R$,
\begin{equation}\label{fd}
f(\theta)=\left\{\begin{aligned}
&\cos(2\theta)+\cos(3\theta), &\quad0&\le\theta<\pi/2,\\
&\theta, &\quad\pi/2&\le\theta\le\pi,\\
&f(-\theta), &\quad-\pi&\le\theta<0,
\end{aligned}\right.
\end{equation}
and let $\Lambda_n=\{\lambda_{1,n},\ldots,\lambda_{n,n}\}$ be the multiset consisting of the eigenvalues of the Hermitian Toeplitz matrix $T_n(f)$. Figure~\ref{fd_graph} shows the graph of $f$ over the interval $[0,\pi]$. By Theorem~\ref{block-lemma} and the fact that $f$ is an even function, we have $\{\Lambda_n\}_n\sim f|_{[0,\pi]}$ and $\Lambda_n\subseteq(\min_{[0,\pi]}f,\max_{[0,\pi]}f)=(-\frac{25}{54}-\frac{10\sqrt{10}}{27},\pi)$. Note that the function $f|_{[0,\pi]}$ is not continuous, but it satisfies anyway all the assumptions of Corollary~\ref{eccoGi-n} and Theorem~\ref{thm:mmd}, and we therefore conclude the following.
\begin{itemize}[nolistsep,leftmargin=*]
	\item For every a.u.\ grid $\{\theta_{i,n}\}_{i=1,\ldots,n}$ in $[0,\pi]$ with $\{\theta_{i,n}\}_{i=1,\ldots,n}\subset[0,\pi]$, we have
	\begin{equation}\label{max-->0'}
	M_n=\max_{i=1,\ldots,n}|f(\theta_{\sigma_n(i),n})-\lambda_{\tau_n(i),n}|\to0\ \,\mbox{as}\, \ n\to\infty,
	\end{equation}
	where $\sigma_n$ and $\tau_n$ are two permutations of $\{1,\ldots,n\}$ such that the vectors $[f(\theta_{\sigma_n(1),n}),\ldots,f(\theta_{\sigma_n(n),n})]$ and $[\lambda_{\tau_n(1),n},\ldots,\lambda_{\tau_n(n),n}]$ are sorted in increasing order.
	\item There exists an a.u.\ grid $\{\theta_{i,n}\}_{i=1,\ldots,n}$ in $[0,\pi]$ with $\{\theta_{i,n}\}_{i=1,\ldots,n}\subset[0,\pi]$ such that, for every $n$,
	\[ \lambda_{\tau_n(i),n}=f(\theta_{i,n}),\qquad i=1,\ldots,n, \]
	where $\tau_n$ is a suitable permutation of $\{1,\ldots,n\}$.
\end{itemize}
To provide numerical evidence of \eqref{max-->0'}, in Table~\ref{e3t} we compute $M_n$ for increasing values of $n$ in the case of the a.u.\ grid $\theta_{i,n}=\frac{i\pi}{n+1}$, $i=1,\ldots,n$.
We see from the table that $M_n\to0$ as $n\to\infty$, though the convergence is slow.
\end{example}

\begin{example}\label{e4}
Consider the following second-order differential problem:
\begin{equation*}
\left\{\begin{aligned}
&-(a(x)u'(x))'=g(x), &&x\in(0,1),\\[3pt]
&u(0)=\alpha,\quad u(1)=\beta,
\end{aligned}\right.
\end{equation*}
where $a:[0,1]\to\mathbb R$ is assumed to be continuous and non-negative on $[0,1]$.
In the classical finite difference method based on second-order central finite differences over the uniform grid $x_i=\frac{i}{n+1}$, $i=0,\ldots,n+1$, the computation of the numerical solution reduces to solving a linear system whose coefficient matrix is the symmetric $n\times n$ tridiagonal matrix given by
\[ A_n = \left[\begin{array}{ccccc}
a_{\frac1{2^{\vphantom{\mbox{\tiny 1}}}}}+a_{\frac3{2^{\vphantom{\mbox{\tiny 1}}}}} & \ \ -a_{\frac3{2^{\vphantom{\mbox{\tiny 1}}}}} & \ \ \ \ & \ \ \ \ & \ \ \ \ \\[5pt]
-a_{\frac3{2^{\vphantom{\mbox{\tiny 1}}}}} & \ \ a_{\frac3{2^{\vphantom{\mbox{\tiny 1}}}}}+a_{\frac5{2^{\vphantom{\mbox{\tiny 1}}}}} & \ \ \ \ -a_{\frac5{2^{\vphantom{\mbox{\tiny 1}}}}} & \ \ \ \ & \ \ \ \ \\[5pt]
& \ \ -a_{\frac5{2^{\vphantom{\mbox{\tiny 1}}}}} & \ \ \ \ \ddots & \ \ \ \ \ddots & \ \ \ \ \\[5pt]
& \ \ & \ \ \ \ \ddots & \ \ \ \ \ddots & \ \ \ \ -a_{n-\frac1{2^{\vphantom{\mbox{\tiny 1}}}}} \\[5pt]
& \ \ & \ \ \ \ & \ \ \ \ -a_{n-\frac1{2^{\vphantom{\mbox{\tiny 1}}}}} & \ \ \ \ a_{n-\frac1{2^{\vphantom{\mbox{\tiny 1}}}}}+a_{n+\frac1{2^{\vphantom{\mbox{\tiny 1}}}}}
\end{array}\right],
\]
where $a_i=a(x_i)$ for all $i$ in the real interval $[0,n+1]$; see \cite[Section~10.5.1]{GLTbookI} for more details. Let $f(x,\theta)=a(x)(2-2\cos\theta):[0,1]\times[0,\pi]\to\mathbb R$, and let $\Lambda_n=\{\lambda_{1,n},\ldots,\lambda_{n,n}\}$ be the multiset consisting of the eigenvalues of $A_n$. 
We know from \cite[Theorem~10.5]{GLTbookI} that $\{A_n\}_n\sim_\lambda f$, i.e., $\{\Lambda_n\}_n\sim f$. Moreover, in view of the dyadic decomposition of $A_n$ in \cite[Section~2]{NSP-LAA}, we have
\begin{align*} \Lambda_n&\subseteq\left[\lambda_{\min}(T_n(2-2\cos\theta))\cdot\min_{[0,1]}a,\ \lambda_{\max}(T_n(2-2\cos\theta))\cdot\max_{[0,1]}a\right]\subseteq\left[0,4\max_{[0,1]}a\right]\\
&=\left[\min_{[0,1]\times[0,\pi]}f,\max_{[0,1]\times[0,\pi]}f\right]=f([0,1]\times[0,\pi])=\mathcal{ER}(f),
\end{align*}
where the latter equality follows from the continuity of $f$ and the fact that the domain $[0,1]\times[0,\pi]$ is not ``too wild'' (in particular, it is contained in the closure of its interior); see \cite[Exercise~2.1]{GLTbookI}. 

Now, following the notations of Theorem~\ref{th:main4-n}, let $\aa=(0,0)$ and $\bb=(1,\pi)$, so that $[\aa,\bb]=[0,1]\times[0,\pi]$.
Assume that $n$ is a perfect square, let $\nn=\nn(n)=(\sqrt n,\sqrt n)$, let $[\lambda_{\bu,\nn}^{(n)},\ldots,\lambda_{\nn,\nn}^{(n)}]=[\lambda_{\ii,\nn}^{(n)}]_{\ii=\bu,\ldots,\nn}$ be the same as the vector $[\lambda_{1,n},\ldots,\lambda_{n,n}]=[\lambda_{i,n}]_{i=1,\ldots,n}$ but indexed with the 2-index $\ii=\bu,\ldots,\nn$ instead of the 1-index $i=1,\ldots,n$, and consider the a.u.\ grid in $[\aa,\bb]$ given by
\[ \mathcal G_\nn^{(n)}=\{\xx_{\ii,\nn}^{(n)}\}_{\ii=\bu,\ldots,\nn},\qquad \xx_{\ii,\nn}^{(n)}=\aa+\frac{\ii(\bb-\aa)}{\nn}=\biggl(\frac{i_1}{\sqrt n},\frac{i_2\pi}{\sqrt n}\biggr),\qquad \ii=\bu,\ldots,\nn. \]
Then, by Theorem~\ref{th:main4-n}, we conclude that
\begin{equation}\label{max-->0''}
M_n=\max_{\ii=\bu,\ldots,\nn}|f(\xx_{\sigma_n(\ii),\nn}^{(n)})-\lambda_{\tau_n(\ii),\nn}^{(n)}|\to0\ \,\mbox{as}\ \,n\to\infty,
\end{equation}
where $\sigma_n$ and $\tau_n$ are two permutations of the 2-index range $\{\bu,\ldots,\nn\}$ such that the vectors 
\begin{align*}
[f(\xx_{\sigma_n(\bu),\nn}^{(n)}),\ldots,f(\xx_{\sigma_n(\nn),\nn}^{(n)})]&=[f(\xx_{\sigma_n(\ii),\nn}^{(n)})]_{\ii=\bu,\ldots,\nn},\\ [\lambda_{\tau_n(\bu),\nn}^{(n)},\ldots,\lambda_{\tau_n(\nn),\nn}^{(n)}]&=[\lambda_{\tau_n(\ii),\nn}^{(n)}]_{\ii=\bu,\ldots,\nn}
\end{align*}
are sorted in increasing order.
To provide numerical evidence of \eqref{max-->0''}, in Table~\ref{e4t} we compute $M_n$ for increasing values of $n$ and different choices of $a(x)$.
In all cases, we see from the table that $M_n\to0$ as $n\to\infty$, though the convergence is slow.

\begin{table}
\centering
\caption{Example~\ref{e4}: Computation of $M_n$ for increasing values of $n$ and different choices of $a(x)$.}
\begin{subtable}{0.32\textwidth}
\centering
\caption{$a(x)=\e^{-x}$}
\begin{tabular}{rc}
\toprule
$n$ & $M_n$\\
\midrule
900  & 0.0684\\
1600 & 0.0559\\
2500 & 0.0473\\
3600 & 0.0411\\
4900 & 0.0364\\
6400 & 0.0326\\
8100 & 0.0296\\
10000 & 0.0271\\
\bottomrule
\end{tabular}
\end{subtable}
\begin{subtable}{0.32\textwidth}
\centering
\caption{$a(x)=2+\cos(3x)$}
\begin{tabular}{rc}
\toprule
$n$ & $M_n$\\
\midrule
900  & 0.1471\\
1600 & 0.1132\\
2500 & 0.0890\\
3600 & 0.0738\\
4900 & 0.0634\\
6400 & 0.0558\\
8100 & 0.0484\\
10000 & 0.0436\\
\bottomrule
\end{tabular}
\end{subtable}
\begin{subtable}{0.32\textwidth}
\centering
\caption{$a(x)=x\log(1+x)$}
\begin{tabular}{rc}
\toprule
$n$ & $M_n$\\
\midrule
900  & 0.1240\\
1600 & 0.0915\\
2500 & 0.0717\\
3600 & 0.0583\\
4900 & 0.0497\\
6400 & 0.0435\\
8100 & 0.0383\\
10000 & 0.0344\\
\bottomrule
\end{tabular}
\end{subtable}
\label{e4t}
\end{table}
\end{example}

\begin{example}\label{e4p}
Consider the two-dimensional Poisson problem
\begin{equation*}
\left\{\begin{aligned}
&-\Delta u(\xx)=g(\xx), &&\xx\in(0,1)^2,\\[3pt]
&u(\xx)=h(\xx), &&\xx\in\partial((0,1)^2).
\end{aligned}\right.
\end{equation*}
In the isogeometric Galerkin discretization based on tensor-product biquadratic B-splines defined over the uniform grid $\ii/\nn$ for $\ii=\mathbf0,\ldots,\nn$ and $\nn=\nn(n)=(n,n)$, the computation of the numerical solution reduces to solving a linear system whose coefficient matrix is the symmetric $n^2\times n^2$ matrix given by 
\[ A_n=K_n\otimes M_n+M_n\otimes K_n, \]
where $\otimes$ is the Kronecker tensor product and $K_n$, $M_n$ are the symmetric $n\times n$ matrices given by
\begin{align*}
K_n=\frac16\begin{bmatrix}
8 & -1 & -1 & & & & \\
-1 & 6 & -2 & -1 & & & \\
-1 & -2 & 6 & -2 & -1 & & \\
& \ddots & \ddots & \ddots & \ddots & \ddots & \\
& & -1 & -2 & 6 & -2 & -1\\
& & & -1 & -2 & 6 & -1\\
& & & & -1 & -1 & 8
\end{bmatrix},\qquad M_n=\frac1{120}\begin{bmatrix}
40 & 25 & 1 & & & & \\
25 & 66 & 26 & 1 & & & \\
1 & 26 & 66 & 26 & 1 & & \\
& \ddots & \ddots & \ddots & \ddots & \ddots & \\
& & 1 & 26 & 66 & 26 & 1\\
& & & 1 & 26 & 66 & 25\\
& & & & 1 & 25 & 40
\end{bmatrix};
\end{align*}
see \cite[Section~7.6]{GLTbookII} for more details. Let
\[ f:[0,\pi]^2\to\mathbb R,\qquad f(\theta_1,\theta_2)=\kappa(\theta_1)\mu(\theta_2)+\mu(\theta_1)\kappa(\theta_2), \]
where
\[ \kappa(\theta)=1-\frac23\cos\theta-\frac13\cos(2\theta),\qquad \mu(\theta)=\frac{11}{20}+\frac{13}{30}\cos\theta+\frac1{60}\cos(2\theta), \]
and let $\Lambda_n=\{\lambda_{1,n},\ldots,\lambda_{n^2,n}\}$ be the multiset consisting of the eigenvalues of $A_n$. We know from \cite[Theorem~7.7]{GLTbookII} that $\{A_n\}_n\sim_\lambda f$, i.e., $\{\Lambda_n\}_n\sim f$. Moreover, numerical experiments reveal that there are no outliers in the biquadratic B-spline case, i.e.,
\[ \Lambda_n\subseteq\left[0,\frac32\right]=\left[\min_{[0,\pi]^2}f,\max_{[0,\pi]^2}f\right]=f([0,\pi]^2)=\mathcal{ER}(f) \]
for all $n$. Thus, Theorem~\ref{th:main4-n} applies in this case. In fact, 
in view of the spectral decompositions obtained in \cite[Section~3.3]{nlaa2018}, the eigenvalues of $A_n$ are given by
\[ f(\xx_{\ii,\nn}^{(n)}),\qquad\ii=\bu,\ldots,\nn, \]
where $\nn=\nn(n)=(n,n)$ as before and $\mathcal G_\nn^{(n)}=\{\xx_{\ii,\nn}^{(n)}\}_{\ii=\bu,\ldots,\nn}$ is the a.u.\ grid in $[0,\pi]^2$ given by
\[ \xx_{\ii,\nn}^{(n)}=\biggl(\frac{i_1\pi}{n},\frac{i_2\pi}{n}\biggr),\qquad \ii=\bu,\ldots,\nn. \]
\end{example}

\begin{example}\label{e5}
Consider the following second-order differential problem:
\begin{equation*}
\left\{\begin{aligned}
&-u''(x)=g(x), &&x\in(0,1),\\[3pt]
&u(0)=\alpha,\quad u(1)=\beta.
\end{aligned}\right.
\end{equation*}
In the classical Gelerkin method with basis functions 
given by the $2n-1$ B-splines of degree $2$ and smoothness $C^0([0,1])$ defined over the uniform knot sequence $\{0,0,0,\frac1n,\frac1n,\frac2n,\frac2n,\ldots,\frac{n-1}n,\frac{n-1}n,1,1,1\}$ and vanishing at the boundary points $x=0$ and $x=1$, the computation of the numerical solution reduces to solving a linear system whose coefficient matrix is the symmetric $(2n-1)\times(2n-1)$ matrix given by
\begin{equation*}
A_n = \frac13
\left[\,\begin{array}{|rr|rr|rr|rr|rr|r}
\cline{1-4}
4 & -2 & & & \multicolumn{7}{r}{} \\
-2 & 8 & -2 & -2 & \multicolumn{7}{r}{} \\
\cline{1-6}
& -2 & 4 & -2 & & & \multicolumn{5}{r}{} \\
& -2 & -2 & 8 & -2 & -2 & \multicolumn{5}{r}{} \\
\cline{1-6}
\multicolumn{2}{r}{} & \ddots & \multicolumn{1}{r}{} & \ddots & \multicolumn{1}{r}{} & \ddots & \multicolumn{4}{r}{} \\
\multicolumn{3}{r}{} & \multicolumn{1}{r}{\ddots} & & \multicolumn{1}{r}{\ddots} & & \multicolumn{1}{r}{\ddots} & \multicolumn{3}{r}{} \\
\cline{5-10}
\multicolumn{2}{r}{} & & & & -2 & 4 & -2 & & & \\
\multicolumn{2}{r}{} & & & & -2 & -2 & 8 & -2 & -2 & \\
\cline{5-10}
\multicolumn{4}{r}{} & & & & -2 & 4 & -2 & \\
\multicolumn{4}{r}{} & & & & -2 & -2 & 8 & -2\\
\cline{7-10}
\multicolumn{8}{r}{} & & \multicolumn{1}{r}{-2} & 4
\end{array}\,\right];
\end{equation*}
see \cite[Section~2.3.2]{Tom-paper} for more details. Let
\[ f(\theta)=\frac13\left[\begin{array}{cc}
4 & -2-2\e^{\i\theta}\\
-2-2\e^{-\i\theta} & 8-4\cos\theta
\end{array}\right]:[0,\pi]\to\mathbb C^{2\times 2}, \]
and let $\Lambda_n=\{\lambda_{1,n},\ldots,\lambda_{2n-1,n}\}$ be the multiset consisting of the eigenvalues of $A_n$ (sorted in increasing order for later convenience). We know from \cite[Section~2.3.2]{Tom-paper} and \cite[Theorem~6.5]{GLTbookIII} that $\{A_n\}_n\sim_\lambda f$, i.e., $\{\Lambda_n\}_n\sim f$. By Definition~\ref{dd}, the latter is equivalent to $\{\Lambda_n\}_n\sim\mathop{\rm diag}(f_1,f_2)$, where $f_1,f_2:[0,\pi]\to\mathbb R$ are given by
\begin{align}
f_1(\theta)&=\lambda_1(f(\theta))=2-\frac23\cos\theta-\frac23\sqrt{3+\cos^2\theta},\label{f1theta}\\[4pt]
f_2(\theta)&=\lambda_2(f(\theta))=2-\frac23\cos\theta+\frac23\sqrt{3+\cos^2\theta}.\label{f2theta}
\end{align}
Figure~\ref{Bspline_p2k0} shows the graphs of the functions $f_{1,2}(\theta)$ and the set of eigenvalues $\Lambda_n$ for $n=20$. The eigenvalues $\lambda_{i,n}$, $i=1,\ldots,2n-1$, 
are positioned at $\frac{i\pi}{n}$ for $i=1,\ldots,n$ and $\frac{(i-n)\pi}n$ for $i=n+1,\ldots,2n-1$. Note that 
\begin{align*}
\mathcal{ER}(f_1)&=f_1([0,\pi])=[f_1(0),f_1(\pi)]=\bigl[0,\textstyle{\frac43}\bigr],\\
\mathcal{ER}(f_2)&=f_2([0,\pi])=[f_2(0),f_2(\pi)]=\bigl[\textstyle{\frac83},4\bigr].
\end{align*}
From the figure, we may assume that the hypotheses of Theorem~\ref{eccoGi'} are satisfied with the partition $\{\tilde\Lambda_{n,1},\tilde\Lambda_{n,2}\}$ of $\Lambda_n$ given by $\tilde\Lambda_{n,1}=\{\lambda_{1,n},\ldots,\lambda_{n,n}\}$ and $\tilde\Lambda_{n,2}=\{\lambda_{n+1,n},\ldots,\lambda_{2n-1,n}\}$. Thus, by Theorem~\ref{eccoGi'}, for every $n$ there exists a partition $\{\Lambda_{n,1},\Lambda_{n,2}\}$ of $\Lambda_n$---which must necessarily coincide with the original partition $\{\tilde\Lambda_{n,1},\tilde\Lambda_{n,2}\}$---with the following properties.
\begin{itemize}[nolistsep,leftmargin=*]
	\item $|\Lambda_{n,1}|=|\tilde\Lambda_{n,1}|=n$ and $|\Lambda_{n,2}|=|\tilde\Lambda_{n,2}|=n-1$.
	\item $\Lambda_{n,1}\subseteq\bigl[-\delta_n,\frac43+\delta_n\bigr]$ and $\Lambda_{n,2}\subseteq\bigl[\frac83-\delta_n,4+\delta_n\bigr]$ for some $\delta_n\to0$ as $n\to\infty$.
	\item $\{\Lambda_{n,1}\}_n\sim f_1$ and $\{\Lambda_{n,2}\}_n\sim f_2$.
	\item If $\{\theta_{i,n}\}_{i=1,\ldots,n}$ and $\{\vartheta_{i,n}\}_{i=1,\ldots,n-1}$ are any two a.u.\ grids in $[0,\pi]$ contained in $[0,\pi]$, then 
	\begin{align}
	M_{n,1}&=\max_{i=1,\ldots,n}|f_1(\theta_{i,n})-\lambda_{i,n}|\to0\ \,\mbox{as}\, \ n\to\infty,\label{max1-->0}\\
	M_{n,2}&=\max_{i=1,\ldots,n-1}|f_2(\vartheta_{i,n})-\lambda_{i+n,n}|\to0\ \,\mbox{as}\, \ n\to\infty.\label{max2-->0}
	\end{align}
\end{itemize}
Actually, we can say more than \eqref{max1-->0}--\eqref{max2-->0}. Indeed, numerical experiments reveal that $M_{n,1}=M_{n,2}=0$ for all $n$ if we choose the a.u.\ grids suggested by Figure~\ref{Bspline_p2k0}, i.e., $\theta_{i,n}=\frac{i\pi}n$, $i=1,\ldots,n$, and $\vartheta_{i,n}=\frac{i\pi}n$, $i=1,\ldots,n-1$. 
In other words, the eigenvalues of $A_n$ are explicitly given by
\[ \{f_1(\theta_{i,n}):i=1,\ldots,n\}\cup\{f_2(\vartheta_{i,n}):i=1,\ldots,n-1\}. \]
We refer the reader to Appendix~\ref{a} for further explicit formulas for the eigenvalues of B-spline Galerkin discretization matrices. These formulas have been obtained through numerical experiments and provide further confirmations of Theorem~\ref{eccoGi'}.

\begin{figure}
\centering
\centering
\includegraphics[width=0.5\textwidth]{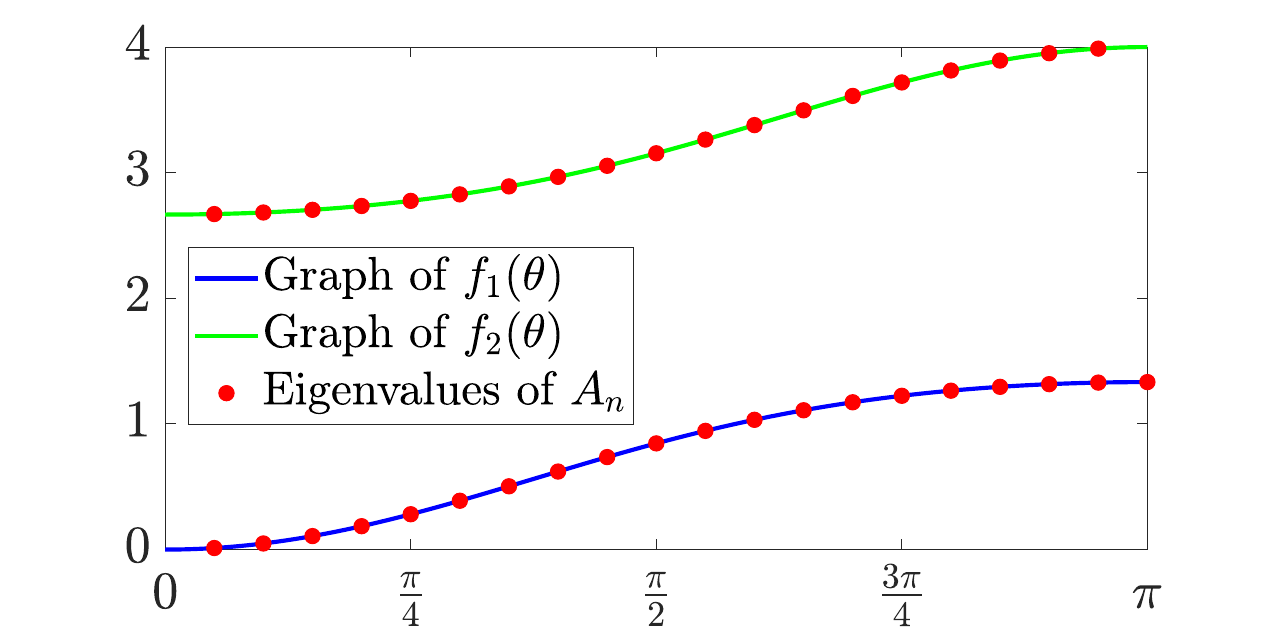}
\caption{Example~\ref{e5}: Graphs of the functions $f_{1,2}(\theta)$ in \eqref{f1theta}--\eqref{f2theta} and set of eigenvalues $\Lambda_n$ for $n=20$.}
\label{Bspline_p2k0}
\end{figure}

\end{example}

\section{Conclusions}\label{conc}

We have provided new insights into the notion of asymptotic (spectral) distribution by extending previous results due to Bogoya, B\"ottcher, Grudsky, and Maximenko \cite{maximum_norm,Bottcher-ded-Grudsky}. In particular, using the concept of monotone rearrangement (quantile function) and matrix analysis arguments from the theory of GLT sequences, we have shown that, under suitable assumptions, if the asymptotic distribution of a sequence of multisets $\Lambda_n=\{\lambda_{1,n},\ldots,\lambda_{d_n,n}\}$ is described by a function $f$ in the sense of Definition~\ref{adn}, then we observe the uniform convergence to $0$ of the difference between a proper permutation of the vector $[\lambda_{1,n},\ldots,\lambda_{d_n,n}]$ and the vector of samples of $f$ over an a.u.\ grid in the domain of $f$.
We have also illustrated through numerical experiments the main results of the paper.

We conclude this paper with a remark. The notion of asymptotic distribution given in Definition~\ref{adn} is deeply connected with the notion of vague convergence of probability measures, which is also referred to as convergence in distribution in \cite{Bottcher-ded-Grudsky}. More precisely, as shown in \cite{sm}, 
\begin{itemize}[nolistsep,leftmargin=*]
	\item if $\{\Lambda_n\}_n$ is as in Definition~\ref{adn}, then we can associate with each $\Lambda_n$ the atomic probability measure on $\mathbb C$ defined as
	\[ \mu_{\Lambda_n}=\frac1{d_n}\sum_{i=1}^{d_n}\delta_{\lambda_{i,n}}, \]
	where $\delta_z$ is the Dirac probability measure such that $\delta_z(E)=1$ if $z\in E$ and $\delta_z(E)=0$ otherwise;
	\item if $f$ is as in Definition~\ref{adn} with $k=1$, then we can associate with $f$ a uniquely determined probability measure $\mu_f$ on $\mathbb C$ such that
	\[ \frac1{\mu_k(D)}\int_DF(f(\xx)){\rm d}\xx=\int_\mathbb CF(z){\rm d}\mu_f(z),\qquad\forall\,F\in C_c(\mathbb C). \]
\end{itemize}
The asymptotic distribution relation 
\[ \lim_{n\to\infty}\frac1{d_n}\sum_{i=1}^{d_n}F(\lambda_{i,n})=\frac1{\mu_d(\Omega)}\int_\Omega F(f(\xx)){\rm d}\xx,\qquad\forall\,F\in C_c(\mathbb C), \]
can therefore be rewritten as
\begin{equation*}
\lim_{n\to\infty}\int_\mathbb C F(z){\rm d}\mu_{\Lambda_n}(z)=\int_\mathbb CF(z){\rm d}\mu_f(z),\qquad\forall\,F\in C_c(\mathbb C),
\end{equation*}
which is equivalent to saying that $\mu_{\Lambda_n}$ converges vaguely to $\mu_f$ \cite[Definition~13.12]{proba_comprehensive}. 
This equivalence allows for a reinterpretation of the main results of this paper in a probabilistic perspective. In this regard, it is worth pointing out that any multiset of complex numbers $\Lambda_n=\{\lambda_{1,n},\ldots,\lambda_{d_n,n}\}$ coincides with the spectrum of a matrix $A_n$ (take $A_n={\rm diag}(\lambda_{1,n},\ldots,\lambda_{d_n,n})$) and any probability measure $\mu$ on $\mathbb C$ coincides with $\mu_f$ for some $f$ \cite[Corollary~1]{sm}.

\appendix
\section{Formulas for the eigenvalues of B-spline Galerkin discretization matrices}\label{a}
Consider the following second-order differential eigenvalue problem:
\begin{equation*}
\left\{\begin{aligned}
&-u_j''(x)=\lambda_ju_j(x), &&\quad x\in(0,1),\\[3pt]
&u_j(0)=0,\quad u_j(1)=0.
\end{aligned}\right.
\end{equation*}
Let $p\ge1$ and $0\le k\le p-1$. In the classical Gelerkin method with basis functions given by the $n(p-k)+k-1$ B-splines $B_{2,p,k},\ldots,B_{n(p-k)+k,p,k}$ of degree $p$ and smoothness $C^k([0,1])$ defined over the uniform knot sequence
\[ \biggl\{\,\underbrace{0,\ldots,0}_{p+1},\underbrace{\frac1n,\ldots,\frac1n}_{p-k},\underbrace{\frac2n,\ldots,\frac2n}_{p-k},\ldots,\underbrace{\frac{n-1}n,\ldots,\frac{n-1}n}_{p-k},\underbrace{1,\ldots,1}_{p+1}\,\biggr\} \]
and vanishing at the boundary points $x=0$ and $x=1$, the computation of the numerical solution reduces to solving a linear system whose coefficient matrix is the $(n(p-k)+k-1)\times(n(p-k)+k-1)$ matrix given by
\[ L_{n,p,k} = M_{n,p,k}^{-1}K_{n,p,k},  \]
where $K_{n,p,k}$, $M_{n,p,k}$ are the symmetric positive definite matrices given by
\begin{align*}
K_{n,p,k}&=\left[\int_0^1B_{j+1,p,k}'(x)B_{i+1,p,k}'(x){\rm d}x\right]_{i,j=1}^{n(p-k)+k-1},\\
M_{n,p,k}&=\left[\int_0^1B_{j+1,p,k}(x)B_{i+1,p,k}(x){\rm d}x\right]_{i,j=1}^{n(p-k)+k-1};
\end{align*}
see \cite[Section~2.5]{Tom-paper} for more details. We remark that, for $p=2$ and $k=1$, the matrix $n^{-1}K_{n,2,1}$ coincides with the matrix $A_n$ of Example~\ref{e5}. 
As proved in \cite[Theorem~6.17]{GLTbookIII}, we have
\begin{align*}
\{n^{-1}K_{n,p,k}\}_n&\sim_\lambda f_{p,k},\\ 
\{nM_{n,p,k}\}_n&\sim_\lambda h_{p,k},\\ 
\{n^{-2}L_{n,p,k}\}_n&\sim_\lambda e_{p,k}=(h_{p,k})^{-1}f_{p,k}, 
\end{align*}
where:
\begin{itemize}[nolistsep,leftmargin=*]
	\item the functions $f_{p,k},h_{p,k}:[0,\pi]\to\mathbb C^{(p-k)\times(p-k)}$ are given by
	\begin{align*}
	f_{p,k}(\theta)=\sum_{\ell\in\mathbb Z}K_{p,k}^{[\ell]}\e^{\i\ell\theta}=K_{p,k}^{[0]}+\sum_{\ell>0}\Bigl(K_{p,k}^{[\ell]}\e^{\i\ell\theta}+(K_{p,k}^{[\ell]})^T\e^{-\i\ell\theta}\Bigr),\\
	h_{p,k}(\theta)=\sum_{\ell\in\mathbb Z}M_{p,k}^{[\ell]}\e^{\i\ell\theta}=M_{p,k}^{[0]}+\sum_{\ell>0}\Bigl(M_{p,k}^{[\ell]}\e^{\i\ell\theta}+(M_{p,k}^{[\ell]})^T\e^{-\i\ell\theta}\Bigr);
	\end{align*}
	\item the blocks $K_{p,k}^{[\ell]}$, $M_{p,k}^{[\ell]}$ are given by
	\begin{alignat*}{3}
	K_{p,k}^{[\ell]}&=\left[\int_{\mathbb R}\beta_{j,p,k}'(t)\beta_{i,p,k}'(t-\ell){\rm d}t\right]_{i,j=1}^{p-k}, &\qquad \ell&\in\mathbb Z,\\
	M_{p,k}^{[\ell]}&=\left[\int_{\mathbb R}\beta_{j,p,k}(t)\beta_{i,p,k}(t-\ell){\rm d}t\right]_{i,j=1}^{p-k}, &\qquad \ell&\in\mathbb Z;
	\end{alignat*}
	\item the functions $\beta_{1,p,k},\ldots,\beta_{p-k,p,k}:\mathbb R\to\mathbb R$ are the first $p-k$ B-splines defined on the knot sequence
	\[ \underbrace{0,\ldots,0}_{p-k},\underbrace{1,\ldots,1}_{p-k},\ldots,\underbrace{\eta,\ldots,\eta}_{p-k},\qquad\eta=\left\lceil\frac{p+1}{p-k}\right\rceil. \]
\end{itemize}
We remark that, for every $\theta\in[0,\pi]$, the matrix $f_{p,k}(\theta)$ is Hermitian positive semidefinite and the matrix $h_{p,k}(\theta)$ is Hermitian positive definite; see \cite[Remark~2.1]{Tom-paper}. 
The following {\sc Maple} worksheet computes $f_{p,k}(\theta)$ and $h_{p,k}(\theta)$ for the input pair $p,k$ defined at the beginning. Here, we have chosen $p=2$ and $k=0$ for comparison with 
Example~\ref{e5}.\\
\rule{\textwidth}{0.5pt}
$\rd>\bk p:=2:\ k:=0:$\\
$\rd>\bk\displaystyle\eta:=\mbox{\rm ceil}\biggl(\frac{p+1}{p-k}\biggr):$\\
$\rd>\bk\mbox{\emph{with}}(\mbox{\emph{CurveFitting}}):\ \mbox{\emph{with}}(\mbox{\emph{LinearAlgebra}}):$\\
$\rd>\bk\mbox{\emph{ReferenceKnotSequence}}:=\left[\mbox{\emph{seq}}(\mbox{\emph{seq}}(j,i=1..p-k),j=0..\eta)\right]:$\\
$\rd>\bk\#\ \mbox{\emph{Construction of the reference B-splines $\beta_{1,p,k},\ldots,\beta_{p-k,p,k}$}}$\\
$\rd>\bk\beta:=[\,\,]:$\\
$\hphantom{\rd>\bk{}}\textbf{for}\ i\ \textbf{from}\ 1\ \textbf{to}\ p-k\ \textbf{do}$\\
$\hphantom{\rd>\bk{}}\beta:=[\mbox{\emph{op}}(\beta),\mbox{\emph{BSpline}}(p+1,t,\mbox{\emph{knots}}=\mbox{\emph{ReferenceKnotSequence}}[i..i+p+1])]:$\\
$\hphantom{\rd>\bk{}}\textbf{od}:$\\
$\rd>\bk\#\ \mbox{\emph{Derivatives of the reference B-splines $\beta_{1,p,k},\ldots,\beta_{p-k,p,k}$}}$\\
$\rd>\bk D\_\beta:=\mbox{\emph{simplify}}(\mbox{\emph{diff}}(\beta,t)):$\\
$\rd>\bk\#\ \mbox{\emph{Construction of the nonzero K\_blocks $K_{p,k}^{[\ell]}$ and the nonzero M\_blocks $M_{p,k}^{[\ell]}$}}$\\
$\rd>\bk \mbox{\emph{Kblocks}}:=[\,\,]:\ \mbox{\emph{Mblocks}}:=[\,\,]:$\\
$\hphantom{\rd>\bk{}}\textbf{for}\ l\ \textbf{from}\ 0\ \textbf{to}\ \eta-1\ \textbf{do}$\\
$\hphantom{\rd>\bk{}}\mbox{\emph{K}}:=\mbox{\emph{Matrix}}(p-k):\ \mbox{\emph{M}}:=\mbox{\emph{Matrix}}(p-k):$\\
$\hphantom{\rd>\bk{}}\textbf{for}\ r\ \textbf{from}\ 1\ \textbf{to}\ p-k\ \textbf{do}:\ \textbf{for}\ s\ \textbf{from}\ 1\ \textbf{to}\ p-k\ \textbf{do}$\\
$\hphantom{\rd>\bk{}}\mbox{\emph{K}}(r,s):=\displaystyle\int_0^\eta D\_\beta[s]\cdot\mbox{\emph{eval}}(D\_\beta[r],t=t-l){\rm d}t:\ \mbox{\emph{M}}(r,s):=\int_0^\eta\beta[s]\cdot\mbox{\emph{eval}}(\beta[r],t=t-l){\rm d}t:$\\
$\hphantom{\rd>\bk{}}\textbf{od}:\ \textbf{od}:$\\
$\hphantom{\rd>\bk{}}\mbox{\emph{Kblocks}}:=[\mbox{\emph{op}}(\mbox{\emph{Kblocks}}),\mbox{\emph{K}}]:\ \mbox{\emph{Mblocks}}:=[\mbox{\emph{op}}(\mbox{\emph{Mblocks}}),\mbox{\emph{M}}]:$\\
$\hphantom{\rd>\bk{}}\textbf{od}:$\\
$\rd>\bk\#\ \mbox{\emph{Construction of the functions $f=f_{p,k}$ and $h=h_{p,k}$}}$\\
$\rd>\bk f(\theta):=\mbox{\emph{Kblocks}}[1]:\ h(\theta):=\mbox{\emph{Mblocks}}[1]:$\\
$\hphantom{\rd>\bk{}}\textbf{for}\ j\ \textbf{from}\ 2\ \textbf{to}\ \eta\ \textbf{do}$\\
$\hphantom{\rd>\bk{}}f(\theta):=\mbox{\emph{simplify}}(f(\theta)+\mbox{\emph{Kblocks}}[j]\cdot{\rm exp}({\rm I}\cdot(j-1)\cdot\theta)+\mbox{\emph{Transpose}}(\mbox{\emph{Kblocks}}[j])\cdot{\rm exp}(-I\cdot(j-1)\cdot\theta)):$\\
$\hphantom{\rd>\bk{}}h(\theta):=\mbox{\emph{simplify}}(h(\theta)+\mbox{\emph{Mblocks}}[j]\cdot{\rm exp}({\rm I}\cdot(j-1)\cdot\theta)+\mbox{\emph{Transpose}}(\mbox{\emph{Mblocks}}[j])\cdot{\rm exp}(-I\cdot(j-1)\cdot\theta)):$\\
$\hphantom{\rd>\bk{}}\textbf{od}:\ f(\theta),\ h(\theta)$
\[ \bl \left[\begin{matrix}\dfrac43 & -\dfrac23-\dfrac{2\,{\rm e}^{{\rm I}\theta}}3\\[10pt] -\dfrac23-\dfrac{2\,{\rm e}^{-{\rm I}\theta}}3 & \dfrac83-\dfrac{4\cos(\theta)}3\end{matrix}\right],\ \left[\begin{matrix}\dfrac2{15} & \dfrac1{10}+\dfrac{{\rm e}^{{\rm I}\theta}}{10}\\[10pt] \dfrac1{10}+\dfrac{{\rm e}^{-{\rm I}\theta}}{10} & \dfrac25+\dfrac{\cos(\theta)}{15}\end{matrix}\right] \bk \]
\rule{\textwidth}{0.5pt}
In Theorems~\ref{t_appK}--\ref{t_appL}, we report the formulas for the eigenvalues of the matrices $n^{-1}K_{n,p,k}$, $nM_{n,p,k}$, $n^{-2}L_{n,p,k}$ that we obtained through high-precision numerical computations performed 
in \textsc{Julia}, using an accuracy of at least $100$ decimal digits.
In what follows, for every $\theta\in[0,\pi]$ and every $j=1,\ldots,p-k$, we define $\lambda_j(f_{p,k}(\theta))$ (resp., $\lambda_j(h_{p,k}(\theta))$, $\lambda_j(e_{p,k}(\theta))$) as the $j$th eigenvalue of $f_{p,k}(\theta)$ (resp., $h_{p,k}(\theta)$, $e_{p,k}(\theta)$) according to the increasing ordering:
\begin{alignat*}{3}
&\lambda_1(f_{p,k}(\theta))\le\ldots\le\lambda_{p-k}(f_{p,k}(\theta)), &&\qquad\theta\in[0,\pi],\\
&\lambda_1(h_{p,k}(\theta))\le\ldots\le\lambda_{p-k}(h_{p,k}(\theta)), &&\qquad\theta\in[0,\pi],\\
&\lambda_1(e_{p,k}(\theta))\le\ldots\le\lambda_{p-k}(e_{p,k}(\theta)), &&\qquad\theta\in[0,\pi].
\end{alignat*}
Moreover, for every $n\ge1$, we define the uniform grids 
\begin{equation*}
\Theta_n=\biggl\{\frac{i\pi}n:i=0,\ldots,n\biggr\},\qquad\Theta_n^0=\Theta_n\setminus\{0\},\qquad\Theta_n^\pi=\Theta_n\setminus\{\pi\},\qquad\Theta_n^{0,\pi}=\Theta_n\setminus\{0,\pi\}.
\end{equation*}

\begin{theorem}\label{t_appK}
Let $1\le p\le100$ and $0\le k\le\min(1,p-1)$. Then, for every $n=1,\ldots,100$, the eigenvalues of $n^{-1}K_{n,p,k}$ are given by
\begin{equation*}\label{lambdas_K}
\lambda_j(f_{p,k}(\theta)),\qquad\theta\in\Theta_{n,p,k}^{(j)},\qquad j=1,\ldots,p-k,
\end{equation*}
where the grid $\Theta_{n,p,k}^{(j)}$ belongs to $\{\Theta_n,\Theta_n^0,\Theta_n^\pi,\Theta_n^{0,\pi}\}$  
and is given in Figure~{\rm\ref{K0K1_triangles}}. 
\end{theorem}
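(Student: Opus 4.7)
The statement is a finite-range verification claim (at most $100$ values of $p$, at most two values of $k$, and at most $100$ values of $n$), so my plan is a rigorous high-precision computational verification rather than a closed-form theoretical argument. The target equality is eigenvalues of $n^{-1}K_{n,p,k}$ versus specific samples of $\lambda_j(f_{p,k}(\cdot))$ on prescribed uniform subgrids of $\Theta_n$.

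First, I would construct $K_{n,p,k}$ exactly. Since the knots are rational ($0,1/n,2/n,\ldots,1$) and $B$-splines on rational knots have piecewise polynomial form with rational coefficients, each entry $\int_0^1 B_{j+1,p,k}'\,B_{i+1,p,k}'$ is a rational number computable via the Cox--de~Boor recursion combined with exact rational arithmetic. I would build $n^{-1}K_{n,p,k}$ as a rational matrix for each admissible $(n,p,k)$. In parallel, I would assemble the blocks $K_{p,k}^{[\ell]}$ from the reference B-splines $\beta_{i,p,k}$ and hence form $f_{p,k}(\theta)$ as a $(p-k)\times(p-k)$ matrix of trigonometric polynomials with rational coefficients.

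Second, I would evaluate $f_{p,k}(\theta)$ at each $\theta=i\pi/n$ in the grid $\Theta_{n,p,k}^{(j)}$ prescribed by Figure~\ref{K0K1_triangles} and compute its eigenvalues, extracting the $j$th one in increasing order. For $k=0$, $p=1$ the matrix $f_{p,k}$ is scalar; for $k=0$, $p=2$ and $k=1$, $p=2$ (as in Example~\ref{e5}) it is $2\times 2$ with closed-form eigenvalues via the quadratic formula; for higher $p$ with $p-k\le 2$ the same applies, and when $p-k=1$ everything is scalar. I would then compare, at a working precision of at least $100$ decimal digits, the sorted list of eigenvalues of $n^{-1}K_{n,p,k}$ with the sorted concatenation of the samples $\{\lambda_j(f_{p,k}(\theta)):\theta\in\Theta_{n,p,k}^{(j)}\}_{j=1}^{p-k}$. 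A cardinality sanity check must come first: summing $|\Theta_{n,p,k}^{(j)}|$ over $j$ should give exactly $n(p-k)+k-1$, the size of $K_{n,p,k}$; this is a consistency constraint that the partitioning in Figure~\ref{K0K1_triangles} must satisfy.

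The main obstacle is the gap between ``numerical agreement to $100$ digits'' and genuine equality. I would address this in two ways. For small $p-k$ the eigenvalues of $f_{p,k}(\theta)$ are algebraic functions of $\cos\theta$ and the matrix $n^{-1}K_{n,p,k}$ has rational entries, so the conjectured equality is an algebraic identity between algebraic numbers; it can be certified exactly by running the comparison in a symbolic system over $\mathbb{Q}(\cos(i\pi/n))$. For larger $p-k$, where symbolic eigenvalue extraction becomes expensive, a quantitative bound on the bit-size of the matrix entries shows that any genuine discrepancy between an eigenvalue of $n^{-1}K_{n,p,k}$ and a proposed sample would be detectable well above the $10^{-100}$ noise floor, so equality at $100$ digits is tantamount to a proof in this finite range. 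A secondary difficulty is the correct reading of Figure~\ref{K0K1_triangles}: the assignment $j\mapsto\Theta_{n,p,k}^{(j)}\in\{\Theta_n,\Theta_n^0,\Theta_n^\pi,\Theta_n^{0,\pi}\}$ has to be transcribed and cross-validated by plotting the computed eigenvalues of $n^{-1}K_{n,p,k}$ against the curves $\theta\mapsto\lambda_j(f_{p,k}(\theta))$ (as was done in Figure~\ref{Bspline_p2k0}) to confirm that each eigenvalue lies exactly on the predicted branch at a predicted node, and that the endpoint inclusions/exclusions are consistent with the kernel/boundary structure of $f_{p,k}$ at $\theta=0$ and $\theta=\pi$.
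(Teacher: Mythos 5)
Your proposal takes essentially the same approach as the paper: Theorem~\ref{t_appK} is established there by exactly the kind of high-precision numerical verification you describe (computations in \textsc{Julia} to at least $100$ decimal digits over the finite range of admissible $(n,p,k)$, as stated just before the theorem). The paper does not address the gap between $100$-digit agreement and exact equality — the symbolic certification over $\mathbb Q(\cos(i\pi/n))$ for small $p-k$ and the separation bound for larger $p-k$ that you sketch are a genuine strengthening of what the authors actually record, and the extension beyond the verified range is correctly left to Conjecture~\ref{n>=1}.
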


\begin{figure}
\centering
\includegraphics[width=0.435\textheight]{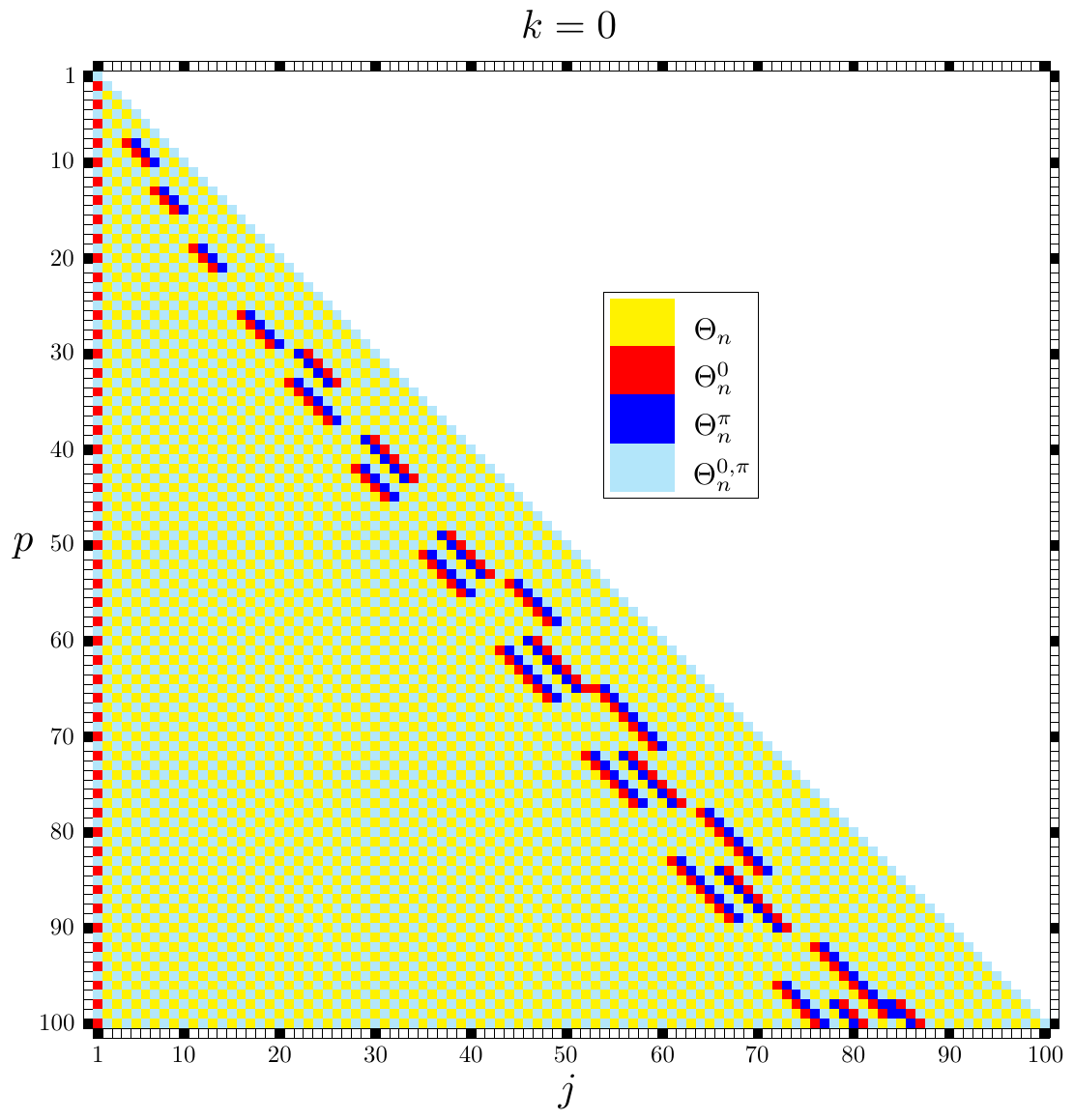}\\[10pt]
\includegraphics[width=0.435\textheight]{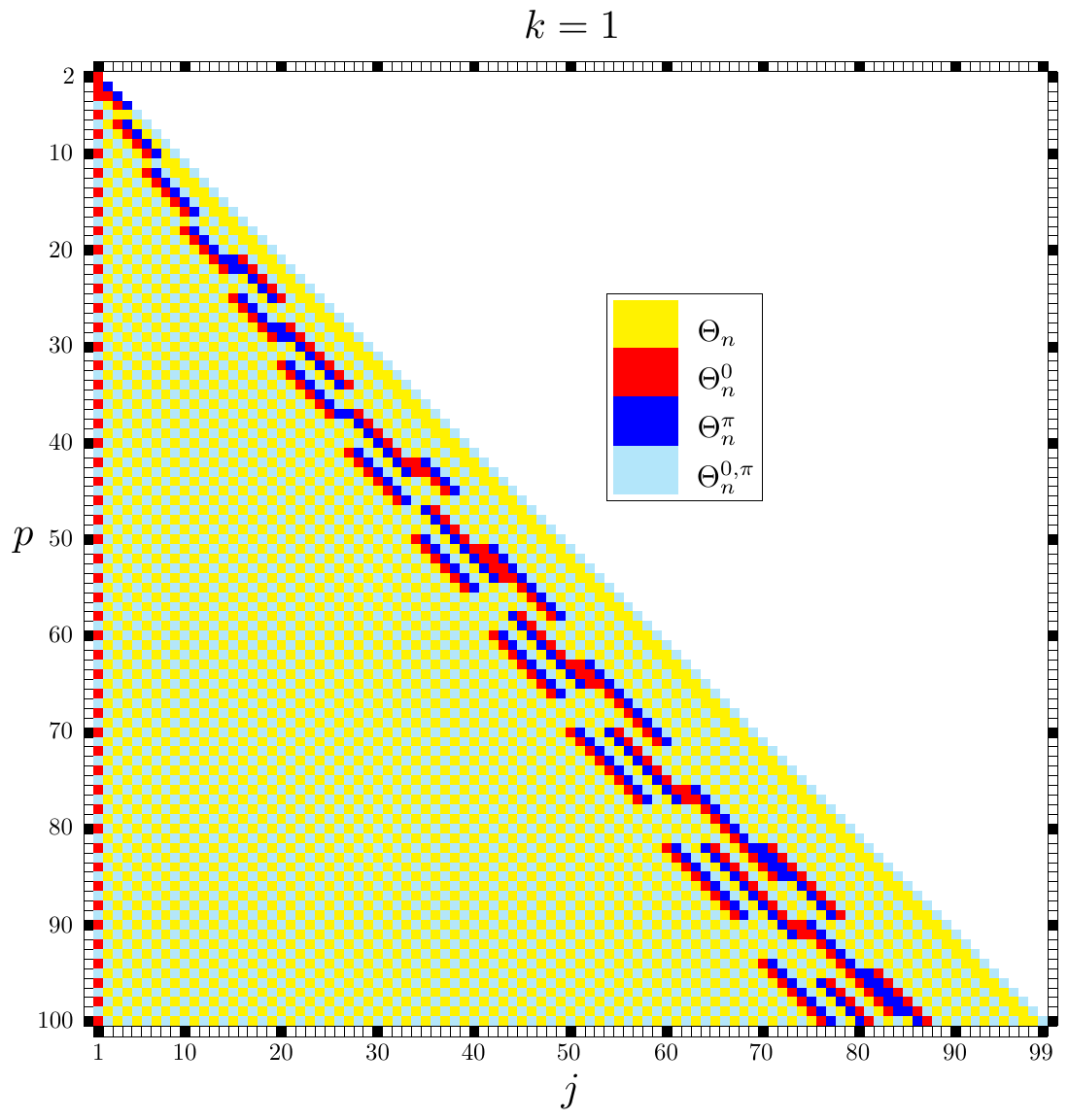}
\caption{Grid $\Theta_{n,p,k}^{(j)}$ for the values of $n,p,k$ considered in Theorem~\ref{t_appK} and for $j=1,\ldots,p-k$. For example, for $p=2$, $k=0$, and every $n=1,\ldots,100$, we have $\Theta_{n,2,0}^{(1)}=\Theta_n^0$ and $\Theta_{n,2,0}^{(2)}=\Theta_n^{0,\pi}$, in accordance with Example~\ref{e5}.}
\label{K0K1_triangles}
\end{figure}

\begin{conjecture}\label{n>=1}
Theorem~\ref{t_appK} continues to hold if we replace ``for every $n=1,\ldots,100$'' with ``for every $n\ge1$''.
\end{conjecture}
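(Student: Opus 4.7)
The plan is to upgrade the numerical verification at $n\le100$ to a structural proof via an explicit eigenvector construction exploiting the block-banded structure of $K_{n,p,k}$. The matrix $K_{n,p,k}$ is symmetric, with its interior exactly matching the block Toeplitz matrix $T_n(f_{p,k})$ (after normalization by $n^{-1}$) and with rank-$O(1)$ corrections in the first and last few rows and columns. For each $\theta\in(0,\pi)$ and each unit eigenvector $w_j(\theta)\in\mathbb C^{p-k}$ of $f_{p,k}(\theta)$ with eigenvalue $\lambda_j(f_{p,k}(\theta))$, I would look for an eigenvector of $n^{-1}K_{n,p,k}$ whose $\ell$-th block has the form
\[ v^{(\ell)}=\e^{\i\,\ell\theta}w_j(\theta)+c\,\e^{-\i\,\ell\theta}\,\overline{w_j(\theta)} \]
for some $\ell$-independent scalar $c=c(\theta,j,n)$, with appropriate truncation conventions in the boundary blocks. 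I emphasize that Theorem~\ref{eccoGi'} only delivers an \emph{asymptotic} matching of the form $\lambda_{\tau_n(i),n}\approx\lambda_j(f_{p,k}(\theta_{i,n}))$, whereas the conjecture demands \emph{exact} equality for every $n$, so the proof cannot rely on GLT-type arguments alone: one really needs an algebraic witness.

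First I would verify that, with this ansatz, the interior rows of the eigenequation $n^{-1}K_{n,p,k}\,v=\lambda_j(f_{p,k}(\theta))\,v$ hold identically, using the translation property of block Toeplitz matrices together with $f_{p,k}(\theta)w_j(\theta)=\lambda_j(f_{p,k}(\theta))w_j(\theta)$. The left- and right-boundary rows then reduce to a pair of trigonometric equations in $\theta$ whose coefficients are built from the explicit B-spline boundary blocks readable off the \textsc{Maple} worksheet of this Appendix. Eliminating $c$ between the two equations should leave a single trigonometric equation in $\theta$ whose zeros in $(0,\pi)$ match precisely the interior points of $\Theta_{n,p,k}^{(j)}$; the endpoints $\theta=0,\pi$, when present in $\Theta_{n,p,k}^{(j)}$, would correspond to limit/degenerate eigenvectors handled by a separate direct computation.

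The hard part will be twofold. First, when $k=1$ the boundary corrections at the two endpoints are not mirror-symmetric, so cancelling both simultaneously forces a delicate choice of phase encoded jointly in $c$ and in $w_j(\theta)$, and the latter is known only implicitly as a function of $\theta$ through the characteristic polynomial of $f_{p,k}(\theta)$; extracting an explicit boundary resonance equation is therefore the main algebraic obstacle. Second, one must show that the produced eigenvectors exhaust the spectrum, which requires the count identity $\sum_{j=1}^{p-k}|\Theta_{n,p,k}^{(j)}|=n(p-k)+k-1$ (immediate by inspection of Figure~\ref{K0K1_triangles}) together with the strict monotonicity and non-crossing of the branches $\theta\mapsto\lambda_j(f_{p,k}(\theta))$ on $(0,\pi)$, which guarantees distinctness of the produced eigenvalues. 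Branch monotonicity should follow from the positive definiteness of the B-spline Gramian $h_{p,k}(\theta)$ combined with the explicit Laurent-polynomial form of $f_{p,k}(\theta)$, but uniform avoidance of branch crossings is, I expect, the most delicate step, possibly requiring a homotopy argument starting from $\theta=\pi$ (where $f_{p,k}(\pi)$ has an explicit diagonal form in a suitably rotated basis) or a case-by-case discriminant analysis for the small $(p,k)$ matrices $f_{p,k}(\theta)$ (which are only of size at most $p-k\le p$).
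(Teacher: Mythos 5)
The statement you are trying to prove is explicitly labeled a \emph{Conjecture} in the paper, and the paper supplies no proof of it: Theorem~\ref{t_appK}, on which it rests, was established by high-precision numerical computation in \textsc{Julia} over the finite range $1\le p\le100$, $n\le100$, and Conjecture~\ref{n>=1} is the unproven assertion that the observed pattern persists for all $n\ge1$. So there is no ``paper's own proof'' to compare against; you are (correctly) attempting to supply an argument the authors do not have.

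Your proposed route---an explicit eigenvector ansatz $v^{(\ell)}=\e^{\i\ell\theta}w_j(\theta)+c\,\e^{-\i\ell\theta}\overline{w_j(\theta)}$, with interior rows handled automatically by block-Toeplitz translation invariance and boundary rows yielding resonance conditions in $\theta$ whose roots should be the interior nodes of $\Theta_{n,p,k}^{(j)}$, plus a count argument for exhaustiveness---is the natural classical attack for banded Toeplitz-plus-low-rank-boundary matrices and is consistent in spirit with the exact spectral decompositions obtained in \cite{nlaa2018} and underlying Example~\ref{e5}. But it is an outline, not a proof, and you say so yourself. The gaps you flag are genuine and not minor: (i) for $k=1$ the two boundary corner blocks of $K_{n,p,1}$ are not related by a simple flip symmetry, so finding a single scalar $c$ and a phase normalization of $w_j(\theta)$ killing both boundary residuals simultaneously is the crux rather than a formality; (ii) branch non-crossing of $\theta\mapsto\lambda_j(f_{p,k}(\theta))$ on $(0,\pi)$ is not known in general and would need a discriminant analysis that you have not carried out even for small $p$. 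A third gap your sketch leaves implicit is just as serious: the combinatorial pattern in Figure~\ref{K0K1_triangles} dictating which of $\theta=0,\pi$ lie in $\Theta_{n,p,k}^{(j)}$ must drop out of your boundary resonance equations, which in turn requires a closed-form understanding of the parity structure of the corner blocks as a function of $(p,k,j)$; nothing in your plan produces that, and indeed the intricacy of the analogous rule in Theorem~\ref{t_appM} (involving the OEIS sequence A186348) suggests this is where the real difficulty lives. As it stands, the conjecture remains what the paper calls it: a numerically well-supported open problem.
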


To simplify the statement of Theorem~\ref{t_appM}, we define the integer sequence
\[ a(m)=m+\bigl\lfloor\,\sqrt{8m}\,\bigr\rfloor,\qquad m\ge1. \]
The sequence $\{a_m\}_{m=1,2,\ldots}$ is referred to as the A186348 sequence in the on-line encyclopedia of integer sequences (OEIS); see \url{https://oeis.org/A186348}.
For every $p\ge3$, we define
\[ \alpha_p=\mbox{minimum positive integer such that }p\in\left[\,\sum_{m=1}^{\alpha_p}a(m),\sum_{m=1}^{\alpha_p+1}a(m)\right]. \]
It is not difficult to check that $\{p-\alpha_p\}_{p=3,4,\ldots}$ is an increasing sequence such that $p-\alpha_p\ge2$ for all $p\ge3$.

\begin{theorem}\label{t_appM}
Let $1\le p\le100$ and $0\le k\le\min(1,p-1)$. Then, for every $n=1,\ldots,100$, the eigenvalues of $nM_{n,p,k}$ are given by
\begin{equation*}\label{lambdas_M}
\lambda_j(h_{p,k}(\theta)),\qquad\theta\in\Theta_{n,p,k}^{[j]},\qquad j=1,\ldots,p-k,
\end{equation*}
where the grid $\Theta_{n,p,k}^{[j]}$ belongs to $\{\Theta_n,\Theta_n^0,\Theta_n^\pi,\Theta_n^{0,\pi}\}$ and is defined as follows:
\begin{align*}
\Theta_{n,p,0}^{[j]}&=\left\{\begin{aligned}
&\Theta_n^0, &&\mbox{if $p+j$ is odd,}\\
&\Theta_n^\pi, &&\mbox{if $p+j$ is even and $j\ne p$,}\\
&\Theta_n^{0,\pi}, &&\mbox{if $j=p$,}\\
\end{aligned}\right.\\
\Theta_{n,p,1}^{[j]}&=\left\{\begin{aligned}
&\Theta_n, &&\mbox{if $p>2$ and $j=p-\alpha_p-1$,}\\
&\Theta_n^0, &&\mbox{if $p=2$; or}\\
&&&\mbox{if $p>2$ and either $j<p-\alpha_p-1$ and $p+j$ is odd or $p-\alpha_p-1<j<p-1$ and $p+j$ is even,}\\
&\Theta_n^\pi, &&\mbox{if $p>2$ and either $j<p-\alpha_p-1$ and $p+j$ is even or $p-\alpha_p-1<j<p-1$ and $p+j$ is odd,}\\
&\Theta_n^{0,\pi}, &&\mbox{if $p>2$ and $j=p-1$.} 
\end{aligned}\right.
\end{align*}
\end{theorem}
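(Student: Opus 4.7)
The plan is to prove the theorem by the same computer-assisted strategy that underlies Theorem~\ref{t_appK}, namely, direct high-precision numerical verification, supplemented by a structural discussion that explains why such sampling formulas are natural. For each triple $(n,p,k)$ with $n\in\{1,\ldots,100\}$, $p\in\{1,\ldots,100\}$ and $k\in\{0,\ldots,\min(1,p-1)\}$, I would assemble $nM_{n,p,k}$ explicitly from the B-spline basis (as in the \textsc{Maple} worksheet above) and compute its eigenvalues in \textsc{Julia} with a working precision of at least $100$ decimal digits; in parallel, I would compute, to the same precision, the sample values $\lambda_j(h_{p,k}(\theta))$ for $\theta\in\Theta_{n,p,k}^{[j]}$ and every $j=1,\ldots,p-k$. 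The theorem is then certified by checking, case by case, that the two multisets coincide to within a tolerance orders of magnitude below the working precision.

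The underlying theoretical picture, which guides (and is strongly suggested by) the numerical evidence, rests on three facts already established in the paper. First, the GLT machinery gives $\{nM_{n,p,k}\}_n\sim_\lambda h_{p,k}$, so the spectrum of $nM_{n,p,k}$ has an asymptotic distribution described by $h_{p,k}$. Second, for each $\theta\in[0,\pi]$ the matrix $h_{p,k}(\theta)$ is Hermitian positive definite, whence the ``spectral branches'' $\theta\mapsto\lambda_j(h_{p,k}(\theta))$, $j=1,\ldots,p-k$, are well-defined continuous functions on $[0,\pi]$. Third, Theorem~\ref{eccoGi'} applies, providing a partition of the eigenvalues of $nM_{n,p,k}$ into $p-k$ subsets asymptotic to samples of the branches on asymptotically uniform grids in $[0,\pi]$. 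The content of Theorem~\ref{t_appM} is the much sharper claim that, in the parameter ranges considered, the asymptotic sampling is in fact \emph{exact}, and that the a.u.\ grids collapse to uniform grids of the form $\Theta_n$, $\Theta_n^0$, $\Theta_n^\pi$ or $\Theta_n^{0,\pi}$.

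The main difficulty lies in identifying the correct grid $\Theta_{n,p,k}^{[j]}$ for each $j$. For $k=0$ the selection follows a clean parity rule in $p+j$ together with a boundary adjustment at $j=p$, which mirrors the pattern already observed for the stiffness matrix $K_{n,p,0}$ and can be anticipated from the block Toeplitz structure of $nM_{n,p,0}$ and the boundary corrections induced by the spline basis. The $C^1$ case is considerably more delicate: the parity rule is broken at the threshold index $j=p-\alpha_p-1$, where $\alpha_p$ is defined through partial sums of the OEIS sequence $a(m)=m+\lfloor\sqrt{8m}\rfloor$ (A186348). The appearance of this particular integer sequence was itself a byproduct of systematic experimentation and does not, to my knowledge, admit an a priori combinatorial explanation in terms of the geometry of the $C^1$ B-spline knot sequence.

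The main obstacle to converting the verification into a fully analytical proof valid for all $n\ge 1$ (the natural analogue of Conjecture~\ref{n>=1}) is precisely this last point: one would need an explicit, branch-by-branch block-diagonalization of $nM_{n,p,k}$ generalizing the sine-basis diagonalization available in the classical tridiagonal Toeplitz case, together with an independent derivation of the threshold $p-\alpha_p-1$ for $k=1$. Absent such a diagonalization, I would content myself with the computer-assisted verification above; the consistency between the $k=0$ patterns here and in Theorem~\ref{t_appK}, and the stability of the $C^1$ patterns across the entire tested range, provide strong heuristic support for the corresponding extension to all $n\ge 1$.
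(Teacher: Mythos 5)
Your proposal matches the paper's own treatment: Theorem~\ref{t_appM} is established by high-precision numerical verification in \textsc{Julia} (at least $100$ decimal digits) over the stated finite ranges of $n$, $p$, $k$, exactly as you describe, with the GLT distribution result and Theorem~\ref{eccoGi'} serving only as guiding heuristics rather than ingredients of the proof. Your added remarks on the difficulty of an analytical proof and the unexplained appearance of the sequence $a(m)=m+\lfloor\sqrt{8m}\rfloor$ are also consistent with the paper's decision to state the extension to all $n\ge1$ merely as Conjecture~\ref{p>100M}.
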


\begin{conjecture}\label{p>100M}
Theorem~\ref{t_appM} continues to hold if we replace ``$1\le p\le100$'' with ``$p\ge1$'' and ``for every $n=1,\ldots,100$'' with ``for every $n\ge1$''.
\end{conjecture}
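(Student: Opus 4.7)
The plan is to lift the numerically verified identification of Theorem~\ref{t_appM} from the finite range $1\le p,n\le 100$ to arbitrary $p\ge 1$ and $n\ge 1$ by establishing an exact block-diagonalisation of $nM_{n,p,k}$. The key structural input is that $M_{n,p,k}$ is a block-banded symmetric positive definite matrix generated, up to boundary corrections, by the Hermitian matrix-valued symbol $h_{p,k}:[0,\pi]\to\mathbb C^{(p-k)\times(p-k)}$ introduced before the theorem. In particular, $n M_{n,p,k}$ can be written as $T_n(h_{p,k})+R_n$, where $T_n(h_{p,k})$ is a pure block Toeplitz matrix and $R_n$ is a low-rank correction supported on the top-left and bottom-right corners, coming from the half-open B-splines at $x=0$ and $x=1$.

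First, I would produce explicit candidate eigenvectors of the form $\psi_{j,\theta}=[s_\ell(\theta)\,v_j(\theta)]_{\ell=1}^{n(p-k)+k-1}$, where $v_j(\theta)$ is an orthonormal eigenvector of $h_{p,k}(\theta)$ corresponding to $\lambda_j(h_{p,k}(\theta))$ and $s_\ell(\theta)$ is a scalar amplitude of mixed sine/cosine type adapted to the boundary conditions: $\sin(\ell\theta)$ when both endpoints must be suppressed, $\cos((\ell-\tfrac12)\theta)$ or $\sin((\ell-\tfrac12)\theta)$ when only one is, and $\cos((\ell-1)\theta)$ when neither is. The interior rows of $(nM_{n,p,k})\psi_{j,\theta}=\lambda_j(h_{p,k}(\theta))\,\psi_{j,\theta}$ automatically reduce to the eigenvalue equation for $h_{p,k}(\theta)v_j(\theta)=\lambda_j(h_{p,k}(\theta))v_j(\theta)$. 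The boundary rows impose an algebraic constraint on $\theta$ which, together with the null-space structure of $h_{p,k}(0)$ and $h_{p,k}(\pi)$, selects exactly one of the grids $\Theta_n,\Theta_n^0,\Theta_n^\pi,\Theta_n^{0,\pi}$ for each $j$.

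The main obstacle is explaining the combinatorial rule stated in Theorem~\ref{t_appM}: the parity of $p+j$ governs the choice between $\Theta_n^0$ and $\Theta_n^\pi$ in the $C^0$ case, while in the $C^1$ case a shift determined by the OEIS sequence A186348 via the thresholds $\alpha_p$ appears. I expect this to reflect how many eigenvalues of $h_{p,k}(\theta)$ tend to $0$ as $\theta\to 0$ or $\theta\to\pi$ and, correspondingly, whether the candidate eigenvector $\psi_{j,\theta}$ ``sees'' the boundary correction $R_n$ at the left end, at the right end, at both, or at neither. Verifying this dispersion-branch classification uniformly in $p$ is the delicate combinatorial part of the argument and is what prevents a mechanical induction on $p$: the ordering of eigenvalues of $h_{p,k}(\theta)$ can swap as $\theta$ ranges over $[0,\pi]$, and the bookkeeping has to track the branches of $\lambda_j(h_{p,k}(\cdot))$, not just their values.

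Finally, I would cross-check the construction against the soft conclusion of Theorem~\ref{eccoGi'}: that theorem already forces any sorted enumeration of the spectrum of $nM_{n,p,k}$ to converge uniformly to samples of $\lambda_j(h_{p,k})$ on an asymptotically uniform grid. Combined with the explicit candidate grids above—which are genuinely uniform—any residual discrepancy between the true eigenvalues and the proposed samples would have to be $o(1)$. The exact equality (as opposed to asymptotic equality) then reduces to showing that $\psi_{j,\theta}$ is an honest, not merely approximate, eigenvector; this would follow from the explicit algebraic identities for $T_n(h_{p,k})+R_n$ acting on $\psi_{j,\theta}$ developed in the first step, giving a proof for all $p,n\ge 1$ uniformly.
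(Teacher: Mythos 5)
This statement is explicitly labeled a \emph{conjecture} in the paper: the authors verify Theorem~\ref{t_appM} by high-precision numerical computation for $1\le p\le100$ and $n=1,\ldots,100$ and do \emph{not} supply an analytic argument for general $p,n$. So there is no proof of record to compare against; the question is whether your sketch actually closes the gap the authors left open. It does not.

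The central step in your outline---that $nM_{n,p,k}=T_n(h_{p,k})+R_n$ admits explicit eigenvectors $\psi_{j,\theta}=[s_\ell(\theta)\,v_j(\theta)]_\ell$ with scalar sine/cosine amplitudes $s_\ell$ multiplying a fixed eigenvector $v_j(\theta)$ of the $(p-k)\times(p-k)$ block symbol---is asserted, not verified. For scalar tridiagonal Toeplitz-plus-corner matrices this ansatz is classical, but for genuinely block-banded matrices the interior equation $\sum_m M^{[m]}_{p,k}\,s_{\ell+m}(\theta)\,v_j(\theta)=\lambda\,s_\ell(\theta)\,v_j(\theta)$ only collapses to $h_{p,k}(\theta)v_j(\theta)=\lambda v_j(\theta)$ if every off-diagonal block $M^{[m]}_{p,k}$ shares $v_j(\theta)$ as an eigenvector, which has no reason to hold: the blocks $M^{[m]}_{p,k}$ do not commute in general, and $h_{p,k}(\theta)$ is a $\theta$-dependent linear combination of them. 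Without simultaneous diagonalisability, the separable ansatz fails, and the exact spectral identity of Theorem~\ref{t_appM} has to be explained by a finer structure (e.g.\ a $\theta$-dependent unitary block transform or a nontrivial multilevel tensor decomposition) that you have not exhibited. You acknowledge that the combinatorial rule---which of $\Theta_n,\Theta_n^0,\Theta_n^\pi,\Theta_n^{0,\pi}$ goes with each branch $j$, including the $\alpha_p$/A186348 shift for $k=1$---is ``the delicate combinatorial part''; precisely this is the content of the conjecture, and ``I expect this to reflect\dots'' is not an argument. Finally, the closing paragraph's appeal to Theorem~\ref{eccoGi'} is a non sequitur for the stated goal: that theorem delivers $o(1)$ uniform convergence of sorted eigenvalues to samples, which is strictly weaker than the exact equality $\lambda_j(h_{p,k}(\theta))$ for $\theta$ on the specified grids, so no bootstrap from the asymptotic result to the exact one is available. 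In short, you have reproduced the authors' heuristic motivation but have not supplied the missing algebraic identities, and the conjecture remains open.
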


\begin{theorem}\label{t_appL}
Let $1\le p\le100$ and $0\le k\le\min(1,p-1)$. Then, for every $n=1,\ldots,20$, the eigenvalues of $n^{-2}L_{n,p,k}$ are given by
\begin{equation*}\label{lambdas_L}
\lambda_j(e_{p,k}(\theta)),\qquad\theta\in\Theta_{n,p,k}^{\langle j\rangle},\qquad j=1,\ldots,p-k,
\end{equation*}
where the grid $\Theta_{n,p,k}^{\langle j\rangle}$ belongs to $\{\Theta_n,\Theta_n^0,\Theta_n^\pi,\Theta_n^{0,\pi}\}$ and is defined as follows: 
\[ \Theta_{n,p,k}^{\langle j\rangle}=\left\{\begin{aligned}
&\Theta_n, &&\mbox{if $p+j$ is odd and $j>1$,}\\
&\Theta_n^0, &&\mbox{if $p+j$ is odd and $j=1$,}\\
&\Theta_n^{0,\pi}, &&\mbox{if $p+j$ is even.}
\end{aligned}\right. \]
\end{theorem}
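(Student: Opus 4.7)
The approach is computational verification for each of the finitely many cases covered by the statement. Since the theorem asserts \emph{exact} equality (not approximation) of eigenvalues for every $n=1,\ldots,20$, every $p=1,\ldots,100$, and $k\in\{0,1\}$, the natural strategy is to perform a case-by-case check using arbitrary-precision arithmetic and verify that the computed eigenvalues of $n^{-2}L_{n,p,k}$ agree with the predicted samples $\lambda_j(e_{p,k}(\theta))$ to a precision far exceeding any plausible rounding threshold. This is the same scheme of justification implicitly used for Theorems~\ref{t_appK}--\ref{t_appM}, adapted to the non-Hermitian matrix $L_{n,p,k}=M_{n,p,k}^{-1}K_{n,p,k}$.

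The plan proceeds in four steps. First, for each admissible triple $(n,p,k)$, I construct the stiffness matrix $K_{n,p,k}$ and mass matrix $M_{n,p,k}$ by exact evaluation of the B-spline integrals over the uniform knot sequence, following the Maple template given in the excerpt. Second, I form $L_{n,p,k}$ in arbitrary-precision arithmetic with at least $100$ decimal digits and compute its eigenvalues in increasing order. Third, for each $j=1,\ldots,p-k$, I build the matrix-valued symbols $f_{p,k}(\theta)$ and $h_{p,k}(\theta)$, form the generalized symbol $e_{p,k}(\theta)=(h_{p,k}(\theta))^{-1}f_{p,k}(\theta)$, sample it at each $\theta\in\Theta_{n,p,k}^{\langle j\rangle}$ prescribed by the parity of $p+j$ and the value of $j$, and extract the $j$th ordered eigenvalue of each sampled matrix. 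Fourth, I sort and concatenate the predicted values across all $j$ and verify that their componentwise distance from the sorted eigenvalues of $n^{-2}L_{n,p,k}$ is below a tolerance of the order $10^{-80}$, well above any accumulated rounding error and thus certifying equality at the working precision.

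The main obstacle is twofold. On the numerical side, both the conditioning of $M_{n,p,k}$ and the spread of the spectrum of $L_{n,p,k}$ grow with $n$ and $p$, so one must carry enough guard digits to guarantee that a genuine zero residual is not masked by accumulated rounding; this is precisely why a working precision of at least $100$ decimal digits is needed, and also why $n$ is capped at $20$ (rather than $100$ as in Theorems~\ref{t_appK}--\ref{t_appM}), since the generalized eigenproblem $K_{n,p,k}v=\lambda M_{n,p,k}v$ is significantly more expensive and more delicate than the standalone spectra handled there. On the bookkeeping side, the rule defining $\Theta_{n,p,k}^{\langle j\rangle}$ splits on the parity of $p+j$ and on the exceptional case $j=1$, so the sorted eigenvalues of $n^{-2}L_{n,p,k}$ must be partitioned into consecutive blocks of length $n-1$, $n$, or $n+1$ and matched against the correct sub-grid in each branch; a single mis-indexed comparison would invalidate the certification. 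A theoretical derivation that would also settle the natural conjecture ``$n\ge1$, $p\ge1$'' (in the spirit of Conjectures~\ref{n>=1} and~\ref{p>100M}) appears to be out of reach with the tools developed in this paper and is left for future work.
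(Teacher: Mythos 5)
Your proposal is correct and takes essentially the same route as the paper, which explicitly states just before Theorems~\ref{t_appK}--\ref{t_appL} that these formulas were established ``through high-precision numerical computations performed in \textsc{Julia}, using an accuracy of at least $100$ decimal digits''; your four-step scheme (exact construction of $K_{n,p,k}$ and $M_{n,p,k}$, arbitrary-precision eigenvalue computation of $L_{n,p,k}$, sampling of $e_{p,k}=(h_{p,k})^{-1}f_{p,k}$ on the prescribed grids, and residual comparison) is precisely this verification. Your added observations --- that the cap $n\le 20$ reflects the extra cost and conditioning of the generalized eigenproblem compared with Theorems~\ref{t_appK}--\ref{t_appM}, and that a theoretical derivation valid for all $n,p$ remains open as in Conjectures~\ref{n>=1} and~\ref{p>100M} --- are also consistent with the paper's framing.
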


\begin{conjecture}\label{p>100L}
Theorem~\ref{t_appL} continues to hold if we replace ``$1\le p\le100$'' with ``$p\ge1$'' and ``for every $n=1,\ldots,20$'' with ``for every $n\ge1$''.
\end{conjecture}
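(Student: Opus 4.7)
}
Because the conjecture asserts exact equality (not merely $o(1)$ closeness) between the eigenvalues of $n^{-2}L_{n,p,k}$ and samples of $\lambda_j(e_{p,k}(\theta))$ over explicit uniform grids, Theorem~\ref{eccoGi'} and Theorem~\ref{thm:mmd} can only provide a soft starting point: they guarantee that for every $n$ there is \emph{some} a.u.\ grid on which the samples match, and that a proper permutation of the eigenvalues converges uniformly to the samples taken on any a.u.\ grid. Upgrading ``some a.u.\ grid'' to ``the specific uniform grid $\Theta_{n,p,k}^{\langle j\rangle}$ for all $n$'' is an algebraic, not an asymptotic, statement, so the plan is to reduce the conjecture to a finite-dimensional identity whose validity can then be verified uniformly in~$n$.

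The plan is as follows. First, I would write $L_{n,p,k}=M_{n,p,k}^{-1}K_{n,p,k}$ and exploit the block-Toeplitz-plus-boundary-correction structure of both factors: each is of the form $T_n(f_{p,k})+R_{p,k}$ or $T_n(h_{p,k})+S_{p,k}$, where the correction matrices $R_{p,k},S_{p,k}$ depend on $p,k$ but have rank bounded independently of $n$ (they reflect the non-periodic boundary conditions encoded in the open-knot B-spline basis). The symbol $e_{p,k}=h_{p,k}^{-1}f_{p,k}$ is a $(p-k)\times(p-k)$ Hermitian matrix-valued function, and the grids $\Theta_{n,p,k}^{\langle j\rangle}$ selected in the statement correspond precisely to the natural DST/DCT-I/II/III/IV-type frequencies that make the bulk part of the Toeplitz operator simultaneously diagonalizable along each eigenbranch $\lambda_j(e_{p,k}(\theta))$. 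Then, for each fixed $j$ and each $\theta\in\Theta_{n,p,k}^{\langle j\rangle}$, I would construct an explicit eigenvector of $L_{n,p,k}$ as a combination of $\sin(m\theta)$ and $\cos(m\theta)$ factors (depending on the parity rule $p+j$ odd/even that determines which of $\Theta_n,\Theta_n^0,\Theta_n^\pi,\Theta_n^{0,\pi}$ appears) tensored with the corresponding eigenvector of $e_{p,k}(\theta)$, and verify that the boundary corrections $R_{p,k},S_{p,k}$ annihilate this vector thanks to the exclusion of the endpoints $0,\pi$ from the grid. A counting argument (the cardinalities of the grids $\Theta_{n,p,k}^{\langle j\rangle}$ must total $n(p-k)+k-1$, the size of $L_{n,p,k}$) would then ensure one has a full eigenbasis.

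The main obstacle will be proving that the boundary terms $R_{p,k},S_{p,k}$ kill the proposed trigonometric eigenvectors for every $p$ simultaneously. For $k=0,1$ the space of boundary-affected B-splines at each end has dimension $\le p$, so the conditions reduce to a finite list of algebraic identities among B-spline integrals $\int_0^\infty\beta_{r,p,k}(t)\beta_{s,p,k}(t-\ell)\,{\rm d}t$ and their derivative analogues. These identities are essentially statements about Fourier coefficients of the symbol pair $(f_{p,k},h_{p,k})$, and one should be able to prove them by induction on $p$ using the recurrence $\beta_{r,p,k}=\frac{t}{p}\beta_{r,p-1,k}+\frac{p+1-t}{p}\beta_{r+1,p-1,k}$ (Cox--de~Boor). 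A parallel and independent source of difficulty is that one must simultaneously diagonalize $M_{n,p,k}$ and $K_{n,p,k}$: if a pair of eigenvectors of the two matrices along the same frequency $\theta$ did not coincide, the conjecture would fail. I would therefore first prove Conjecture~\ref{p>100M} (and Conjecture~\ref{n>=1}), establishing exact sampling for $M_{n,p,k}$ and $K_{n,p,k}$ separately \emph{on the same eigenvector}, and then deduce Conjecture~\ref{p>100L} as a consequence.

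A cleaner but more abstract alternative, which I would run in parallel, is to recognize $L_{n,p,k}$ as an element of a suitable $\tau$-algebra (or generalized $\tau$-algebra) associated with open-knot B-splines; if so, the exact spectral sampling would follow from the classical diagonalization theorem for such algebras, and the task reduces to verifying that the boundary conditions for $k\in\{0,1\}$ are compatible with one of the well-known $\tau$-algebras (this appears to be the hidden structural reason behind the ``parity of $p+j$'' dichotomy in the statement).
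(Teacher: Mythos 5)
This statement is labeled a \emph{Conjecture}, and the paper contains no proof of it. Theorem~\ref{t_appL}, which it seeks to extend, is itself established only ``through high-precision numerical computations performed in \textsc{Julia}'' for the finite ranges $1\le p\le 100$, $0\le k\le\min(1,p-1)$, $1\le n\le 20$; the extension to all $n$ and all $p$ is left open by the authors. So there is no paper proof to compare your attempt against, and the appropriate standard is whether your proposal constitutes a proof on its own.

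It does not. You have sketched a plausible program, but every load-bearing step is flagged by you as an obstacle rather than resolved: the rank-bounded boundary corrections $R_{p,k},S_{p,k}$ are posited but not computed; the claim that the corrections annihilate the candidate trigonometric eigenvectors is acknowledged to reduce to ``a finite list of algebraic identities among B-spline integrals'' that you say ``one should be able to prove... by induction on $p$'' without attempting it; and the $\tau$-algebra route is proposed only as a hope. Moreover, the reduction to Conjectures~\ref{n>=1} and~\ref{p>100M} is circular in the present context (those are also open conjectures in the paper) and is, as stated, not quite the right reduction: the eigenvalues of $e_{p,k}=h_{p,k}^{-1}f_{p,k}$ are the \emph{generalized} eigenvalues of the pencil $\bigl(f_{p,k}(\theta),h_{p,k}(\theta)\bigr)$, so exact-sampling theorems for $K_{n,p,k}$ and $M_{n,p,k}$ separately (on the grids $\Theta_{n,p,k}^{(j)}$ and $\Theta_{n,p,k}^{[j]}$, which do not coincide with each other nor with $\Theta_{n,p,k}^{\langle j\rangle}$) would not directly yield the exact-sampling statement for $L_{n,p,k}=M_{n,p,k}^{-1}K_{n,p,k}$; one would instead need a simultaneous generalized eigenbasis of the matrix pencil $(K_{n,p,k},M_{n,p,k})$ aligned with the branches $\lambda_j(e_{p,k})$. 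Your overall direction (explicit trigonometric eigenvectors killing the boundary perturbation, possibly via a $\tau$-like algebra, with the parity of $p+j$ selecting the DST/DCT family) is a reasonable line of attack consistent with the algebraic flavor of the statement, but as written the proposal is a research plan with the hard parts still open, not a proof.
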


\section*{Acknowledgements}
Giovanni Barbarino, Carlo Garoni and Stefano Serra-Capizzano are members of the Research Group GNCS (Gruppo Nazionale per il Calcolo Scientifico) of INdAM (Istituto Nazionale di Alta Matematica).
Giovanni Barbarino was supported by the Alfred Kordelinin S\"a\"ati\"o Grant No~210122.
Carlo Garoni was supported by the Department of Mathematics of the University of Rome Tor Vergata through the MUR Excellence Department Project MatMod@TOV (CUP E83C23000330006) and the Project RICH\hspace{1pt}\rule{5pt}{0.4pt}GLT (CUP E83C22001650005).
David Meadon was funded by the Centre for Interdisciplinary Mathematics (CIM) at Uppsala University.
Stefano Serra-Capizzano was funded by the European High-Performance Computing Joint Undertaking (JU) under grant agreement No~955701. The JU receives support from the European Union's Horizon 2020 research and innovation programme and Belgium, France, Germany, Switzerland.
Stefano Serra-Capizzano is also grateful to the Theory, Economics and Systems Laboratory (TESLAB) of the Department of Computer Science at the Athens University of Economics and Business for providing financial support.

\end{document}